\newif\ifShowMarginPar
\newcommand{\Z}{\mathbb Z}
\newcommand{\N}{\mathbb N}
\newcommand{\R}{\mathbb R}
\newcommand{\Zz}{{\mathbb Z}^2}
\renewcommand{\P}{\mathbb P}
\newcommand{\pn}{\pi^{(n)}}
\newcommand{\E}{\mathbb E}
\newcommand{\V}{\mathbb{V}\mathrm{ar}}
\newcommand{\bu}{\mathbf{u}}
\newcommand{\bv}{\mathbf{v}}
\newcommand{\bz}{\mathbf{z}}
\newcommand{\bw}{\mathbf{w}}
\newcommand{\bx}{\mathbf{x}}
\newcommand{\by}{\mathbf{y}}
\newcommand{\prob}{\xrightarrow{\mathbb{P}}}
\newcommand{\disteq}{\overset{d}{=}}
\newcommand{\n}{n_{\mathbf{u},\mathbf{v}}}
\newcommand{\h}{h^{\mathtt{IND}}}
\newcommand{\T}{\mathtt{T}^{\mathtt{IND}}}
\newcommand{\hh}{\bar{h}^{\mathtt{IND}}}
\newtheorem{theorem}{Theorem}[section]
\newtheorem{definition}[theorem]{Definition}
\newtheorem{lemma}[theorem]{Lemma}
\newtheorem{proposition}[theorem]{Proposition}
\newtheorem{remark}[theorem]{Remark}
\newenvironment{proof}[1][Proof]{\noindent\textbf{#1.} }{\ \rule{0.5em}{0.5em}}
\begin{document}

\title{A drainage network with dependence and the Brownian web}
\author{Azadeh Parvaneh\thanks{%
Postal address: Department of Statistics, Faculty of Mathematics and Statistics, University of Isfahan, Isfahan 81746-73441,
Iran.  Email: azadeh.parvaneh@sci.ui.ac.ir }~~~~~
Afshin Parvardeh\thanks{%
Postal address: Department of Statistics, Faculty of Mathematics and Statistics, University of Isfahan, Isfahan 81746-73441,
Iran. Email: a.parvardeh@sci.ui.ac.ir }~~~~~
Rahul Roy\thanks{%
Postal address: Indian Statistical Institute, 7 S. J. S. Sansanwal Marg, New Delhi 110016, India.
Email: rahul@isid.ac.in }}
\maketitle
\date{}

\begin{abstract}
\noindent We study a system of coalescing  random walks on the integer lattice $ \Z^{d} $ in which the walk is oriented in the $d$-th direction and follows certain specified rules. We first study the geometry of the paths and show that,  almost surely, the paths form a graph consisting of just one tree for dimensions $ d=2,3 $ and infinitely many disjoint trees for dimensions $ d\geq 4 $. Also, there is no bi-infinite path in the graph almost surely for $ d\geq 2 $. Subsequently, we prove that for $ d=2 $ the diffusive scaling of this system converges in distribution to the Brownian web.
\end{abstract}

\noindent \emph{Keywords}: Random Graph, Random Walk, Markov Chain, Scaling Limit, Brownian Web.

\section{Introduction}
Random directed spanning trees and their scaling limits have attracted lots of attention in recent years. Interest in these graphs arose because of their connection to drainage networks in geology (see Rodriguez-Iturbe and Rinaldo \cite{Rodriguez}). For graphs with the vertex set on the integer lattice, the Howard's model of drainage networks was studied by Gangopadhyay \textit{et al.} \cite{Gangopadhyay},  a  more general model was studied by Coletti and Valle \cite{Coletti2014} and a ``torch-light'' model was studied by Athreya \textit{et al.} \cite{Athreya}.  Ferrari \textit{et al.} \cite{Ferrari}
and Coupier and Tran \cite{Coupier0} studied models with vertices being points of a Poisson point process on the continuum space. These models besides being of intrinsic interest are also related to the Brownian web.

Athreya \textit{et al.} \cite{Athreya} considered a drainage network in which a source of water is connected to the nearest (in terms of the minimality of last coordinate) source lying in a $ 45^{\circ} $ light cone generating from it. We first describe the model studied by Athreya  \textit{et al.}  \cite{Athreya}. Consider the integer lattice $ \Z^{d} $ with $ d\geq 2 $. For $ \mathbf{u}=(\bu(1),\ldots,\bu(d))\in \mathbb{Z}^{d} $ and $ k,h\geq 1 $, let
\begin{itemize}
\item[(i)] $ m_{k}(\bu)=(\bu(1),\ldots,\bu(d-1),\bu(d)+k) $;
\item[(ii)] $H(\mathbf{u},k)=\big\{\mathbf{v}\in \mathbb{Z}^{d}:\bv(d)=\bu(d)+k\text{~and~}\Vert \bv-m_{k}(\bu)\Vert_{1}\leq k\big\}$ and as a convention $ H(\mathbf{u},0)=\emptyset $ (see Figure \ref{1});
\item[(iii)] $ V (\mathbf{u},h)=\bigcup_{k=1}^{h}H(\mathbf{u},k) $ and as a convention $ V (\mathbf{u},0)=\emptyset $ (see Figure \ref{1});
\item[(iv)] $ V(\bu)=\bigcup_{h=1}^{\infty}V(\mathbf{u},h) $.
\end{itemize}
Here and henceforth $ \Vert .\Vert_1 $ denotes the $ \mathcal{L}_1 $ norm on $ \R^{d} $.

\vspace*{3mm}
\begin{figure}[htb]
\centering
\scalebox{1.2}  
{
\begin{pspicture}(0,-6.1710095)(5.075962,-3.153029)
\definecolor{colour0}{rgb}{0.5019608,0.5019608,0.5019608}
\psdots[linecolor=black, dotsize=0.22](1.737981,-4.89101)
\psdots[linecolor=black, dotsize=0.22](2.537981,-4.89101)
\psdots[linecolor=black, dotsize=0.22](3.337981,-4.89101)
\psdots[linecolor=black, dotsize=0.22](0.93798095,-4.0910096)
\psdots[linecolor=black, dotsize=0.22](1.737981,-4.0910096)
\psdots[linecolor=black, dotsize=0.22](2.537981,-4.0910096)
\psdots[linecolor=black, dotsize=0.22](3.337981,-4.0910096)
\psdots[linecolor=black, dotsize=0.22](4.137981,-4.0910096)
\psdots[linecolor=black, dotsize=0.22](4.937981,-3.2910097)
\psdots[linecolor=black, dotsize=0.22](4.137981,-3.2910097)
\psdots[linecolor=black, dotsize=0.22](3.337981,-3.2910097)
\psdots[linecolor=black, dotsize=0.22](2.537981,-3.2910097)
\psdots[linecolor=black, dotsize=0.22](1.737981,-3.2910097)
\psdots[linecolor=black, dotsize=0.22](0.93798095,-3.2910097)
\psdots[linecolor=black, dotsize=0.22](0.13798095,-3.2910097)
\psdots[linecolor=black, dotsize=0.22](2.537981,-5.6910095)
\psdots[linecolor=black, dotstyle=o, dotsize=0.2, fillcolor=white](3.337981,-4.89101)
\psdots[linecolor=black, dotstyle=o, dotsize=0.2, fillcolor=white](2.537981,-4.89101)
\psdots[linecolor=black, dotstyle=o, dotsize=0.2, fillcolor=white](1.737981,-4.89101)
\psdots[linecolor=black, dotstyle=o, dotsize=0.2, fillcolor=white](0.93798095,-4.0910096)
\psdots[linecolor=black, dotstyle=o, dotsize=0.2, fillcolor=white](1.737981,-4.0910096)
\psdots[linecolor=black, dotstyle=o, dotsize=0.2, fillcolor=white](2.537981,-4.0910096)
\psdots[linecolor=black, dotstyle=o, dotsize=0.2, fillcolor=white](3.337981,-4.0910096)
\psdots[linecolor=black, dotstyle=o, dotsize=0.2, fillcolor=white](4.137981,-4.0910096)
\psdots[linecolor=black, dotstyle=o, dotsize=0.2, fillcolor=colour0](4.937981,-3.2910097)
\psdots[linecolor=black, dotstyle=o, dotsize=0.2, fillcolor=colour0](4.137981,-3.2910097)
\psdots[linecolor=black, dotstyle=o, dotsize=0.2, fillcolor=colour0](3.337981,-3.2910097)
\psdots[linecolor=black, dotstyle=o, dotsize=0.2, fillcolor=colour0](2.537981,-3.2910097)
\psdots[linecolor=black, dotstyle=o, dotsize=0.2, fillcolor=colour0](1.737981,-3.2910097)
\psdots[linecolor=black, dotstyle=o, dotsize=0.2, fillcolor=colour0](0.93798095,-3.2910097)
\psdots[linecolor=black, dotstyle=o, dotsize=0.2, fillcolor=colour0](0.13798095,-3.2910097)
\psdots[linecolor=black, dotstyle=o, dotsize=0.2, fillcolor=white](2.537981,-5.6910095)
\rput[bl](2.417981,-5.99){{\scriptsize $ \bu $}}
\end{pspicture}
}
\caption{\label{1}\textit{The shaded points at the top constitute $ H(\bu , 3) $ and the
 fifteen vertices above $ \bu\in\Zz $ form the region $ V (\bu ,3) $.}}
\end{figure}

Now, let $ \{U_{\bv}:\bv\in\Z^{d}\} $ and $ \{U_{\bv,\bw}:\bv,\bw\in\Z^{d}, \bw\in V(\bv)\} $ be two independent collections of i.i.d. uniform $ (0,1) $ random variables.  Fix $ 0<p<1 $, and consider $ \mathscr{V}:=\{\bw\in\Z^{d}:U_{\bw}<p\} $ as the set of all open vertices. Athreya \textit{et al.} \cite{Athreya}  have studied  a random graph with vertex set $ \mathscr{V} $ and the edge set $ \{\langle\bu ,\tilde{\bu}\rangle:\bu\in\mathscr{V}\} $, where $ \langle\bu ,\tilde{\bu}\rangle $ denotes the edge represented by a straight line joining $ \bu $ and $\tilde{\bu} $ and 
 for each $ \bu\in\Z^{d} $,  $  \tilde{\bu}\in\mathscr{V} $ is such that for some $ l\geq 1 $, 
\begin{itemize}
\item[(i)] $  \tilde{\bu}\in V(\bu,l) $;
\item[(ii)] $ V(\bu,l-1)\cap \mathscr{V}=\emptyset $;
\item[(iii)] $ U_{\bu, \tilde{\bu}}\leq U_{\bu,\bw}~\text{for all } \bw\in H(\bu,l)\cap\mathscr{V} $.
\end{itemize}

 We consider a variation of the above model. The modification is that instead of the edges being obtained via the uniform random variables $ U_{\bv ,\bw} $, we impose a structure on $ \Z^{d} $ obtained from a collection of i.i.d. uniform random variables which determine both the vertices and the edges of the random graph simultaneously.
In particular, for $\bu\in\Z^{d}$ let $\tilde{\bu}\in\mathscr{V}$ be such that for some  $l\geq 1$,
\begin{align}
\label{new1}
\text{(i)} & \quad  \tilde{\bu}\in V(\bu,l), \nonumber\\
\text{(ii)} & \quad V(\bu,l-1)\cap \mathscr{V}=\emptyset ,  \nonumber\\
\text{(iii)} & \quad U_{ \tilde{\bu}}\leq U_{\bw}~\text{for all } \bw\in H(\bu,l) \cap\mathscr{V} .
\end{align}
Note $ \P\{U_{\bv}\neq U_{\bw}\}=1 $ for all $ \bv\neq \bw $. So $  \tilde{\bu} $ is unique almost surely  and is a function of both $ \bu $ and $ \{U_{\bw}:\bw\in V(\bu)\} $. We study the random graph $ \mathscr{G}=(\mathscr{V},\mathscr{E}) $ with $ \mathscr{E}:=\{\langle\bu ,\tilde{\bu}\rangle:\bu\in\mathscr{V}\} $, where $ \tilde{\bu} $ is obtained by the second mechanism described above. Note this latter graph $ \mathscr{G} $ differs from that studied by Athreya \textit{et al.} \cite{Athreya}  for all dimensions $ d\geq 2 $. In particular, for two-dimensions while the original graph could have edges crossing each other, the variant we study does not allow that and as such is a planar graph.  Indeed, suppose there exist two distinct edges $ \langle\bx,\by\rangle $ and $ \langle\bz,\bw\rangle $ which cross each other (note that $\bx(2) < \by(2)$ and $\bz(2) < \bw(2)$). Since the two edges cross each other, we must have  $ \{\by ,\bw\}\subseteq V(\bx)\cap V(\bz)  $.
Thus, $\by(2) = \bw(2)$ and
$\tilde{\bx}=\tilde{\bz}= \left\{
\begin{array}{ll}
\by & \text{if }U_{\by}\leq U_{\bw} \\
\bw & \text{if }U_{\bw}<U_{\by}
\end{array} \right.
$, where $ \tilde{\bx} $ and $ \tilde{\bz} $ are as \eqref{new1}.

Akin to the result of Gangopadhyay \textit{et al.} \cite{Gangopadhyay}, and that of Athreya \textit{et al.} \cite{Athreya}, we have
\begin{theorem}\label{tree}
The random graph $ \mathscr{G} $ on $ \Z^{d} $,
\begin{description}
\item[(i)] for $ d=2,3 $,  is connected  and consists of a single tree almost surely.
\item[(ii)] for $ d\geq 4 $, is disconnected  and consists of infinitely many disjoint  trees almost surely.
\item[(iii)] for $ d\geq 2 $, contains no bi-infinite path almost surely.
\end{description}
\end{theorem}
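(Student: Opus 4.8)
Since $\tilde\bu$ is a.s.\ unique, every open vertex has out‑degree exactly one and every edge strictly increases the $d$‑th coordinate; hence $\mathscr{G}$ is acyclic, so it is a forest, and two open vertices lie in the same component iff their forward paths $\bu,\tilde\bu,\tilde{\tilde\bu},\dots$ eventually agree. I would first record that every forward path is a.s.\ infinite: it can terminate at $\bw$ only if $V(\bw)\cap\mathscr{V}=\emptyset$, which has probability $\le\prod_{k\ge1}(1-p)^{|H(\bw,k)|}=0$, so a countable union over starting vertices handles all of them; thus each component is an infinite tree with a single end ``at $+\infty$''. It then suffices to prove: for (i), that any two open vertices a.s.\ have coalescing forward paths; for (ii), that with positive probability two well‑separated open vertices do not, plus a counting argument; and for (iii), that a.s.\ every vertex has only finitely many \emph{ancestors} (a vertex whose forward path passes through it), since the backward half of a bi‑infinite path would exhibit infinitely many ancestors of any of its vertices.

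\emph{Parts (i) and (ii).} Given two open vertices I would run their forward paths jointly, always advancing the path currently at the lower $d$‑level so the two stay on common levels, and follow $\Delta_n\in\Z^{d-1}$, the difference of their first $d-1$ coordinates after $n$ joint steps. Using the i.i.d.\ structure, $(\Delta_n)$ is a Markov chain whose increments are i.i.d.\ as long as $\|\Delta_n\|$ exceeds a fixed $r_0$ (so the two current cones are disjoint), mean zero by the reflection symmetry $\bx\mapsto-\bx$ of the horizontal coordinates, and have finite variance — indeed all moments, since the vertical jump $l$ obeys $\P(\text{jump}\ge l)\le(1-p)^{|V(\cdot,l-1)|}$ with $|V(\cdot,l-1)|\asymp l^{d}$, and the horizontal step is at most $l$. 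So $(\Delta_n)$ behaves like a centred finite‑variance walk on $\Z^{d-1}$ away from $0$; and whenever the two paths sit at a common level within distance $r_0$ there is a probability bounded below by some $\delta(r_0)>0$ that both jump at the next step to the common open vertex of least $U$‑value in the union of their overlapping cones, i.e.\ coalesce. For $d\in\{2,3\}$, $\Z^{d-1}$ is recurrent, so $\{\|\cdot\|\le r_0\}$ is visited infinitely often and coalescence occurs a.s.; hence all vertices lie in one component, which, being a subforest, is a single tree — this is (i). For $d\ge4$, $\Z^{d-1}$ is transient, and for two starting vertices at horizontal distance $R$ the difference avoids $\{\|\cdot\|\le r_0\}$ forever, so the paths never coalesce, with probability $\ge1-\varepsilon(R)$ where $\varepsilon(R)\to0$; thus two far‑apart open vertices lie in distinct trees with positive probability, and since the number $N$ of trees is a translation‑invariant function of the i.i.d.\ field it is a.s.\ constant by ergodicity, hence $N\ge2$ a.s. Applying this to $k$ open vertices pairwise at horizontal distance $\ge R$ and a union bound gives $\P(N\ge k)\ge1-\binom{k}{2}\varepsilon(R)>0$ for $R$ large, so $\P(N\ge k)=1$ for every $k$, i.e.\ $N=\infty$ a.s. — this is (ii).

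\emph{Part (iii).} It suffices to show $\P(\mathbf0\text{ has finitely many ancestors})=1$. The computation I would carry out is that for every $\bv$,
\[
\E\big[\,\#\{\bw\in\mathscr{V}:\tilde\bw=\bv\}\ \big|\ \bv\in\mathscr{V}\,\big]=1 ,
\]
which follows because, with $l:=\bv(d)-\bw(d)$ and $\bv\in H(\bw,l)$, independence over the disjoint sets $\{\bw\}$, $V(\bw,l-1)$, $H(\bw,l)$ gives $\P(\bw\in\mathscr{V},\,\tilde\bw=\bv)=p\,(1-p)^{|V(\bw,l-1)|}\,\tfrac{1-(1-p)^{|H(\bw,l)|}}{|H(\bw,l)|}$, and then summing over the $|H(\bw,l)|$ vertices $\bw$ with $\bv\in H(\bw,l)$ and over $l\ge1$ telescopes, using $|V(\cdot,l-1)|+|H(\cdot,l)|=|V(\cdot,l)|$, to $p$, so the conditional mean is $p/p=1$. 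I would then explore the ancestor set of $\mathbf0$ generation by generation, revealing the uniform variables only as needed, and compare the generation sizes to a Galton--Watson process: by the identity each explored vertex produces on average one child, and produces none with positive probability, so the generation sizes are dominated by a non‑degenerate critical branching process, which dies out almost surely; since each generation has finite mean size hence is a.s.\ finite, $\mathbf0$ has a.s.\ finitely many ancestors, and therefore $\mathscr{G}$ a.s.\ contains no bi‑infinite path.

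\emph{Main obstacle.} For (i)--(ii) the real work is the random‑walk description of $(\Delta_n)$ — genuinely i.i.d.\ increments off the diagonal, mean zero, finite variance, truly $(d-1)$‑dimensional — together with the uniform positive lower bound on the one‑step coalescence probability; for $d=2$ these are exactly the discrete estimates the paper develops on the way to the Brownian web, and their $d=3$ and $d\ge4$ analogues are what is required here. For (iii) the delicate point is that the generation‑by‑generation exploration is not literally a Galton--Watson process: because an edge may skip several levels, the block of uniform variables governing one vertex's offspring can overlap a block already inspected for a vertex of an earlier generation, so the branching comparison must be made only up to such defects, which are controlled by the super‑exponential bound $\P(\text{jump}\ge l)\le(1-p)^{c\,l^{d}}$.
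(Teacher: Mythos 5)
Your high-level skeleton for (i)--(ii) (track the difference of the two forward paths at common levels, recurrence/transience dichotomy in $\Z^{d-1}$, a uniform lower bound on one-step coalescence near the diagonal, then ergodicity and a union bound for $d\ge 4$) is indeed the strategy the paper follows, but the step you treat as routine is exactly where the work lies, and as stated it is wrong: the increments of the difference chain are \emph{not} i.i.d.\ once $\|\Delta_n\|>r_0$. The two paths are driven by the same field $\{U_{\bw}\}$, and between consecutive regeneration times each path explores a region of unbounded (geometrically-tailed) width, so for every separation $\bx$ there is a probability of order $\exp\{-C\Vert\bx\Vert\}$ that the explored regions overlap; hence the one-step law of $\Delta_n$ depends on the current separation at every scale, and you cannot quote Chung--Fuchs recurrence (or transience) for the actual chain. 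This matters most in the marginal case $d=3$, where the difference chain lives on $\Z^2$: an exponentially small perturbation of a mean-zero, finite-variance walk is not automatically recurrent, which is why the paper couples the joint dynamics with a pair of independent paths, bounds the discrepancy of the $k$-th moments by $C\Vert\bx\Vert^{2k}\exp\{-C\Vert\bx\Vert\}$, and then runs the Foster--Lyapunov criterion with the delicate function $f(\bx)=\sqrt{\ln(1+\Vert\bx\Vert_2^2)}$. For $d=2$ the paper avoids the issue altogether via the regeneration-time martingale $Z_l(\bu,\bv)$, which is non-negative by planarity and must be absorbed at $0$; for $d\ge 4$ the transience step again goes through the coupling (Proposition \ref{prop4.3} together with Lemma 3.3 of \cite{Gangopadhyay}). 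So your outline is compatible with the paper's, but the assertion ``genuinely i.i.d.\ increments off the diagonal'' is a genuine gap, not a technicality.

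For (iii) you propose a different route from the paper (which invokes a Burton--Keane argument as in Theorem 2 of \cite{Athreya}). Your mean-one identity $\E[\#\{\bw:\tilde\bw=\bv\}\mid \bv\in\mathscr{V}]=1$ is correct and pleasant, but the deduction fails: the ancestor exploration is not a Galton--Watson process, and a unit-mean offspring rule with \emph{dependent} offspring counts (the candidate parents of different vertices in a generation share cones, and the already-revealed configuration biases the conditional offspring laws) does not imply extinction --- one can have mean one, positive probability of zero children for each individual, and survival forever. You would need an actual stochastic domination by a (sub)critical process, or a second-moment/Lyapunov argument, and the ``defects'' you mention are not obviously controllable by the jump-tail bound alone. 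Also note the slip ``each generation has finite mean size hence is a.s.\ finite, so $\mathbf{0}$ has a.s.\ finitely many ancestors'': finiteness of every generation does not give a finite ancestor set; you need the exploration to die out. As written, part (iii) is therefore not proved; either supply the domination argument rigorously or fall back on the Burton--Keane route the paper uses.
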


Our next result studies the scaling limit of the two-dimensional graph $ \mathscr{G} $.
For $ \bu\in\Z^{ 2} $ and $ k\geq 1 $, define  $ \mathtt{u}_{0}:=\bu $ and $ \mathtt{u}_{1}:=\tilde{\bu} $ where $ \tilde{\bu} $ is as \eqref{new1}. Having obtained $ \mathtt{u}_{k} $, let $ \mathtt{u}_{k+1}:=\tilde{\mathtt{u}}_{k} $.
Starting from a vertex $ \bu\in \Zz  $, the piecewise linear path
$ \pi_{\bu}:[\bu(2),\infty )\rightarrow \R $ is defined as 
 $ \pi_{\bu}(\mathtt{u}_{k}(2)):=\mathtt{u}_{k}(1) $, for every $k \geq 0$, and $ \pi_{\bu} $ is linear in the interval $ [\mathtt{u}_{k}(2), \mathtt{u}_{k+1}(2)] $. The set of all paths comprising the graph $ \mathscr{G} $ is denoted by $ \mathcal{X}:=\{\pi_{\bu}:\bu\in\mathscr{V}\} $. We define diffusive paths as  follows:
\begin{definition} 
For $ n\geq 1 $ and normalizing constants $ \sigma ,\gamma >0 $ and for a path  $ \pi $  starting from $  \varsigma_{\pi} $, the scaled path $ \pi^{(n)}(\sigma ,\gamma) $ is defined by
\[ \pi^{(n)}(\sigma ,\gamma):[ \varsigma_{\pi}/(n^{2}\gamma),\infty )\rightarrow \R\] 
such that 
$ \pi^{(n)}(\sigma ,\gamma)(t)=\pi(n^{2}\gamma t)/(n\sigma ) $.
\end{definition}

The collection of scaled paths is $ \mathcal{X}_{n} (\sigma ,\gamma):=\{\pi_{\bu}^{(n)}(\sigma ,\gamma):\bu\in\mathscr{V}\} $. 
In Theorem \ref{BW} we show that, for $d=2$, the scaling limit of the graph $ \mathscr{G} $  is the Brownian web.

The Brownian web is a collection of coalescing Brownian motions starting from each space-time point in $ \R^{2} $. 
It was  introduced by Arratia \cite{Arratia1,Arratia2} to study the asymptotic behaviour of the one-dimensional voter model.  T\'{o}th and Werner \cite{Toth} described a version of the Brownian web in their study of the true self repelling motion. For a comprehensive review see Schertzer {\it et al.}\/  \cite{Sun}. The Brownian web we describe here is as in Fontes \textit{et al.} \cite{Fontes1, Fontes2}.

Let $\R^{2}_c$ denote the completion of the space time plane $\R^2$ with
respect to the metric
\begin{equation}
\label{rhodef}
 \rho((x_1,t_1),(x_2,t_2)) := |\tanh(t_1)-\tanh(t_2)|\vee \Bigl| \frac{\tanh(x_1)}{1+|t_1|}
 -\frac{\tanh(x_2)}{1+|t_2|} \Bigr|.
\end{equation}
 The topological space $\R^{2}_c$ can be identified with the
continuous image of $[-\infty,\infty]^2$ under a map that identifies the line
$[-\infty,\infty]\times\{\infty\}$ with  a point $(\ast,\infty)$, and the line
$[-\infty,\infty]\times\{-\infty\}$ with a point $(\ast,-\infty)$.
A path $\pi$ in $\R^{2}_c$ with starting time $ \varsigma_{\pi}\in [-\infty,\infty]$
is a mapping $\pi :[ \varsigma_{\pi},\infty]\rightarrow [-\infty,\infty] \cup \{ \ast \}$ such that
$\pi(\infty)= \ast$ and, when $ \varsigma_{\pi} = -\infty$, $\pi(-\infty)= \ast$;
Also $t \mapsto (\pi(t),t)$ is a continuous
map from $[ \varsigma_{\pi},\infty]$ to $(\R^{2}_c,\rho)$.
We then define $\Pi$ to be the space of all paths in $\R^{2}_c$ with all possible starting times in $[-\infty,\infty]$.
The following metric 
\begin{equation*}
d_{\Pi} (\pi_1,\pi_2) := |\tanh(\varsigma_{\pi_1})-\tanh(\varsigma_{\pi_2})|\vee\sup_{t\geq
\varsigma_{\pi_1}\wedge
\varsigma_{\pi_2}} \Bigl|\frac{\tanh(\pi_1(t\vee\varsigma_{\pi_1}))}{1+|t|}-\frac{
\tanh(\pi_2(t\vee\varsigma_{\pi_2}))}{1+|t|}\Bigr|
\end{equation*}
 (for $\pi_1,\pi_2\in \Pi$), makes $\Pi$ a complete, separable metric space. 
Convergence in this metric can be described as locally uniform convergence of paths 
as well as convergence of starting points.

Let ${\mathcal H}$ be the space of compact subsets of $(\Pi, d_{\Pi})$ equipped with
the Hausdorff metric $d_{{\mathcal H}}$ given by
\begin{equation*}
d_{{\mathcal H}}(K_1,K_2) := \sup_{\pi_1 \in K_1} \inf_{\pi_2 \in
K_2} d_{\Pi} (\pi_1,\pi_2)\vee
\sup_{\pi_2 \in K_2} \inf_{\pi_1 \in K_1}  d_{\Pi} (\pi_1,\pi_2);
\end{equation*}
 $({\mathcal H},d_{{\mathcal H}})$ is a complete, separable metric space. Let
$ \mathcal{B}_{{\mathcal H}}$ be the Borel  $\sigma$-algebra on the metric space $({\mathcal H},d_{{\mathcal H}})$.
The Brownian web ${\mathcal W}$ is characterized as:
\begin{theorem}
\label{theorem:Bwebcharacterisation}
\textup{\textbf{(Fontes \textit{et al.} \cite{Fontes2})}} There exists an $({\mathcal H}, {\mathcal B}_{{\mathcal H}})$ valued random variable
${\mathcal W}$ whose distribution  is uniquely determined by
the following properties:
\begin{itemize}
\item[$(a)$] for each deterministic point $\bz\in \R^2$,
there is a unique path $ \pi^{\bz}\in {\mathcal W}$  almost surely;
\item[$(b)$] for a finite set of deterministic points $ \bz_1,\dotsc, \bz_k \in \R^2$,
the collection $(\pi^{\bz_1},\dotsc,\pi^{\bz_k})$ is distributed as coalescing Brownian motions  starting from $ \bz_1,\dotsc, \bz_k $;
\item[$(c)$] for any countable deterministic dense set ${\mathcal D} \subseteq   \R^2$, 
${\mathcal W}$ is the closure of $\{ \pi^{\bz}: \bz\in {\mathcal D} \}$ in $(\Pi,  d_{\Pi})$  almost surely.
\end{itemize}
\end{theorem}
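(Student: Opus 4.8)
The statement to prove is the Fontes--Isopi--Newman--Ravishankar characterization theorem: the existence and uniqueness in distribution of an $({\mathcal H},{\mathcal B}_{\mathcal H})$-valued random variable ${\mathcal W}$ satisfying $(a)$, $(b)$, $(c)$. Since this is quoted in the excerpt as a known result attributed to Fontes \textit{et al.} \cite{Fontes2}, my plan is to reconstruct the proof along the lines of that reference rather than invent something new.

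\emph{Uniqueness.} First I would show that the three properties pin down the law. Fix a countable dense deterministic set ${\mathcal D}\subseteq\R^2$. By $(a)$ there is almost surely a unique path $\pi^{\bz}$ through each $\bz\in{\mathcal D}$, and by $(b)$ the finite-dimensional marginals $(\pi^{\bz_1},\dots,\pi^{\bz_k})$ for $\bz_i\in{\mathcal D}$ are distributed as coalescing Brownian motions -- an explicitly described law (independent Brownian motions up to first meeting, coalesced thereafter). Hence the joint law of the countable family $\{\pi^{\bz}:\bz\in{\mathcal D}\}$, viewed as a random element of $\Pi^{{\mathcal D}}$, is uniquely determined by Kolmogorov consistency. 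By $(c)$, ${\mathcal W}$ is a measurable (indeed continuous, on a full-measure set) function -- closure in $(\Pi,d_\Pi)$ -- of this countable family, so the law of ${\mathcal W}$ on $({\mathcal H},{\mathcal B}_{\mathcal H})$ is uniquely determined. One must check the measurability of the closure map and that the choice of ${\mathcal D}$ does not matter; the latter follows because for two dense sets ${\mathcal D},{\mathcal D}'$ one can realize both families on a common probability space with the ${\mathcal D}'$-paths being limits of ${\mathcal D}$-paths, using continuity of coalescing Brownian motions in their starting points.

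\emph{Existence.} For existence I would construct ${\mathcal W}$ directly. Enumerate ${\mathcal D}=\{\bz_1,\bz_2,\dots\}$ and, using Kolmogorov extension, build on one probability space a consistent family $\{W^{\bz}:\bz\in{\mathcal D}\}$ of coalescing Brownian motions (consistency of the finite-dimensional coalescing laws under permutation and marginalization is the routine check here). Each $W^{\bz}$ is a.s. a path in $\Pi$. Define ${\mathcal W}_0:=\overline{\{W^{\bz}:\bz\in{\mathcal D}\}}$, the closure in $(\Pi,d_\Pi)$. The key steps are then: (1) show ${\mathcal W}_0$ is almost surely a \emph{compact} subset of $\Pi$, hence an element of ${\mathcal H}$ -- this is the main obstacle and rests on a tightness/equicontinuity estimate controlling the modulus of continuity of all the coalescing paths simultaneously (a maximal inequality over the countable family, exploiting coalescence to reduce to finitely many genuinely distinct Brownian motions on any compact space-time region); (2) verify that ${\mathcal W}_0$ satisfies $(a)$ -- for a deterministic $\bz$ not in ${\mathcal D}$, approximate by $\bz_n\to\bz$ in ${\mathcal D}$ and show $W^{\bz_n}$ converges in $d_\Pi$ to a limiting path through $\bz$, and that this limit is the unique path of ${\mathcal W}_0$ through $\bz$ (uniqueness uses that two distinct limit paths through $\bz$ would force two coalescing Brownian motions started arbitrarily near $\bz$ to stay a positive distance apart, contradicting coalescence); (3) verify $(b)$ by the same approximation, using that coalescing Brownian motion laws are continuous in the starting points; (4) verify $(c)$ holds for \emph{every} countable dense deterministic ${\mathcal D}'$, not just the one used in the construction, again by the cross-realization argument from the uniqueness part.

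\emph{Main obstacle.} The delicate point is the compactness in $(2)$ of the construction: one needs a uniform control, over the whole countable dense family, of both the spatial oscillation of paths and their behaviour near the compactification boundary $(\ast,\pm\infty)$, so that the closure is compact in $(\Pi,d_\Pi)$. This is handled by a second-moment / Arzel\`a--Ascoli argument: on a compact region of $\R^2_c$, coalescence collapses the family to finitely many paths up to small error, and each is a Brownian motion whose modulus of continuity is controlled by the L\'evy modulus; a Borel--Cantelli argument over a sequence of compact regions exhausting $\R^2_c$ then gives almost sure compactness. Once compactness and $(a)$--$(c)$ are in hand, the uniqueness argument above shows the law does not depend on ${\mathcal D}$, completing the proof.
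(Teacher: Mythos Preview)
The paper does not prove this theorem at all: it is stated with attribution to Fontes \textit{et al.}\ \cite{Fontes2} and used as a black box, so there is no ``paper's own proof'' to compare against. You correctly recognized this and offered a reconstruction of the original Fontes--Isopi--Newman--Ravishankar argument; your outline (uniqueness via property $(c)$ making ${\mathcal W}$ a measurable function of the countable coalescing family, existence via Kolmogorov extension on a dense set followed by closure and an Arzel\`a--Ascoli/compactness argument) is a faithful sketch of how that proof actually proceeds.
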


Theorem \ref{theorem:Bwebcharacterisation} shows that the collection  $ {\mathcal W} $ is 
almost surely determined by countably many coalescing Brownian motions.  The scaling limit of the two-dimensional graph $ \mathscr{G} $ is given by the following theorem:
 \begin{theorem}\label{BW}
There exist $ \sigma:=\sigma(p) $ and $ \gamma:=\gamma (p) $ such that,  $ \bar{\mathcal{X}}_{n}(\sigma ,\gamma) $ converges in distribution to the standard Brownian web $\mathcal{W} $   as $ n\rightarrow \infty $, where  $ \bar{\mathcal{X}}_{n}(\sigma ,\gamma) $ is the closure of $ \mathcal{X} _{n}(\sigma , \gamma) $ in $ (\Pi ,d_{\Pi}) $.
\end{theorem}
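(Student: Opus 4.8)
The proof proceeds by verifying the standard sufficient conditions for convergence to the Brownian web established by Fontes, Isopi, Newman and Ravishankar \cite{Fontes2}, in the form appropriate to families of coalescing, non-crossing paths (see also Newman, Ravishankar and Sun). Two structural facts are already in hand: the paths of $\mathscr G$, and hence those of $\mathcal X_n(\sigma,\gamma)$, are non-crossing (this is precisely the planarity of $\mathscr G$ argued before Theorem \ref{tree}), and two paths that share a vertex are identical from that vertex onward, so the family is coalescing. It therefore suffices to prove, for suitable $\sigma=\sigma(p)>0$ and $\gamma=\gamma(p)>0$: (I) convergence of finite-dimensional distributions --- for any finite deterministic set $\bz_1,\dots,\bz_m\in\R^2$, the paths of $\mathcal X_n(\sigma,\gamma)$ starting from the scaled open vertices nearest to $\bz_1,\dots,\bz_m$ converge jointly in distribution to a system of coalescing Brownian motions started at $\bz_1,\dots,\bz_m$; (T) tightness of $\{\bar{\mathcal X}_n(\sigma,\gamma)\}$ as $\mathcal H$-valued random variables, for which we check the criterion $(T_1)$ of \cite{Fontes2}; and (B) the condition $(B_2')$ of \cite{Fontes2}, which for non-crossing families rules out spurious extra paths in the limit.

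The core of the argument is the analysis of a single path. Fix $\bu\in\Zz$, put $\mathtt u_0=\bu$, $\mathtt u_k=\widetilde{\mathtt u}_{k-1}$, and set $\Delta X_k=\mathtt u_k(1)-\mathtt u_{k-1}(1)$, $\Delta T_k=\mathtt u_k(2)-\mathtt u_{k-1}(2)=l_k\ge1$, $S_k=\Delta T_1+\dots+\Delta T_k$. Determining $\widetilde{\mathtt u}_{k-1}=\mathtt u_k$ from $\mathtt u_{k-1}$ through \eqref{new1} amounts to inspecting the levels $H(\mathtt u_{k-1},1),H(\mathtt u_{k-1},2),\dots$ in turn until the first that contains an open vertex; this queries $\{U_\bw\}$ only at lattice heights in the strip $(\mathtt u_{k-1}(2),\mathtt u_k(2)]$. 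Since these strips are pairwise disjoint and $\{U_\bw\}$ is i.i.d., translation invariance forces the increments $(\Delta X_k,\Delta T_k)$, $k\ge1$, to be i.i.d. Furthermore $\P(l_1>m)=\prod_{j=1}^m(1-p)^{2j+1}$, so $l_1$ has all exponential moments and $\mu:=\E[l_1]<\infty$; by the left--right symmetry of the construction $\E[\Delta X_1]=0$; and $\V[\Delta X_1]<\infty$ because $|\Delta X_1|\le l_1$. Hence $\pi_\bu$ is the piecewise-linear interpolation, sampled along the renewal epochs $\{S_k\}$, of the centred square-integrable random walk $\bigl(\sum_{j\le k}\Delta X_j\bigr)_k$, and Donsker's theorem combined with the renewal law of large numbers $S_k/k\to\mu$ shows $\pi_\bu^{(n)}(\sigma,\gamma)\weak B$, a Brownian motion, once $\sigma$ and $\gamma$ are fixed (in terms of $\V[\Delta X_1]$ and $\mu$) so that the limit is standard. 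Applying this to the path started at the open vertex nearest to the scaled point $(n\sigma\,z_1,\,n^2\gamma\,z_2)$, for $\bz=(z_1,z_2)$ --- which lies within $O_{\P}(1)$ lattice steps, hence at rescaled distance $o(1)$ from $\bz$, because open vertices have density $p$ --- yields the one-path case of (I).

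The genuinely new difficulty, the source of the ``dependence'' in the title, is that distinct paths are read off the same field $\{U_\bw\}$, so the increments of two paths are correlated whenever the paths are close; in particular, the several paths appearing in (I) are not a priori independent before they coalesce. The saving feature is that this dependence is short range. Exploring two paths in parallel, ordered by height, the next increment of each queries $\{U_\bw\}$ in a cone of half-width $l$ above its current vertex; as $l$ has exponential tails, these cones are disjoint --- and the two increments then conditionally independent --- as soon as the two current vertices differ horizontally by more than $R_n$, for any $R_n\to\infty$. Choosing $R_n\to\infty$ with $R_n/n\to0$, the set of heights on which a given pair of paths is within $R_n$ of the other without having coalesced collapses to the diagonal under diffusive scaling (coalescence being forced at the first meeting, and the non-crossing property confining the pair in between), so it is negligible; off it, the increments are, up to a vanishing coupling error, those of two independent walks to which the single-path invariance principle applies. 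Consequently each pairwise difference converges to a Brownian motion absorbed at $0$, and the standard induction on $m$ upgrades this to joint convergence to coalescing Brownian motions, giving (I). Tightness (T) reduces through $(T_1)$ to a modulus-of-continuity bound uniform over starting points, which follows from a maximal inequality for $\bigl(\sum\Delta X_j\bigr)$ and the exponential tails of $l$, summed over the $O(n)$ admissible starting columns in a bounded window; and (B) follows from the coalescing, non-crossing structure together with the same renewal bounds, because in expectation only $O(\epsilon)\cdot o(1)$ distinct vertices of $\mathscr V$ can supply paths through a space-time rectangle of the size used in $(B_2')$ without those paths having already merged.

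The principal obstacle is exactly the control asserted in the previous paragraph: establishing rigorously that correlated nearby paths still converge jointly to Brownian motions that are independent up to their meeting time. This calls for quantitative tail estimates --- uniform over the already-revealed part of $\{U_\bw\}$ --- on the total height gained and horizontal displacement accumulated by a pair of paths while they stay within $o(n)$ of each other without coalescing, so that after scaling by $n$ this excursion vanishes; such estimates come from the exponential integrability of $l$ together with a comparison of the difference walk with a one-dimensional walk reflected or absorbed near the origin. By contrast, once the single-path renewal picture is available, the remaining pieces --- Donsker for one path, the tightness criterion $(T_1)$, and the density bound $(B_2')$ --- are essentially routine.
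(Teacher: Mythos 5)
Your overall frame is the same as the paper's: verify the Fontes--Isopi--Newman--Ravishankar criterion for non-crossing paths, after a single-path analysis in which the per-step increments $(X_k,Y_k)$ are i.i.d.\ with exponential tails, $\E[X_1]=0$, $\sigma^2=\V(X_1)$, $\gamma=\E[Y_1]$, and Donsker plus the renewal theorem give $\pi^{(n)}_{\mathbf 0}\Rightarrow B_{\mathbf 0}$. That part is sound. But there are two genuine gaps. First, for condition (I$_1$) with $k\ge 2$ starting points you only assert that the near-diagonal excursions of a pair of paths are negligible under diffusive scaling. What actually has to be proved is twofold: (i) the true $k$-th path can be coupled so as to agree with an independently built path for as long as it stays at unscaled distance of order $n^{1-\alpha}$ from the other $k-1$ paths, which requires controlling the heights of \emph{all} exploration triangles below a given level uniformly (the event $W_{n,s}$ in the paper), not just the typical increment; and (ii) once two paths come within distance $n^{1-\alpha}$ they genuinely coalesce within time that vanishes after scaling by $n^{2}$ --- without this the limit of the pair need not be \emph{coalescing} Brownian motions, only two Brownian motions that touch. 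The paper handles (ii) by introducing the $\alpha$-coalescence map, sandwiching the true pair between two auxiliary paths started $o(n)$ apart, and invoking the two-point result (Proposition 5.3 of Roy, Saha and Sarkar); your ``comparison with a reflected or absorbed walk'' names the right idea but is not carried out, and note that the horizontal difference of two paths here is not Markov step by step --- only along the joint regeneration times constructed in Section \ref{s2} --- so the estimate is not a textbook fact.

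Second, and more seriously, you dismiss (B$_1$) and (B$_2$) as routine. They are not: they require the quantitative coalescing-time tail $\P\{T_{\n}(\bu,\bv)\ge t\}\le C x/\sqrt{t}$ for two paths started at horizontal distance $x$ on the same level, and, for (B$_2$), a bound of the form $\E[\nu_{x,y,z}]\le C+C'(y-x)(z-y)$ on the expected first collision time among three paths. In this dependent model these rest on the martingale property of $Z_l(\bu,\bv)$ along regeneration times (Theorem \ref{rmart}), a hitting-time theorem for martingales with second moments bounded below and third moments bounded above on $\{V_n>0\}$ (Coupier, Saha, Sarkar and Tran), exponential tail control of the regeneration increments $T_{l+1}-T_l$ (Proposition \ref{ppppp}), and a Foster--Lyapunov-type drift argument for the triple process. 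Your stated justification --- that ``in expectation only $O(\epsilon)\cdot o(1)$ distinct vertices can supply paths through the rectangle without having merged'' --- is circular: that density statement is essentially what (B$_1$)/(B$_2$) assert, and it is exactly what the coalescing-time estimates are needed to establish. As written, the proposal identifies the correct strategy but omits the two hardest quantitative inputs, so it does not yet constitute a proof of the theorem.
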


The rest of this paper is organized as follows. In Section \ref{s2} we present a construction of the graph.   This construction, which is different from that of Athreya \textit{et al.} \cite{Athreya}, brings out the Markov property inherent in the process through regeneration times and is the basis of the proofs of Theorems \ref{tree} and  \ref{BW} in Sections \ref{s3} and \ref{s4} respectively.

\section{A construction of the graph}\label{s2}
 We proceed to the construction of the graph which is based on the construction given in Roy \textit{et al.} \cite{Roy2016}.  We state the construction of the graph starting from two vertices, the general case for $k$ vertices follows in a similar fashion.

Fix two vertices $ \bu ,\bv\in\Z^{d} $ (not necessarily open) with $ \bu( d)=\bv( d) $. Let $h_0(\bu) := \bu$, $h_0(\bv) := \bv$, $\Delta_0:= \emptyset$,  $r_0  = s_0= \bu(d)$ and $\Psi_0: \Delta_0 \to [0,1]$ the empty function. Take
\begin{enumerate}
\item $h_1(\bu) := \tilde{\bu}$,
$h_1(\bv) := \tilde{\bv}$,  where $ \tilde{\bu} $ and $ \tilde{\bv} $ are as \eqref{new1},
\item $r_1 := \min\{h_1(\bu)(d),  h_1(\bv)(d)\}$, $s_1 := \max\{h_1(\bu)(d),  h_1(\bv)(d)\}$, \item $\Delta_1 := \begin{cases} V(h_0(\bu), s_1) \setminus V(h_0(\bu),r_1) & \text{if } h_1(\bu)(d) \geq h_1(\bv)(d)\\
V(h_0(\bv), s_1) \setminus V(h_0(\bv),r_1) & \text{if } h_1(\bv)( d) > h_1(\bu)( d)\end{cases}
$ ,
\item $\Psi_1:\Delta_1 \to [0,1]$ given by $\Psi_1(\bw) = U_{\bw}$ for $\bw \in \Delta_1$, where $\Psi_1$ is taken to be the empty function if $\Delta_1 = \emptyset$.
\end{enumerate}

Having defined $h_n(\bu)$, $h_n(\bv)$, $r_n$, $s_n$, $\Delta_n$ and $\Psi_n$, we obtain
\begin{enumerate}
\item $h_{n+1}(\bu) := \begin{cases}  \tilde{ h}_n(\bu) & \text{if } h_n(\bu)(d) = r_n\\
h_n(\bu) & \text{if } h_n(\bu)(d) > r_n\end{cases}$, \\$h_{n+1}(\bv) := \begin{cases}\tilde{ h}_n(\bv) & \text{if } h_n(\bv)(d) = r_n\\
h_n(\bv) & \text{if } h_n(\bv)(d) > r_n\end{cases}$ ,\\where $ \tilde{ h}_n(\bu) $ and $ \tilde{ h}_n(\bv) $ are as \eqref{new1},
\item $r_{n+1} := \min\{h_{n+1}(\bu)(d),  h_{n+1}(\bv)(d)\}$, $s_{n+1} := \max\{h_{n+1}(\bu)(d),  h_{n+1}(\bv)(d)\}$, \item $\Delta_{n+1} := \begin{cases} V(h_n(\bu), s_{n+1}) \setminus V(h_n(\bu),r_{n+1}) & \text{if } h_{n+1}(\bu)(d) \geq h_{n+1}(\bv)(d)\\
V(h_n(\bv), s_{n+1}) \setminus V(h_n(\bv),r_{n+1}) & \text{if } h_{n+1}(\bv)(d) > h_{n+1}(\bu)(d)\end{cases}
$,
\item $\Psi_{n+1}:\Delta_{n+1} \to [0,1]$ given by $\Psi_{n+1}(\bw) = U_{\bw}$ for $\bw \in \Delta_{n+1}$, where $\Psi_{n+1}$ is taken to be the empty function if $\Delta_{n+1} = \emptyset$.
\end{enumerate}

We elaborate on the formal construction above with the help of Figure \ref{2}. Starting from $ \bu$ and $\bv $, which are at the same level, we get the vertices $ h_{1}(\bu) $ and $ h_{1}(\bv) $ to obtain the edges $ \langle \bu, h_{1}(\bu) \rangle$ and $ \langle \bv, h_{1}(\bv) \rangle$. At this stage we observe that $h_{1}(\bu)$ is below $h_{1}(\bv)$, and we now explore the vertices above $h_{1}(\bu)$ to obtain $h_{2}(\bu)$. During this exploration process we have information about the shaded region $\Delta_1$, which we call the `history region' at this stage. Having obtained the edge  $ \langle h_{1}(\bu), h_{2}(\bu) \rangle$, we rename $h_{1}(\bv)$ as $h_{2}(\bv)$. Next we observe that $h_{2}(\bv)$ is below $h_{2}(\bu)$ and so we start exploring from $h_{2}(\bv)$ with $\Delta_2$ as the history region until we obtain $h_{3}(\bv)$, when we rename $h_{2}(\bu)$ as $h_{3}(\bu)$. In the figure 
$h_{3}(\bu)$  and $h_{3}(\bv)$ are at the same level, so that the history region $\Delta_3$ is empty. We start afresh from these two vertices and proceed as above. As will be seen later (see the paragraph leading to \eqref{r-regen}) when the history region is empty, we have a regeneration step. At stage $n$ of this construction, the information of the uniform random variables in the history region is specified by the function $ \Psi_{n} $.

\vspace*{3mm}
\begin{figure}[htb]
\centering
\scalebox{1}  
{
\begin{pspicture}(0,-5.11)(12.557383,0.03)
\definecolor{colour0}{rgb}{0.5019608,0.5019608,0.5019608}
\definecolor{colour1}{rgb}{0.8,0.8,0.8}
\psline[linecolor=black, linewidth=0.014](9.6,-2.63)(7.6,-4.63)(5.6,-2.63)
\psline[linecolor=black, linewidth=0.014](3.6,-4.63)(5.2,-3.03)(2.0,-3.03)(3.6,-4.63)
\psline[linecolor=colour0, linewidth=0.024, fillstyle=solid,fillcolor=colour1](5.8,-2.83)(9.4,-2.83)(10.8,-1.43)(4.4,-1.43)(5.8,-2.83)
\psline[linecolor=black, linewidth=0.044](7.6,-4.63)(8.8,-1.43)
\psline[linecolor=black, linewidth=0.044](3.6,-4.63)(3.6,-3.03)
\psline[linecolor=black, linewidth=0.014](5.6,-1.03)(3.6,-3.03)(1.6,-1.03)
\psline[linecolor=colour0, linewidth=0.024, fillstyle=solid,fillcolor=colour1](1.8,-1.23)(5.4,-1.23)(6.0,-0.63)(1.2,-0.63)(1.8,-1.23)
\psline[linecolor=black, linewidth=0.044](3.6,-3.03)(2.4,-0.63)
\psdots[linecolor=black, dotsize=0.12](7.6,-4.63)
\psdots[linecolor=black, dotsize=0.12](3.6,-4.63)
\psdots[linecolor=black, dotsize=0.12](3.6,-3.03)
\psdots[linecolor=black, dotsize=0.12](2.4,-0.63)
\psdots[linecolor=black, dotsize=0.12](8.8,-1.43)

\psline[linecolor=black, linewidth=0.014](8.8,-1.43)(9.6,-0.63)(8.0,-0.63)(8.8,-1.43)
\psline[linecolor=black, linewidth=0.044](8.8,-1.43)(8.4,-0.63)
\psdots[linecolor=black, dotsize=0.12](8.4,-0.63)
\psline[linecolor=black, linewidth=0.03, linestyle=dotted, dotsep=0.10583334cm](0.8,-3.03)(11.2,-3.03)
\psline[linecolor=black, linewidth=0.03, linestyle=dotted, dotsep=0.10583334cm](0.8,-1.43)(11.2,-1.43)
\psline[linecolor=black, linewidth=0.03, linestyle=dotted, dotsep=0.10583334cm](0.8,-0.63)(11.2,-0.63)
\rput[bl](7.3,-2.26){{\footnotesize $ \Delta_{1} $}}
\rput[bl](3.3,-1.08){{\footnotesize $ \Delta_2 $}}
\rput[bl](0.3,-3.09){$ r_{1} $}
\rput[bl](0.3,-1.49){$ r_{2} $}
\rput[bl](0.3,-0.69){$ r_{3} $}
\rput[bl](3.45,-4.95){$ \bu $}
\rput[bl](7.45,-4.95){$ \bv $}
\rput[bl](1.3,-0.45){{\footnotesize $ h_{2}(\bu)=h_{3}(\bu) $}}
\rput[bl](8.0,-0.45){{\footnotesize $ h_{3}(\bv) $}}
\psline[linecolor=black, linewidth=0.03, arrowsize=0.05291667cm 2.0,arrowlength=1.4,arrowinset=0.0]{->}(8.95,-1.35)(9.5,-1.1)
\rput[bl](9.5,-1.2){{\footnotesize $ h_{1}(\bv)=h_{2}(\bv) $}}
\rput[bl](2.0,-2.75){{\footnotesize $ h_{1}(\bu) $}}
\psline[linecolor=black, linewidth=0.03, arrowsize=0.05291667cm 2.0,arrowlength=1.4,arrowinset=0.0]{->}(3.45,-2.95)(2.9,-2.7)
\rput[bl](6.0,-0.23){{\footnotesize $ \Delta_3=\emptyset $}}
\end{pspicture}
}
\caption{\label{2}\textit{The first three steps of a process started with $ \bu ,\bv\in\Zz $.}}
\end{figure}

Having constructed $\mathcal{Z}_{n} :=(h_{n}(\bu),h_{n}(\bv),\Delta_{n},\Psi_{n})$ for $ n\geq 0 $, we do not have any information about the region $\{\by \in \Z^d: \by(d)  > r_n\}$ except for the trapezoidal region $\Delta_n$. About the trapezoidal region $\Delta_n$, we know that there is at least one vertex $\bw \in \Delta_n \cap \{\by \in \Z^d: \by(d)  = s_n\}$ with $U_\bw <p$ and for all vertices $\bz \in \Delta_n \cap \{\by \in \Z^{d}: \by(d)  < s_n\}$ we have $U_\bz  \geq p$. Now let ${\cal D}_\bu := \{V(\bu,s) \setminus V(\bu,r) : r \leq s,\; r,s \in \Z\}$, and ${\cal D} := \cup_{\bu \in \Z^{d}} {\cal D}_\bu$. Consider the space
\begin{align*}
\mathfrak{G}:=&\Big\{(\bu,\bv,\Delta ,\Psi):\bu,\bv\in\Z^{d},
 \Delta\in {\cal D},\text{ and }
\Psi :\Delta\rightarrow [0,1]\text{ is a mapping with }\\& \Psi(\bw)\geq p \text{  for  all  }\bw\in\Delta \text{ with } \bw (d) < \max\{\bz(d) : \bz \in \Delta\}, \;  \Psi(\bw) < p \text{ for} \\&
\text{at least one vertex } \bw  \in  \Delta
\text{ and }\Psi \text{ is the empty function when }\Delta=\emptyset
\Big\}.
\end{align*}
We then have the following theorem:
\begin{theorem}\label{beingMP}
The process $ \mathcal{Z} := \{\mathcal{Z}_{n} : n \geq 0\}$ is a Markov process on the state space $ \mathfrak{G} $.
\end{theorem}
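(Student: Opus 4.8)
The plan is to show that the conditional law of $\mathcal{Z}_{n+1}$ given $\mathcal{Z}_0,\dotsc,\mathcal{Z}_n$ depends only on $\mathcal{Z}_n$, and then to verify that the transition keeps us inside $\mathfrak{G}$. First I would record exactly which uniform random variables have been ``consulted'' up to stage $n$: by construction, the only randomness revealed in passing from $\mathcal{Z}_{n-1}$ to $\mathcal{Z}_n$ is the collection $\{U_\bw : \bw \in \Delta_n'\}$ for a certain explored trapezoidal block, so by induction the sigma-field $\mathcal{F}_n := \sigma(\mathcal{Z}_0,\dotsc,\mathcal{Z}_n)$ is generated by $\{U_\bw : \bw \in R_n\}$ where $R_n := \{\by \in \Z^d : \by(d) \leq r_n\} \cup \Delta_n$ is the full ``past region plus history region.'' The key structural fact to isolate and prove is the \emph{look-only-at-what-you-need} property: the value of $h_{n+1}(\bu)$ (say, in the case $h_n(\bu)(d) = r_n$) is, by \eqref{new1}, a deterministic function of $h_n(\bu)$ and of $\{U_\bw : \bw \in V(h_n(\bu))\}$, and crucially the search region $V(h_n(\bu)) \cap \{\by(d) > r_n\}$ is \emph{disjoint} from the already-consulted region $R_n \cap \{\by(d) > r_n\} = \Delta_n$ — this is exactly where one uses that $\bu$ is the vertex at the lower level $r_n$, so we never re-examine a vertex inside $\Delta_n$, which belongs to the partner's history. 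That disjointness, together with the independence of the family $\{U_\bw\}$, is what I expect to be the main obstacle: one must check carefully that the part of the light-cone $V(h_n(\bu))$ that lies strictly above level $r_n$ but at or below the new level $s_{n+1}$ is precisely $\Delta_{n+1}$ (when it is $\bu$'s side that is explored) and contains no vertex of $\Delta_n$, so that all the ``fresh'' uniforms consulted are independent of $\mathcal{F}_n$.

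Granting that, the argument proceeds as follows. Condition on $\mathcal{Z}_n = (\mathbf{a},\mathbf{b},\Delta,\Psi)$. Then $\mathcal{F}_n$ is generated by $\{U_\bw : \bw \in R_n\}$, and on this event the values $\{U_\bw : \bw \in \Delta\}$ are pinned to $\Psi$ while the values $\{U_\bw : \bw \notin R_n\}$ are i.i.d.\ uniform and independent of $\mathcal{F}_n$. The recipe producing $\mathcal{Z}_{n+1}$ reads off $h_{n+1}$, $r_{n+1}$, $s_{n+1}$, $\Delta_{n+1}$, $\Psi_{n+1}$ using only (i) the deterministic data $\mathbf{a},\mathbf{b},\Delta,\Psi$ and (ii) the fresh uniforms $\{U_\bw : \bw \in \Delta_{n+1}^{\mathrm{new}}\}$ where $\Delta_{n+1}^{\mathrm{new}} := (V(\mathbf{a},s_{n+1})\setminus V(\mathbf{a},r_{n+1})) \setminus \Delta$ (resp.\ with $\mathbf{b}$) is disjoint from $R_n$; note the definition \eqref{new1} of $h_{n+1}$ uses $U_\bw$ for $\bw$ ranging over $V(h_n(\cdot))$, but on the part of that region lying at level $\leq r_n$ the values are forced to be $\geq p$ (those vertices are closed, by the defining property of $\Delta$ and by induction, since $V(\mathbf{a}, l-1) \cap \mathscr{V} = \emptyset$ forces us past level $r_n$), and on $\Delta$ the values are given by $\Psi$, so the only genuine randomness is the fresh block. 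Hence the conditional law of $\mathcal{Z}_{n+1}$ given $\mathcal{F}_n$ is a fixed kernel $P(\mathcal{Z}_n, \cdot)$ depending on $\mathcal{Z}_n$ alone, which is the Markov property.

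It remains to check that $P(\mathcal{Z}_n, \cdot)$ is supported on $\mathfrak{G}$, i.e.\ that $\mathcal{Z}_{n+1}$ is again an admissible state. This is a deterministic bookkeeping step: $h_{n+1}(\bu), h_{n+1}(\bv) \in \Z^d$ is clear; $\Delta_{n+1} \in \mathcal{D}$ because it is of the form $V(\mathbf{c}, s_{n+1}) \setminus V(\mathbf{c}, r_{n+1})$ for $\mathbf{c} \in \{h_n(\bu), h_n(\bv)\}$ and $r_{n+1} \leq s_{n+1}$; and the two constraints on $\Psi_{n+1}$ — that $\Psi_{n+1}(\bw) \geq p$ whenever $\bw$ is below the top level of $\Delta_{n+1}$, and $\Psi_{n+1}(\bw) < p$ for at least one $\bw$ — hold precisely because of condition (ii)–(iii) of \eqref{new1}: the explored vertex $h_{n+1}(\cdot)$ was found by first verifying $V(\cdot, l-1) \cap \mathscr{V} = \emptyset$ (all those vertices closed, giving the ``$\geq p$ below the top'' clause) and then landing on an open vertex at the top level $s_{n+1}$ (giving the ``$< p$ somewhere'' clause). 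When $\Delta_{n+1} = \emptyset$ one has $r_{n+1} = s_{n+1}$, i.e.\ a regeneration, and $\Psi_{n+1}$ is the empty function, which is admissible. A brief remark would note that the $k$-vertex version is identical with $\bu, \bv$ replaced by the vector $(\bu_1,\dotsc,\bu_k)$, the level $r_n$ by $\min_i h_n(\bu_i)(d)$, and $\Delta_n$ by the union of the trapezoidal histories of those coordinates not currently at the minimal level; nothing in the argument changes. \qed
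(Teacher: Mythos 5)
Your second paragraph is, in substance, the paper's own argument, but the ``key structural fact'' you single out in the first paragraph is false, and it is a genuine error about the model. You claim that the search region $V(h_n(\bu))$ of the lower vertex is disjoint from the history trapezoid $\Delta_n$, ``so we never re-examine a vertex inside $\Delta_n$.'' This disjointness fails in general: $\Delta_n$ occupies exactly the levels $r_n+1,\ldots,s_n$ through which the new cone passes, and when the two current positions are horizontally close the cone of $h_n(\bu)$ does contain vertices of $\Delta_n$. For instance in $d=2$ with $\bu=(0,0)$, $\bv=(2,0)$: if $h_1(\bu)=(-1,1)$ while the $\bv$-path jumps to $h_1(\bv)=(4,2)$ because $(1,1),(2,1),(3,1)$ and $(0,2),\ldots,(3,2)$ are all closed, then $\Delta_1$ contains $(0,2)$, which also lies in $V(h_1(\bu))$, so the very next exploration re-examines a vertex of $\Delta_1$. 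Indeed, if your disjointness held, the pair of positions $(h_n(\bu),h_n(\bv))$ alone would already be Markov and there would be no reason to carry $(\Delta_n,\Psi_n)$ in the state; the entire point of the enlarged state is that the next exploration may look into $\Delta_n$, where the uniforms are read off from $\Psi_n$ rather than being fresh.

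The cure is what your second paragraph in fact does (and what the paper does): condition on $\mathcal{Z}_n$; the invariant to verify is not disjointness but that above level $r_n$ no vertex outside $\Delta_n$ has been examined at any earlier step (the statement in the paragraph preceding the theorem), so the uniforms at sites above level $r_n$ outside $\Delta_n$ remain i.i.d.\ uniform and independent of the past, while on the possible overlap with $\Delta_n$ the values are pinned by $\Psi_n$; the step producing $\mathcal{Z}_{n+1}$ is then a fixed measurable function of $\mathcal{Z}_n$ and this fresh noise, which is the Markov property. Two smaller slips to repair along the way: $\sigma(\mathcal{Z}_0,\ldots,\mathcal{Z}_n)$ is not ``generated by $\{U_\bw:\bw\in R_n\}$'' (the explored set is a random, proper subset of the half-space and $R_n$ is itself random); the usable statement is the conditional-independence one just described, which is also the level at which the paper argues via the auxiliary collection $Y$ and the functions $f,g_n$. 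And in the overshoot case $\Delta_{n+1}$ is the part of the cone of $h_n(\bu)$ strictly above $r_{n+1}=s_n$, not strictly above $r_n$; the slab between levels $r_n+1$ and $s_n$ may be discarded precisely because future explorations never look at levels $\leq r_{n+1}$.
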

\begin{proof}
Let  $ Y:=\{V_{\bw}:\bw\in\Z^{d},\bw(d)>0\} $ be a set of i.i.d. uniform (0,1) random variables, independent of the collection $ \{U_{\bw}:\bw\in\Z^{d}\} $ used to build the model. Since at the end of the $ n $-th step all vertices in $ \Z^{d}\setminus (\Delta_{n}\cup H(r_{n})) $ are unexplored,  we can assign another i.i.d. uniform (0,1) random variables instead of them, and hence we can find a function $ f:[0,1]^{\Z^{d}\setminus H(0)}\times \mathfrak{G}\rightarrow [0,1]^{\Z^{d}} $ such that $ \{U_{\bw}:\bw\in\Z^{d},\bw(d)>r_{n}\} ~\big |~ \mathcal{Z}_{n}\stackrel{d}{=} f(Y,\mathcal{Z}_{n}) $. Thus, $ \mathcal{Z}_{n+1} ~\big |~ \{\mathcal{Z}_{j}:j=0,1,\ldots ,n\}\stackrel{d}{=} g_{n}(\mathcal{Z}_{n},f(Y,\mathcal{Z}_{n})) $ for some function $ g_{n} : \mathfrak{G}\times [0,1]^{\Z^{d}\setminus H(r_{n})} \to \mathfrak{G} $, that means $ \mathcal{Z} $ is a Markov process \cite{Levin}. 
\end{proof}

When $\Delta_n = \emptyset$, then $h_n(\bu)(d) = h_n(\bv)(d)$ and the entire half-space  above these two vertices are unexplored. We will exploit this renewal property repeatedly, and so we define
 $
\tau_{0}:=0 $ and
\begin{equation}
\label{r-regen}
\tau_{l}=\tau_{l}(\bu ,\bv):=\inf\{n>\tau_{l-1}:\Delta_{n}=\emptyset\} =
\inf\{n>\tau_{l-1}:r_n = s_n\} \text{ for $l \geq 1$}. \end{equation}

The total number of steps between the $ (l-1) $-th and $ l $-th renewal of the process  is denoted by
\[\sigma_{l}=\sigma_{l}(\bu ,\bv):=\tau_{l}-\tau_{l-1}, \]
and the time required for the $ l $-th renewal  is denoted by
\begin{equation}
\label{rrenewal}
T_{l}=T_{l}(\bu ,\bv):=h_{\tau_{l}}(\bu)(d)-\bu(d)=h_{\tau_{l}}(\bv)(d)-\bv(d).
\end{equation}

From the renewal nature of the process, we have that $ \{\sigma_{l}:l \geq 1\} $ and $ \{T_{l+1}-T_{l}:l\geq 0 \} $ are collections of independent random variables.
The next proposition shows that $ \sigma_{l} $ and $ T_{l+1}-T_{l} $ have exponentially decaying tail probabilities. The following two lemmas will be used in the proof of the proposition. 

\begin{lemma}\textup{\textbf{(Lemma 2.6 of \cite{Roy2016})}}\label{Lemma 2.6 of RSS} 
Let $ \{\theta_{n}:n\geq 1\} $ be a sequence of i.i.d. positive integer valued random variables with $ \P\{\theta_{1}=1\}>0 $ and $ \P\{\theta_{1}\geq m\}\leq C_{1,1}\exp\{-C_{1,2}m\}$ for all $ m\geq 1 $, where $ C_{1,1},C_{1,2} $ are some positive constants. For a sequence of random variables $ \{M_{l}:l\geq 0\} $  defined by $ M_{0}:=0 $ and $ M_{l+1}:=\max\{M_{l},\theta_{l+1}\}-1 $, set $ \tau^{M}:=\inf\{l\geq 1 :M_{l}=0\}  $. Then, we have 
\[\P\{\tau^{M}\geq m\}\leq C_{1,3}\exp\{-C_{1,4}m\}\]
for any $ m\geq 1 $, where $ C_{1,3} $ and $ C_{1,4} $ are some positive constants.
\end{lemma}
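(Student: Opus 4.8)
The plan is to make the recursion explicit, reinterpret $\{\tau^M\ge m\}$ as a covering event on $\Z$, extract from it a genuine renewal (increment) chain with \emph{uniformly} controlled jumps, and then finish with a one‑line split.

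Write $\xi_n:=\theta_n-1\ge0$, so $\P\{\xi_1=0\}=q:=\P\{\theta_1=1\}>0$ and $\xi_1$ inherits the exponential tail. Since $M_{l+1}=\max\{M_l,\theta_{l+1}\}-1=\max\{M_l-1,\xi_{l+1}\}$, an immediate induction gives $M_l=\max_{1\le n\le l}\bigl(\xi_n-(l-n)\bigr)\ge0$ for $l\ge1$; hence $M_l\ge1$ iff the integer $l$ lies in $I_n:=[\,n,\,n+\xi_n-1\,]$ for some $n\le l$, and therefore $\{\tau^M>m\}$ is exactly the event that the random intervals $\{I_n:n\ge1\}$ cover $\{1,\dots,m\}$. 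Because the covered prefix of $\N$ is a down‑set, it can be generated greedily: $R_0:=0$ and $R_i:=\max\{\,n+\xi_n-1:\,1\le n\le R_{i-1}+1\,\}$. Then $R_i\ge R_{i-1}$, strictly unless $R_{i-1}+1$ is uncovered, and at the first index where strictness fails one has $\tau^M=R_{i-1}+1$; so with $D_i:=R_i-R_{i-1}\ge0$ and $\tau^D:=\inf\{i\ge1:D_i=0\}$ we get
\[ \tau^M=1+\sum_{i=1}^{\tau^D-1}D_i . \]
A routine check ($R_{i-1}+1$ is a stopping time for the natural filtration of $(\xi_n)_{n\ge1}$, and the fresh variables read at step $i+1$ are $\xi_{R_{i-1}+2},\dots,\xi_{R_i+1}$, a block of length $D_i$) shows $\{D_i\}_{i\ge1}$ is a Markov chain, with $D_1=\xi_1$ and, on $\{D_i=d\}$, $D_{i+1}\disteq\max\bigl(0,\ \max_{0\le k\le d-1}(\eta_k-k)\bigr)$ for i.i.d.\ copies $\eta_0,\eta_1,\dots$ of $\xi_1$.

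Two properties of this chain now suffice. (a) \emph{Uniform chance of stopping.} For every $d\ge1$,
\[ \P\{D_{i+1}=0\mid D_i=d\}=\prod_{k=0}^{d-1}\P\{\xi_1\le k\}\ \ge\ q\prod_{k\ge1}\P\{\xi_1\le k\}=:\beta>0 , \]
the infinite product being positive since $\sum_{k\ge1}\P\{\xi_1\ge k+1\}<\infty$; hence $\P\{\tau^D>j\}\le(1-\beta)^{j-1}$. (b) \emph{Uniformly light increments.} For $\mu\in(0,C_{1,2})$ put $\phi_\mu:=\E[e^{\mu\xi_1}]<\infty$; from $e^{\mu D_{i+1}}\le1+\sum_{k=0}^{D_i-1}e^{-\mu k}e^{\mu\eta_k}$ one gets $\E[e^{\mu D_{i+1}}\mid\mathcal F_i]\le1+\phi_\mu\sum_{k\ge0}e^{-\mu k}=:B_\mu<\infty$, \emph{uniformly in} $D_i$, and iterating, $\E\bigl[\exp(\mu\sum_{i=1}^{j}D_i)\bigr]\le B_\mu^{\,j}$. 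Property (b) is the heart of the matter: the chain $\{D_i\}$ (like $\{M_l\}$ itself) can jump upward, so no global geometric‑drift / exponential‑supermartingale estimate is available, but the penalty $-k$ inside the maximum keeps the exponential moments of the increments bounded uniformly in the current state.

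Finally, fix $\mu\in(0,C_{1,2})$ and an integer $j^\ast$. Since the $D_i$ are non‑negative, on $\{\tau^D\le j^\ast\}$ one has $\sum_{i=1}^{\tau^D-1}D_i\le\sum_{i=1}^{j^\ast-1}D_i$, so by the identity above, Markov's inequality, and (a)--(b),
\[ \P\{\tau^M>m\}\ \le\ \P\{\tau^D>j^\ast\}+\P\Bigl\{\textstyle\sum_{i=1}^{j^\ast-1}D_i\ge m\Bigr\}\ \le\ (1-\beta)^{\,j^\ast-1}+e^{-\mu m}B_\mu^{\,j^\ast}. \]
Taking $j^\ast=\lceil\mu m/(2\ln B_\mu)\rceil$ (here $B_\mu>1$) makes both terms decay exponentially in $m$, whence $\P\{\tau^M\ge m\}=\P\{\tau^M>m-1\}\le C_{1,3}e^{-C_{1,4}m}$ after absorbing a constant; the hypothesis $\P\{\theta_1=1\}>0$ enters exactly in (a) — without it the $I_n$ would a.s.\ cover $\N$. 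The two steps I expect to require the most care are the rigorous justification of the renewal decomposition (that $R_{i-1}+1$ is a stopping time and that $\{D_i\}$ is genuinely Markov with the stated kernel) and the verification of (b), which is precisely what makes the two‑term split close despite the possibility of large upward jumps.
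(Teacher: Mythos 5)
Your proof is correct, but note that there is nothing in this paper to compare it against: the statement is imported verbatim as Lemma~2.6 of Roy, Saha and Sarkar (2016) and used as a black box, so your proposal supplies a self-contained argument where the paper gives none. The skeleton checks out: the closed form $M_l=\max_{1\le n\le l}\bigl(\xi_n-(l-n)\bigr)$ is right, so $\{\tau^M>m\}$ is indeed the event that the intervals $I_n$ cover $\{1,\dots,m\}$; the greedy chain $R_i$, $D_i=R_i-R_{i-1}$ has exactly the kernel you state, since $R_i$ is a function of $\xi_1,\dots,\xi_{R_{i-1}+1}$, $R_{i-1}+1$ is a stopping time of $(\xi_n)$, and the block $\xi_{R_{i-1}+2},\dots,\xi_{R_i+1}$ of length $D_i$ is unread at stage $i$; estimate (a) is an identity $\P\{D_{i+1}=0\mid D_i=d\}=\prod_{k=0}^{d-1}\P\{\xi_1\le k\}$, bounded below uniformly in $d$ by $q\prod_{k\ge1}\P\{\xi_1\le k\}>0$ thanks to the summable tail (this is precisely where $\P\{\theta_1=1\}>0$ is used); and estimate (b) follows from $e^{\mu\max(0,\max_k(\eta_k-k))}\le 1+\sum_{k\ge0}e^{-\mu k}e^{\mu\eta_k}$ with $\mu<C_{1,2}$. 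The final split with $j^\ast$ of order $m$ then gives the exponential bound, and the identity $\tau^M=1+\sum_{i=1}^{\tau^D-1}D_i$ together with nonnegativity of the $D_i$ justifies replacing the random range by $j^\ast-1$. Two things to write out when you formalize: fix the filtration in (b) explicitly, e.g.\ $\mathcal F_i:=\sigma(\xi_1,\dots,\xi_{R_{i-1}+1})$, so that $\sum_{i\le j-1}D_i=R_{j-1}$ is $\mathcal F_{j-1}$-measurable and the iteration $\E[\exp(\mu R_j)]\le B_\mu\,\E[\exp(\mu R_{j-1})]$ is licit; and note that on $\{\tau^M>m,\ \tau^D\le j^\ast\}$ the integer $\sum_{i=1}^{\tau^D-1}D_i$ exceeds $m-1$, hence is $\ge m$, which is what your Markov bound uses. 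One side remark: your parenthetical claim that no geometric-drift argument is available for $\{M_l\}$ is a bit overstated --- a Foster--Lyapunov estimate with $e^{sM_l}$ does hold outside a bounded set and would give an alternative route --- but this does not affect the validity of your argument.
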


\begin{lemma}\textup{\textbf{(Lemma 2.7 of \cite{Roy2016})}} \label{Lemma 2.7 of RSS}
Let $ \{\theta_{n}:n\geq 1\} $ be a sequence of i.i.d. random variables and $ N $ be a random variable taking values in $ \N $. If $ \E[\mathrm{e}^{\alpha_1 N}]<\infty $ and  $ \E[\mathrm{e}^{\alpha_2 \theta_{1}}]<\infty $ for some $ \alpha_1 >0 $ and $ \alpha_2 >0 $,  then there exists some $ \gamma >0 $ such that $ \E[\mathrm{e}^{\gamma \sum_{n=1}^{N}\theta_{n}}]<\infty $.
\end{lemma}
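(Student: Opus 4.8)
The plan is to reduce to nonnegative summands and then exploit the continuity at the origin of the moment generating function of $\theta_1$: once that function is close enough to $1$, the (at most geometric in $N$) conditional growth of $\E\big[e^{\gamma\sum_{n=1}^{N}\theta_n}\mid N\big]$ is swamped by the exponential decay of the law of $N$. First I would pass to $\theta_n^{+}:=\max\{\theta_n,0\}$: since $\gamma>0$ and $\theta_n\le\theta_n^{+}$ we have $e^{\gamma\sum_{n=1}^{N}\theta_n}\le e^{\gamma\sum_{n=1}^{N}\theta_n^{+}}$, and $\E[e^{\alpha_2\theta_1^{+}}]\le 1+\E[e^{\alpha_2\theta_1}]<\infty$, so it suffices to treat the nonnegative summands $\theta_n^{+}$. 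Write $\phi(\beta):=\E[e^{\beta\theta_1^{+}}]$ for $\beta\in[0,\alpha_2]$; for $0<\beta\le\alpha_2$ the integrand is dominated by the integrable $e^{\alpha_2\theta_1^{+}}$ and tends to $1$ as $\beta\downarrow0$, so by dominated convergence $\phi(\beta)\to1$, i.e.\ $\log\phi(\beta)\to0$. Hence we may fix $\gamma\in(0,\alpha_2/2]$ with $\phi(2\gamma)e^{-\alpha_1}<1$.

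With such a $\gamma$, if $N$ is independent of $\{\theta_n\}$ (the reading used in the application) then conditioning on $N$ and using that the $\theta_n$ are i.i.d.\ gives $\E\big[e^{\gamma\sum_{n=1}^{N}\theta_n^{+}}\big]=\sum_{k\ge1}\P\{N=k\}\,\phi(\gamma)^{k}\le\sum_{k\ge1}\P\{N=k\}\,e^{\alpha_1 k}=\E[e^{\alpha_1 N}]<\infty$, since $\phi(\gamma)\le\phi(2\gamma)\le e^{\alpha_1}$. If one does not wish to assume that independence, decompose on $\{N=k\}$ and apply the Cauchy--Schwarz inequality twice:
\begin{align*}
\E\Big[e^{\gamma\sum_{n=1}^{N}\theta_n^{+}}\Big]
&=\sum_{k\ge1}\E\Big[e^{\gamma\sum_{n=1}^{k}\theta_n^{+}}\mathbf{1}_{\{N=k\}}\Big]
\le\sum_{k\ge1}\phi(2\gamma)^{k/2}\big(\P\{N=k\}\big)^{1/2}\\
&\le\Big(\sum_{k\ge1}\big(\phi(2\gamma)e^{-\alpha_1}\big)^{k}\Big)^{1/2}\Big(\sum_{k\ge1}e^{\alpha_1 k}\,\P\{N=k\}\Big)^{1/2},
\end{align*}
where the second factor equals $\E[e^{\alpha_1 N}]^{1/2}<\infty$ and the first is a convergent geometric series by the choice of $\gamma$. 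In either case $\E[e^{\gamma\sum_{n=1}^{N}\theta_n}]\le\E[e^{\gamma\sum_{n=1}^{N}\theta_n^{+}}]<\infty$.

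I do not expect any genuine obstacle in this lemma. The two points that need care are the possible negativity of the $\theta_n$ --- handled by passing to $\theta_n^{+}$ --- and the fact that $\gamma$ cannot be prescribed in advance: it must be chosen \emph{after} the law of $\theta_1$, small enough that $\log\phi(2\gamma)$ falls below $\alpha_1$, which is precisely where the continuity of the moment generating function at $0$ is used.
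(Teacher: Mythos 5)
Your proof is correct: the reduction to $\theta_n^{+}$, the continuity of $\phi$ at $0$ (so that $\phi(2\gamma)<e^{\alpha_1}$ for small $\gamma\le\alpha_2/2$), and the double Cauchy--Schwarz bound together give the claim, and it is worth noting that your second, non-independent version is exactly what the applications here require, since $N=\tau_1$ (or $\tau^M$) is built from the same $J_n$'s being summed. The paper itself states this lemma without proof, importing it from Roy \textit{et al.} \cite{Roy2016}, where the argument is essentially the same Cauchy--Schwarz/Chernoff computation, so your route coincides with the standard one.
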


\begin{proposition}\label{ppppp}
For all $ m \geq 1$, and  some positive constants  $ C_{1}, C_{2} , C_3$ and $C_4$, we have
$\P\{\sigma_{l}\geq m\}\leq C_{1}\exp\{-C_{2}m\}$ and $\P\{T_{l+1} - T_{l}\geq m\}\leq C_{3}\exp\{-C_{4}m\}$.
\end{proposition}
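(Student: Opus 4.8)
\emph{Strategy.} The plan is to read off both tails from one and the same one-dimensional ``gap'' picture. Fix the two walkers and run the construction of Section~\ref{s2}; inside a single excursion, i.e.\ for $\tau_{l-1}\le n<\tau_l$, at step $n+1$ the lower of $h_n(\bu),h_n(\bv)$ is advanced to its next open vertex by a vertical increment $\theta_{n+1}\ge 1$, and a renewal occurs precisely when it lands at the level of the other walker (at the renewal step $n=\tau_{l-1}$ both walkers advance). Writing $G_n:=s_n-r_n\ge 0$ for the current level-gap, one gets the pathwise recursion $G_{n+1}=|G_n-\theta_{n+1}|$ for $n\ge\tau_{l-1}+1$, with $G_{\tau_{l-1}+1}=|\theta^{\bu}-\theta^{\bv}|$ from the renewal step, and
\[\sigma_l=\inf\{j\ge 1:G_{\tau_{l-1}+j}=0\},\qquad T_{l+1}-T_l=\sum_{n=\tau_l+1}^{\tau_{l+1}}(s_n-s_{n-1}),\]
where $s_n-s_{n-1}=(\theta_n-G_{n-1})^{+}$ for $n\ge\tau_l+2$ and $s_{\tau_l+1}-s_{\tau_l}=\max\{\theta^{\bu},\theta^{\bv}\}$. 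So everything reduces to a tail bound on the increments that is uniform over the already-explored configuration.

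\emph{The increment estimate (the crux).} The key claim is: conditionally on $\mathcal Z_n$ and on which walker advances, $\theta_{n+1}\le G_n+\xi_{n+1}$, where the $\xi_{n+1}$ can be coupled with an i.i.d.\ sequence $\{\bar\theta_j\}$, independent of the past, having an atom at $1$ of size $\ge 1-(1-p)^3>0$ and a super-exponential tail $\P\{\bar\theta_1>m\}\le(1-p)^{m^2+2m}$. The reason is geometric: the only information $\mathcal Z_n$ carries about the half-space $\{\by(d)>r_n\}$ is the trapezoidal slice $\Delta_n$, which lives on the levels $r_n+1,\dots,s_n$; hence the part of the advancing walker's $\mathcal L_1$-cone lying strictly above level $s_n$ consists of fresh i.i.d.\ uniforms, of which there are at least $\sum_{j=1}^{m}(2j+1)=m^2+2m$ within $m$ levels above $s_n$, so overshooting $s_n$ by more than $m$ has conditional probability $\le(1-p)^{m^2+2m}$, while the cone one level above $s_n$ always has at least three fresh vertices, supplying the atom. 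The coupling is the resampling of unexplored uniforms used in the proof of Theorem~\ref{beingMP}. I expect this uniform-in-history estimate to be the main obstacle: one must deal with configurations in which $\Delta_n$ already covers the advancing walker's cone up to level $s_n$, so that the walker is \emph{forced} to overshoot, and check that such a step leaves a configuration from which the walker again sees a fresh vertex at level $r_{n+1}+1$, so that the atom---hence the renewal mechanism needed by Lemma~\ref{Lemma 2.6 of RSS}---is not lost.

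\emph{Tail of $\sigma_l$.} For $G_n\ge 1$ one has $|G_n-\theta_{n+1}|\le\max\{G_n-1,\,\xi_{n+1}\}\le\max\{G_n-1,\,\bar\theta_{n+1}\}$, using $|g-\theta|\le\max\{g,\theta\}-1$ for $g,\theta\ge 1$, and at the renewal step $|\theta^{\bu}-\theta^{\bv}|\le\max\{\theta^{\bu},\theta^{\bv}\}$ with $\theta^{\bu},\theta^{\bv}$ dominated by fresh increments. Feeding the $\bar\theta_j$'s into the process $M$ of Lemma~\ref{Lemma 2.6 of RSS} (with a trivial modification allowing an exponentially-tailed random initial value $M_0$ to absorb the renewal step) gives $(G_{\tau_{l-1}+j})_{j}\le(M_j)_j$ pathwise, hence $\sigma_l\le\tau^M$; since $\{\bar\theta_j\}$ is i.i.d.\ with a positive atom at $1$ and an exponential (indeed super-exponential) tail, Lemma~\ref{Lemma 2.6 of RSS} yields $\P\{\sigma_l\ge m\}\le C_1\exp\{-C_2 m\}$.

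\emph{Tail of $T_{l+1}-T_l$.} From the first display, $T_{l+1}-T_l\le(\theta^{\bu}+\theta^{\bv})+\sum_{j=2}^{\sigma_{l+1}}\bar\theta_j$, a random sum of i.i.d.\ variables with finite exponential moments whose number $\sigma_{l+1}$ also has a finite exponential moment by the previous step. Lemma~\ref{Lemma 2.7 of RSS} then gives $\E[\exp\{\gamma(T_{l+1}-T_l)\}]<\infty$ for some $\gamma>0$, and Markov's inequality gives $\P\{T_{l+1}-T_l\ge m\}\le C_3\exp\{-C_4 m\}$. (Independence of $\{\sigma_l\}_l$ and of $\{T_{l+1}-T_l\}_l$ across $l$ is the renewal property already noted; this last step could alternatively be carried out by hand via Cauchy--Schwarz, using the uniform conditional increment bound together with the exponential tail of $\sigma_{l+1}$.)
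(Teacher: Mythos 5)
Your overall architecture is the same as the paper's (dominate the growth of the gap $G_n=s_n-r_n$ between renewals by i.i.d.\ variables with exponentially decaying tails, feed them into Lemma~\ref{Lemma 2.6 of RSS} to get $\sigma_l\le\tau^M$, then bound $T_{l+1}-T_l$ by a random sum of these variables and use Lemma~\ref{Lemma 2.7 of RSS} plus Markov's inequality). The mechanism you propose for producing the i.i.d.\ dominating sequence is different, however, and as written it has a genuine gap. First, there is an off-by-one that breaks the comparison with Lemma~\ref{Lemma 2.6 of RSS}: your inequality gives $G_{n+1}\le\max\{G_n-1,\xi_{n+1}\}$ with only $\xi_{n+1}\le\bar\theta_{n+1}$, while the lemma's process satisfies $M_{j+1}=\max\{M_j,\bar\theta_{j+1}\}-1=\max\{M_j-1,\bar\theta_{j+1}-1\}$. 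In an overshoot step $G_{n+1}=\xi_{n+1}$ can equal $\bar\theta_{j+1}$ and thus exceed $M_{j+1}$ (take $G_n=M_j=1$ and $\xi_{n+1}=\bar\theta_{j+1}=2$: then $G_{n+1}=2>1=M_{j+1}$), so the claimed pathwise domination $G_{\tau_{l-1}+j}\le M_j$, hence $\sigma_l\le\tau^M$, fails. This is not cosmetic: with only $\xi\le\bar\theta$ and $\bar\theta\ge 1$, the honest comparison process is $M'_{j+1}=\max\{M'_j-1,\bar\theta_{j+1}\}\ge 1$, which never reaches $0$, so the argument collapses rather than losing a constant. The natural repair is to dominate $\xi_{n+1}+1$; but then the dominating law keeps its atom at $1$ only if you prove that, uniformly over all reachable states with $G_n\ge 1$, the advancing walker fails to overshoot level $s_n$ with probability bounded away from $0$. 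That is exactly the ``$\Delta_n$ may cover the advancing walker's cone up to level $s_n$'' issue you flag yourself and leave unresolved, so the crux of the proof is missing. (The claim is in fact true, but it needs a separate geometric argument --- e.g.\ that for $n\ge 1$ the advancing walker's current position is open and hence cannot lie in the closed interior of the previously explored cone, forcing a fresh or known-open cone vertex at some level $\le s_n$ --- none of which appears in your write-up.)

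The paper sidesteps both difficulties by dominating the \emph{full} vertical increment rather than the overshoot: it looks along a diagonal boundary ray $m_n^{\pm}$ of the advancing walker's cone, chosen to avoid the other walker's explored cone $V(h_{\mathtt{m}_0}(\bv))$; the vertices on this ray are never explored, so the height of the first open vertex on it is a geometric~$(p)$ variable $J_{n+1}$ whose law does not depend on the history, and $h_{n+1}(\bx)(d)-h_n(\bx)(d)\le J_{n+1}$ because an open vertex on the ray at height $j$ guarantees a jump within $j$ levels. The strict inequalities $s_{n+1}-r_{n+1}<s_{n+1}-r_n\le J_{n+1}$ (overshoot case) and $s_{n+1}-r_{n+1}<s_n-r_n$ (otherwise) supply exactly the missing $-1$ in the induction $s_n-r_n\le M_n$, and the geometric law automatically carries the atom at $1$ required by Lemma~\ref{Lemma 2.6 of RSS}; independent auxiliary geometrics $J'_n$ are added at steps where only one walker moves so that the $J_n$ are i.i.d. Your treatment of the second tail coincides with the paper's ($T_1\le\sum_{n=1}^{\tau_1}J_n$, Lemma~\ref{Lemma 2.7 of RSS}, Markov), but in your version it rests on the same unproved coupling, so it too is conditional on the missing estimate.
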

\begin{proof} 
We prove it for $ l=1 $. The general case being similar. For $ n\geq 0 $ and $ \bx\in\{\bu ,\bv\} $ define
\begin{align*}
m^{-}_n (\bx)&:= \big\{ \big(h_{n}(\bx)(1)-j,\ldots ,h_{n}(\bx)(d-1)-j,h_{n}(\bx)(d)+j\big):j> 0\big\},\\
m^{+}_n(\bx)&:=\big\{ \big(h_{n}(\bx)(1)+j ,\ldots ,h_{n}(\bx)(d-1)+j,h_{n}(\bx)(d)+j\big):j>0\big\}.
\end{align*}

First let $n$ be such that $ \Delta_{n}=\emptyset $, i.e. $ h_{n}(\bu)(d)=h_{n}(\bv)(d) $. In this case, we must have either $ m_{n}^{-}(\bu)\cap V(h_{n}(\bv))=\emptyset $ or $ m_{n}^{+}(\bu)\cap V(h_{n}(\bv))=\emptyset $. Set
\begin{align*}
m_{n}(\bu)= \left\{
\begin{array}{ll}
m_{n}^{-}(\bu) & \text{if }m_{n}^{-}(\bu)\cap V(h_{n}(\bv))=\emptyset , \\
m_{n}^{+}(\bu) & \text{if }m_{n}^{+}(\bu)\cap V(h_{n}(\bv))=\emptyset ,
\end{array} \right.
\end{align*}
and define  $ m_{n}(\bv) $ similarly. For $ n\geq 0 $ and $ \bx\in\{\bu ,\bv\} $, taking $\bx_{n,1}, \bx_{n,2}, \ldots$ to be the vertices of $\Z^{d}$ in $m_n (\bx)$ ordered in terms of their proximity to $h_n(\bx)$, and
$$
J_{n+1}(\bx):=\inf\big\{j\geq 1 : \bx_{n,j} \in\mathscr{V}\big\}, \text{ a  geometric $(p)$ random variable},
$$
we  have $ h_{n+1}(\bx)(d)-h_{n}(\bx)(d)\leq J_{n+1}(\bx) $. Set 
$$ J_{n+1}:=\max\{J_{n+1}(\bu),J_{n+1}(\bv)\}. $$

Next suppose $n$ is such that $ \Delta_{n}\neq\emptyset $. Without loss of generality suppose $h_n(\bu)(d) < h_n(\bv)(d)$. Thus  by our construction, $h_{n+1}(\bv) = h_n(\bv)$ and $h_{n+1}(\bu) (d) > h_n(\bu) (d) $.   It must be the case that either $m^{-}_n (\bu) \cap  V(h_{ \mathtt{m}_{0}}(\bv)) = \emptyset$ or 
$m^{+}_n(\bu) \cap V(h_{\mathtt{m}_{0}}(\bv)) = \emptyset$, where $ \mathtt{m}_{0}:=\max\{m\geq 0:h_{m}(\bv)(d)\leq h_{n}(\bu)(d)\} $ (see Figure \ref{prop2.2}).
Here let $m_n (\bu)$  denote $m^{-}_n (\bu)$ if $m^{-}_n (\bu) \cap  V(h_{\mathtt{m}_{0}}(\bv)) = \emptyset$, otherwise it denotes $m^{+}_n (\bu)$. Also, let $J_{n+1}(\bu)$  be defined as before. We have
$$
h_{n+1}(\bv)(d)-h_{n}(\bv)(d)=0 \text{ and }
h_{n+1}(\bu)(d)-h_{n}(\bu)(d)\leq J_{n+1}(\bu).$$
Then, consider an independent collection of i.i.d. random variables $ \{J_{n}^{\prime}:n\geq 1\} $ so that $ J_{1}^{\prime} $ has  a geometric $(p)$ distribution, and define  $$J_{n+1}:=\max\{J_{n+1}(\bu),J_{n+1}^{\prime}\}. $$

Now we observe that, in each of the two cases above, for each $ n\geq 0 $, 
\[ \max\big\{h_{n+1}(\bu)(d)-h_{n}(\bu)(d),h_{n+1}(\bv)(d)-h_{n}(\bv)(d)\big\}\leq J_{n+1}, \]
 where  $ \{J_{n}:n\geq 1\} $ is a sequence of i.i.d. positive integer valued random variables with exponentially decaying tail probabilities.  Taking auxiliary random variables $ M_{0}:=0 $, $ M_{n+1}:=\max\{M_{n},J_{n+1}\}-1 $ for all $ n\geq 0 $, and $ \tau^{M}:=\inf\{n\geq 1 : M_{n}=0\} $, from Lemma \ref{Lemma 2.6 of RSS} we have
\begin{align}\label{p2}
\P\{\tau^{M}\geq m\}\leq C_{1}\exp\{-C_{2}m\} \text{ for all } m \geq  1  \text{ and positive constants
} C_{1} \text{ and }  C_{2}.
\end{align}

\vspace*{3mm}
\begin{figure}[htb]
\centering
\scalebox{1}  
{
\begin{pspicture}(0,-4.435072)(12.409856,2.0547838)
\definecolor{colour0}{rgb}{0.5019608,0.5019608,0.5019608}
\definecolor{colour1}{rgb}{0.7019608,0.7019608,0.7019608}
\psline[linecolor=black, linewidth=0.024, fillstyle=solid,fillcolor=colour1](11.6,1.244928)(6.8,-3.555072)(2.0,1.244928)
\psline[linecolor=colour0, linewidth=0.038](4.8,-2.355072)(1.2,1.244928)
\psdots[linecolor=black, dotsize=0.2](6.8,-3.555072)
\psdots[linecolor=black, dotsize=0.2](7.6,0.444928)
\psdots[linecolor=black, dotsize=0.2](5.6,-3.555072)
\psdots[linecolor=black, dotsize=0.2](4.8,-2.355072)
\psline[linecolor=black, linewidth=0.036](6.8,-3.555072)(7.6,0.444928)
\psline[linecolor=black, linewidth=0.036](5.6,-3.555072)(4.8,-2.355072)
\psline[linecolor=black, linewidth=0.02, linestyle=dashed, dash=0.20638889cm 0.10583334cm](5.6,-3.555072)(6.8,-2.355072)(4.4,-2.355072)(5.6,-3.555072)
\rput[bl](6.4,-3.955072){{\footnotesize $ h_{n-1}(\bv) $}}
\rput[bl](6.4,0.64492797){{\footnotesize $ h_{n}(\bv)=h_{n+1}(\bv) $}}
\rput[bl](4.9,-3.955072){{\footnotesize $ h_{n-1}(\bu) $}}
\rput[bl](3.05,-2.205072){{\footnotesize $ h_{n}(\bu) $}}
\psline[linecolor=black, linewidth=0.024, arrowsize=0.05291667cm 2.0,arrowlength=1.4,arrowinset=0.0]{->}(4.6,-2.255072)(4.0,-2.055072)
\rput[bl](0.5,0.444928){{\footnotesize \textcolor{colour0}{$  m_{n}^{-}(\bu) $}}}
\rput[bl](6,1.644928){{\footnotesize $ V(h_{n-1}(\bu)) $}}
\psdots[linecolor=colour0, dotsize=0.04](1.0,1.444928)
\psdots[linecolor=colour0, dotsize=0.04](0.8,1.644928)
\psdots[linecolor=colour0, dotsize=0.04](0.6,1.844928)
\psdots[linecolor=black, dotsize=0.04](11.8,1.444928)
\psdots[linecolor=black, dotsize=0.04](12.0,1.644928)
\psdots[linecolor=black, dotsize=0.04](12.2,1.844928)
\psdots[linecolor=black, dotsize=0.04](1.8,1.444928)
\psdots[linecolor=black, dotsize=0.04](1.6,1.644928)
\psdots[linecolor=black, dotsize=0.04](1.4,1.844928)
\end{pspicture}
}
\caption{\label{prop2.2}\textit{In this realization,  $ \mathtt{m}_{0}=n-1 $ and $ m^{-}_n (\bu) \cap  V(h_{\mathtt{m}_{0}}(\bv)) = \emptyset$.}}
\end{figure}

 We will show that $ s_n - r_n \leq M_{n} $ for all $ 0\leq n\leq \sigma_{1} $, which yields $\sigma_1 \leq \tau^M$, and thus, together with \eqref{p2}, completes the proof  of the first part of the proposition.
First, $s_0 - r_0 = M_0 = 0$.    Also, $ s_{1}-r_{1}<J_{1} $, which implies that $ s_{1}-r_{1}\leq M_{1} $.
Now suppose $ s_n - r_n \leq M_{n} $ holds for  some  $ 0 < n < \sigma_{1} $.  If $ \Delta_{n+1}=\emptyset $,  then $ 0=s_{n+1} - r_{n+1} \leq M_{n+1} $. Otherwise, without loss of generality suppose $h_n(\bu)(d) < h_n(\bv)(d)$. If $h_{n+1}(\bu)(d) < h_{n+1}(\bv)(d)$, then  $ r_{n}<r_{n+1}<s_{n}=s_{n+1} $, which gives $s_{n+1} - r_{n+1} < s_n - r_n \leq M_{n} $, where the last inequality follows from the induction hypothesis. On the other hand if
$h_{n+1}(\bu)(d) > h_{n+1}(\bv)(d)$, then   $ s_{n+1}-r_{n+1}<s_{n+1}-r_{n}=h_{n+1}(\bu)(d)-h_{n}(\bu)(d)\leq J_{n+1} $. Because of the strictly inequalities, the induction step is completed.

Finally, $T_1 = r_{\tau_1} - r_0 \leq \sum_{n=1}^{\tau_1} J_n$, and so from
 Lemma \ref{Lemma 2.7 of RSS} we have that $\E[\mathrm{e}^{\gamma T_1}]<\infty $ for some $ \gamma >0 $. Using Markov inequality we have
$\P\{T_{1}\geq m\} \leq \E[\mathrm{e}^{\gamma T_{1}}]\mathrm{e}^{-\gamma m}$, which completes the proof of the proposition.
\end{proof}

 By translation invariance,
$ \{h_{\tau_{l}}(\bu)-h_{\tau_{l}}(\bv):l \geq 0\} $
 is a Markov chain  on $ \Z^{d} $, and since $ h_{\tau_{l}}(\bu)(d)=h_{\tau_{l}}(\bv)(d) $,  taking $ \bar{\bw}=(\bw(1), \ldots ,\bw(d-1)) $ for $ \bw=(\bw(1), \ldots ,\bw(d)) $,
\begin{equation}
\label{rZ} \{Z_{l}(\bu ,\bv):=\bar{h}_{\tau_{l}}(\bu)-\bar{h}_{\tau_{l}}(\bv):l  \geq 0\}
\end{equation}
is a Markov chain on $ \Z^{d-1} $ with   $ (0,\ldots ,0)\in\Z^{d-1} $ its only absorbing state.

\section{Geometry of the graph}\label{s3}
The proof of Theorem \ref{tree} is based on the ideas in Gangopadhyay \textit{et al.} \cite{Gangopadhyay}, and Athreya \textit{et al.} \cite{Athreya}, and as such we present a brief sketch of the modifications required.

\subsection{Proof of Theorem \ref{tree}(i)}
\textit{Dimension 2.} We first prove a result which we need now as well as in the next section.
\begin{theorem}
\label{rmart}
For $ \bu ,\bv\in\Zz $ with $ \bu(2)=\bv(2) $, the process $ \{Z_{l}(\bu ,\bv)=h_{\tau_{l}}(\bu)(1)-h_{\tau_{l}}(\bv)(1):l\geq 0\}  $  is a martingale with respect to the filtration $ \{\mathscr{F}_{T_{l}}:l\geq 0\} $, where $ \mathscr{F}_{t}:=\sigma(\{U_{\bw}:\bw\in\Zz,\bw(2)\leq \bu(2)+t\}) $ and $T_l$ is as in (\ref{rrenewal}).
\end{theorem}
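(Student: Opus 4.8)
\emph{Sketch of the approach.} The plan is to exhibit a height--indexed martingale whose value at the renewal time $T_l$ is $Z_l$, and then to apply optional sampling. Fix $\bu,\bv\in\Zz$ with $\bu(2)=\bv(2)=:c$, let $\mathtt{u}_0=\bu,\mathtt{u}_1,\mathtt{u}_2,\dots$ be the successive vertices of $\pi_{\bu}$ (so that $\mathtt{u}_{m+1}=\tilde{\mathtt{u}}_m$), let $\mathtt{v}_0=\bv,\mathtt{v}_1,\dots$ be those of $\pi_{\bv}$, and set
\[
M^{\bu}_t:=\mathtt{u}_{\kappa^{\bu}(t)}(1),\qquad \kappa^{\bu}(t):=\max\{m\ge0:\mathtt{u}_m(2)\le c+t\}\qquad(t\ge0),
\]
with $M^{\bv}_t$ defined in the same way from $\pi_{\bv}$, and $N_t:=M^{\bu}_t-M^{\bv}_t$. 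First I would dispose of the routine points. By the construction of Section~\ref{s2} the sequence $n\mapsto h_n(\bu)$ only ever jumps to its graph-successor, so $h_{\tau_l}(\bu)$ is the unique vertex of $\pi_{\bu}$ at height $c+T_l$; hence $h_{\tau_l}(\bu)(1)=M^{\bu}_{T_l}$ and $Z_l=N_{T_l}$. Since every edge stays in a $45^\circ$ cone, $|M^{\bu}_{T_j}-M^{\bu}_{T_{j-1}}|\le T_j-T_{j-1}$, whence $|Z_l-Z_0|\le2T_l$ and therefore $Z_l\in L^1$ by Proposition~\ref{ppppp} (which gives $\E[T_l]<\infty$). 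Finally, the portions of $\pi_{\bu}$ and $\pi_{\bv}$ below height $c+t$, and the event $\{T_l\le t\}$, are $\mathscr{F}_t$-measurable --- to decide them one only needs, for each vertex already reached, the status of the vertices of its light cone up to height $c+t$ --- so $T_l$ is an $(\mathscr{F}_t)$-stopping time and $Z_l=N_{T_l}$ is $\mathscr{F}_{T_l}$-measurable.

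The core of the proof is that $(M^{\bu}_t)_{t\ge0}$ is an $(\mathscr{F}_t)$-martingale; the same reasoning applies verbatim to $M^{\bv}$ (again a deterministic functional of the same i.i.d.\ field), so $N$ is an $(\mathscr{F}_t)$-martingale. Fix $s<t$ and condition on $\mathscr{F}_s$. The vertex $\mathtt{u}_{\kappa^{\bu}(s)}$ is $\mathscr{F}_s$-measurable, and one checks that $V(\mathtt{u}_{m+1})\subseteq V(\mathtt{u}_m)$ for every $m$: a vertex of $V(\mathtt{u}_{m+1})$ at height $h$ has first coordinate within $h-\mathtt{u}_{m+1}(2)$ of $\mathtt{u}_{m+1}(1)$, hence, since $\mathtt{u}_{m+1}\in V(\mathtt{u}_m)$, within $h-\mathtt{u}_m(2)$ of $\mathtt{u}_m(1)$. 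Thus the whole continuation $\mathtt{u}_{\kappa^{\bu}(s)},\mathtt{u}_{\kappa^{\bu}(s)+1},\dots$ of $\pi_{\bu}$ lives inside $V(\mathtt{u}_{\kappa^{\bu}(s)})$, inspecting each uniform at most once, and it inspects only the levels of that cone strictly above $c+s$, the lower ones being already known from $\mathscr{F}_s$ to carry no open vertex; the uniforms on those higher levels are, conditionally on $\mathscr{F}_s$, fresh and i.i.d. For each $m$ the set $H(\mathtt{u}_m,k)$ is symmetric about the vertical line through $\mathtt{u}_m$, and $\mathtt{u}_{m+1}$ is the $U$-minimal open vertex on the lowest level of $V(\mathtt{u}_m)$ carrying an open vertex, so reflecting the relevant fresh uniforms about $x=\mathtt{u}_m(1)$ is measure preserving and negates the horizontal increment $\mathtt{u}_{m+1}(1)-\mathtt{u}_m(1)$ while fixing the vertical increment. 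Carrying this out step by step (successive steps inspect disjoint, hence independent, blocks of fresh uniforms) shows that, conditionally on $\mathscr{F}_s$ and on all vertical increments of the continuation, the horizontal increments are independent and each symmetric about $0$; since the number $\kappa^{\bu}(t)-\kappa^{\bu}(s)$ of steps climbing from $c+s$ to $c+t$ depends only on those vertical increments, we obtain $\E[M^{\bu}_t-M^{\bu}_s\mid\mathscr{F}_s]=0$.

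It remains to apply optional sampling to the martingale $N$ at the stopping times $T_l\le T_{l+1}$. Here $\sup_{0\le t\le T_{l+1}}|N_t|$ is at most $|Z_0|$ plus the total variation of $M^{\bu}$ plus that of $M^{\bv}$ over $[0,T_{l+1}]$, each variation being bounded by $T_{l+1}$ (horizontal jumps are dominated by vertical ones), so $\sup_{0\le t\le T_{l+1}}|N_t|\le|Z_0|+2T_{l+1}\in L^1$ by Proposition~\ref{ppppp}; hence $(N_{t\wedge T_{l+1}})_{t\ge0}$ is uniformly integrable and optional sampling gives $\E[N_{T_{l+1}}\mid\mathscr{F}_{T_l}]=N_{T_l}$, i.e.\ $\E[Z_{l+1}\mid\mathscr{F}_{T_l}]=Z_l$; together with the measurability and integrability already noted, this is the assertion. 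The step I expect to be the main obstacle is the middle one --- pinning down exactly which uniforms the exploration of $\pi_{\bu}$ has revealed by ``time'' $s$ (in particular that it never inspects a vertex of $V(\mathtt{u}_m)$ strictly above the level of $\mathtt{u}_{m+1}$, so that the residual cone is genuinely fresh), and checking that conditioning on the vertical increments of the continuation does not destroy the left--right symmetry of the horizontal ones. An essentially equivalent but slightly shorter route would instead invoke the renewal structure of Section~\ref{s2} to reduce the martingale increment to the identity $\E[Z_1(\bu,\bv)]=\bu(1)-\bv(1)$ for deterministic same-level $\bu,\bv$, and then run the same symmetry argument inside a single regeneration block.
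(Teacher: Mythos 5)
Your argument is correct, and it packages the proof differently from the paper. The paper introduces no height-indexed martingale: after translating so that $\bu=\mathbf{0}$ and writing $\bar h_n$ for the first coordinate in the joint construction, it proves $\E\big[\bar h_{\tau_{l+1}}-\bar h_{\tau_l}\,\big|\,\mathscr{F}_{T_l}\big]=0$ directly, by expanding $\E[\mathbf{1}(A)(\bar h_{\tau_{l+1}}-\bar h_{\tau_l})]$ for $A\in\mathscr{F}_{T_l}$ over the events $\{\tau_l=n\}\cap\{\tau_{l+1}\geq n+m\}$ and conditioning each one-step increment $\bar h_{n+m}-\bar h_{n+m-1}$ on the step-level $\sigma$-field $\mathcal{G}_{n+m-1}=\mathscr{F}_{h_{n+m-1}(2)}$, where independence of the unexplored uniforms and the left--right symmetry of the exploration slices give zero conditional mean; subtracting the contributions of the two paths then yields the theorem, with measurability asserted and integrability obtained, just as in your write-up, from the cone (Lipschitz) property together with the variables $J_n$ of Proposition \ref{ppppp} and Lemma \ref{Lemma 2.7 of RSS}. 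Your route isolates the same one-step symmetry in the cleaner statement that $M^{\bu}$ and $M^{\bv}$ are $(\mathscr{F}_t)$-martingales and then applies optional sampling at the regeneration times; this is more modular (your domination $\sup_{t\leq T_{l+1}}|N_t|\leq|Z_0|+2T_{l+1}\in L^1$ replaces the paper's interchange of expectation with the double sum), at the cost of the extra checks you rightly flag: that $T_l$ is an $(\mathscr{F}_t)$-stopping time, that $Z_l=N_{T_l}$ is $\mathscr{F}_{T_l}$-measurable, and that the continuation above level $c+s$ reads only fresh uniforms in slices symmetric about the current vertex. For that last point you can shortcut the step-by-step conditioning on vertical increments: reflecting all uniforms at heights above $c+s$ about the vertical line through $\mathtt{u}_{\kappa^{\bu}(s)}$ is measure preserving, leaves $\mathscr{F}_s$ and all vertical increments (hence $\kappa^{\bu}(t)$) unchanged, and negates $M^{\bu}_t-M^{\bu}_s$, giving the conditional centering in one stroke. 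The alternative you mention at the end, reducing to a single regeneration block via the renewal structure, is essentially the paper's own argument.
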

\begin{proof}
By translation invariance, we first consider $ \bu = \mathbf{0}$. To simplify notation, throughout this proof, $h_n$ and $\bar{h}_n$ will denote the vertices $h_n(\mathbf{0})$ and $h_n(\mathbf{0})(1)$  respectively. It can be easily proved that  $ \bar{h}_{\tau_{l}} $ is $ \mathscr{F}_{T_{l}} $-measurable.
Also, the construction of our model ensures that $ \vert h_{1}(\bw)(1)-\bw(1)\vert\leq h_{1}(\bw)(2)-\bw(2) $ for every $\bw \in \Z^2$, so invoking the  random variables introduced in the  proof of Proposition \ref{ppppp}, we have
$
\vert \bar{h}_{\tau_{l}}\vert   \leq  \sum_{n=1}^{\tau_l} J_n $.
So, Lemma \ref{Lemma 2.7 of RSS} implies that $ \E[\vert\bar{h}_{\tau_{l}}\vert]<\infty $  for every $ l\geq {\text 0}$.

Finally, for any $ A\in\mathscr{F}_{T_{l}} $, and taking $ \mathcal{G}_{n}:= \mathscr{F}_{h_{n}(2)}$, we have
\begin{align*}
&\E\Big[\mathbf{1}(A)\big(\bar{h}_{\tau_{l+1}}-\bar{h}_{\tau_{l}}\big)\Big]
\\&=\E\Big[\mathbf{1}(A)\sum_{n=l}^{\infty}\sum_{m=1}^{\infty}\Big\lbrace \mathbf{1}(\{\tau_{l}=n\})\mathbf{1}(\{\tau_{l+1}=n+m\})\sum_{i=1}^{m}\big(\bar{h}_{n+i}-\bar{h}_{n+i-1}\big)\Big\rbrace\Big]
\\&=
\sum_{n=l}^{\infty}\sum_{m=1}^{\infty}\E\Big[\E\Big[\mathbf{1}(A) \mathbf{1}(\{\tau_{l}=n\})\mathbf{1}(\{\tau_{l+1}\geq n+m\})
 \big(\bar{h}_{n+m}- \bar{h}_{n+m-1}\big)
~\Big |~\mathcal{G}_{n+m-1}\Big]\Big].
\end{align*}
Noting that $ \mathbf{1}(A)\mathbf{1}(\{\tau_{l}=n\})\mathbf{1}(\{\tau_{l+1}\geq n+m\}) $ is $ \mathcal{G}_{n+m-1} $-measurable,  $ \bar{h}_{n+m}- \bar{h}_{n+m-1} $ is independent of $ \mathcal{G}_{n+m-1} $ and that the increment of  $\{\bar{h}_n:n\geq 0\}$ is symmetric, we have
$ \E\big[\bar{h}_{\tau_{l+1}}-\bar{h}_{\tau_{l}} ~\big |~\mathscr{F}_{T_{l}}\big]=0 $ almost surely. Therefore for each $ \bu ,\bv\in\Zz $ with $ \bu(2)=\bv(2) $, and for any $ l\geq 0 $,
\begin{align*}
&\E\big[Z_{l+1}(\bu ,\bv)-Z_{l}(\bu ,\bv) ~\big |~\mathscr{F}_{T_{l}}\big]\\&
=\E\big[h_{\tau_{l+1}}(\bu)(1)-h_{\tau_{l+1}}(\bv)(1)-\big(h_{\tau_{l}}(\bu)(1)-h_{\tau_{l}}(\bv)(1)\big) ~\big |~\mathscr{F}_{T_{l}}\big]\\&
=\E\big[h_{\tau_{l+1}}(\bu)(1)-h_{\tau_{l}}(\bu)(1) ~\big |~\mathscr{F}_{T_{l}}\big]-\E\big[h_{\tau_{l+1}}(\bv)(1)-h_{\tau_{l}}(\bv)(1) ~\big |~\mathscr{F}_{T_{l}}\big]=0
\end{align*}
almost surely,
which completes the proof.
\end{proof}

Now let $ \bu ,\bv \in \mathscr{V} $ with $ \bu(2)=\bv(2) $, and  without loss of generality, assume that $ \bu(1)>\bv(1) $. Since the paths 
starting from $ \bu $ and $ \bv $ do not cross each other, the martingale $ \{Z_{l}(\bu ,\bv):l \geq 0\} $ is non-negative. Therefore, by the martingale convergence theorem, there exists a random variable $ Z_{\infty} $ such that
$Z_{l}(\bu ,\bv)  \xrightarrow{a.s.} Z_{\infty}$ as $ l \to \infty$.
Since the Markov chain $ \{Z_{l}(\bu ,\bv):l \geq 0\} $ has $ 0 $ as its only absorbing state, we must have $ Z_{\infty}=0 $ almost surely. Hence, there exists some $ t \geq 0 $ so that $ Z_{l}(\bu ,\bv)=0  $ for all $ l\geq t $ almost surely. 

Next, if $ \bu(2)<\bv(2) $, by the Borel-Cantelli lemma,   we can find two vertices $ \bu^\prime ,\bv^\prime\in\mathscr{V} $ so that $ \bu^\prime(2)=\bv^\prime(2)=\bv(2) $ and
\begin{align*}
&\bu^\prime(1)<\bv(1)-\big(|\bu(1)-\bv(1)|+\bv(2)-\bu(2)\big),\\&
\bv^\prime(1)>\bv(1)+\big(|\bu(1)-\bv(1)|+\bv(2)-\bu(2)\big),
\end{align*}
almost surely. By the non-crossing property of our paths, the paths starting from $ \bu $ and $ \bv $ have to lie between the two paths starting from $ \bu^\prime $ and $ \bv^\prime $ from time $ \bv(2) $ onwards. Since the paths from $ \bu^\prime$ and $\bv^\prime$ must meet  almost surely, so do all paths sandwiched between them. This completes the proof of Theorem \ref{tree}(i) for $d=2$.

\vspace*{5mm}
\noindent \textit{Dimension 3.}
Let  $ \bu , \bv \in \Z^{d} $ be two distinct vertices  with $ \bu(d) = \bv(d) $, and let
$ \{U_{\bw}^{\bu}:\bw\in\Z^{d}\} $ and $ \{U_{\bw}^{\bv}:\bw\in\Z^{d}\} $ be two  independent collections of i.i.d. uniform $ (0,1) $ random variables. We  construct two independent paths $ \{\h_{n}(\bu):n \geq 1\} $  from $ \bu $  and  $ \{\h_{n}(\bv):n \geq 1\} $ from $ \bv $, using $ \{U_{\bw}^{\bu}:\bw\in\Z^{d}\} $ and $ \{U_{\bw}^{\bv}:\bw\in\Z^{d}\} $  respectively. Denoting the $ l $-th simultaneous renewal time of these two independent paths by $ \T_{l}(\bu ,\bv) $, we note that  $ \{\T_{l+1}(\bu ,\bv)-\T_{l}(\bu ,\bv):l \geq 0\} $ is a sequence of i.i.d. positive integer valued random variables. Also for any simultaneous regeneration time $ \T_{l}(\bu ,\bv) $, there exist non-negative integer valued random variables $ N_{l}(\bu) $ and $ N_{l}(\bv) $, each of which are functions of both $ \bu $ and $ \bv $, so that
\[\T_{l}(\bu ,\bv)=\h_{N_{l}(\bu)}(\bu)(d)=\h_{N_{l}(\bv)}(\bv)(d).\]
Taking
\begin{align*}
&R_{n}^{(\bx)}:=\inf\{\h_{k}(\bx)(d)-\bx(d):k \geq 1 ,\h_{k}(\bx)(d)-\bx(d)\geq n\}-n,~~~~\bx\in\{\bu ,\bv\},
\end{align*}
we have 
\begin{align*}
\T_{1}(\bu ,\bv)-\T_{0}(\bu ,\bv):=\inf\{n \geq 1 : R_{n}^{(\bu)}=R_{n}^{(\bv)}=0\}.
\end{align*}

Now we invoke the following result.

 \begin{lemma}\textup{\textbf{(Lemma 3.2 of \cite{Roy2016})}}
Let $ \{\xi_{n}^{(1)}:n\geq 1\} $ and $ \{\xi_{n}^{(2)}:n\geq 1\} $ be two independent collections of i.i.d. positive integer valued random variables with $ \max\{\P(\xi_{1}^{(1)}\geq m),\P(\xi_{1}^{(2)}\geq m)\}\leq C_{5,1}\exp\{-C_{6,1}m\} $ for any $ m\geq 1 $ and  positive constants $ C_{5,1},C_{6,1} $. Also, let $ \min\{\P(\xi_{1}^{(1)}=1),\P(\xi_{1}^{(2)}=1)\}>0 $. For $ i=1,2 $ and $ k,n\geq 1 $,  let $ S_{k}^{(i)}:=\sum_{j=1}^{k}\xi_{j}^{(i)} $ and
 $R_{n}^{(i)}:=\inf\{S_{k}^{(i)}:k\geq 1 ,S_{k}^{(i)}\geq n\}-n$.
Taking $ \upsilon^{R}:=\inf\{n\geq 1 :R_{n}^{(1)}=R_{n}^{(2)}=0\} $,  we have
\begin{align*}
\P\{\upsilon^{R}\geq m\}\leq C_{5,2}\exp\{-C_{6,2}m\},~~~~m\geq 1 ,
\end{align*}
where $ C_{5,2} $ and $ C_{6,2} $ are some positive constants, depending only on the distributions of $ \xi_{n}^{(i)} $'s.
\end{lemma}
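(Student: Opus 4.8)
\medskip
\noindent\emph{Proof proposal.}\quad The plan is to reduce the statement to the analysis of a one-dimensional Markov chain and then to an estimate for a random walk evaluated at a stopping time. A level that is a renewal epoch of both processes is in particular a renewal epoch of the first one, so
\[
\upsilon^{R}=S_{\kappa}^{(1)},\qquad \kappa:=\inf\{k\ge 1:\ \rho_{k}=0\},\qquad \rho_{k}:=R_{S_{k}^{(1)}}^{(2)},
\]
where $\rho_{k}$ is the overshoot of the second renewal process seen from the $k$-th renewal epoch of the first. Since $\{\xi_{n}^{(1)}\}$ and $\{\xi_{n}^{(2)}\}$ are independent and each i.i.d., conditionally on $\mathcal{G}_{k}:=\sigma\big(\xi_{1}^{(1)},\dots,\xi_{k}^{(1)};\ \xi_{j}^{(2)}\ \text{with}\ S_{j-1}^{(2)}\le S_{k}^{(1)}\big)$ the second process beyond $S_{k}^{(1)}$ first decreases its overshoot by one for $\rho_{k}$ steps and then restarts afresh; hence $\{\rho_{k}\}_{k\ge 0}$ is a Markov chain on $\{0,1,2,\dots\}$, $\kappa$ is a $\{\mathcal{G}_{k}\}$-stopping time, $\{S_{k}^{(1)}\}$ is a random walk adapted to $\{\mathcal{G}_{k}\}$, and $\xi_{k+1}^{(1)}$ is independent of $\mathcal{G}_{k}$. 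I will show \textbf{(A)} that $\kappa$ has an exponential tail, and \textbf{(B)} deduce the same for $S_{\kappa}^{(1)}=\upsilon^{R}$.

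For \textbf{(A)} I first record the elementary uniform bound $\P\{R_{n}^{(2)}\ge\ell\}\le C'\exp\{-C_{6,1}\ell\}$ for every $n\ge 0$: if the last renewal of the second process strictly before $n$ lies at $n-r$ (with $r\ge 1$), then $R_{n}^{(2)}\ge\ell$ forces the next, fresh, increment to be at least $\ell+r$, so a union bound over $r$ together with the tail hypothesis on $\xi^{(2)}$ gives the claim; since $S_{k}^{(1)}$ depends only on the first family, $\P\{\rho_{k}\ge\ell\}\le C'\exp\{-C_{6,1}\ell\}$ uniformly in $k$. Next I set up a geometric Lyapunov drift for $\{\rho_{k}\}$ with $V(r)=e^{\beta r}$ and $0<\beta<C_{6,1}$: from $\rho_{k}=r$ the chain moves to $r-s$ if $\xi_{k+1}^{(1)}=s\le r$ and to a fresh overshoot at distance $s-r$ if $s>r$, so, using $\sup_{j\ge 0}\E[\,e^{\beta R_{j}^{(2)}}\,]=:C^{*}<\infty$ (from the uniform bound) and $\E[\,e^{-\beta\xi^{(1)}}\,]=:a<1$ (as $\xi^{(1)}\ge 1$), one obtains
\[
\E\big[V(\rho_{k+1})\,\big|\,\rho_{k}=r\big]\ \le\ a\,e^{\beta r}+C^{*}C_{5,1}\,e^{-C_{6,1}r}\ \le\ \lambda\,V(r)+K\,\mathbf{1}\{r\le r_{0}\}
\]
for suitable $\lambda\in(a,1)$, $K<\infty$, and a \emph{finite} set $C:=\{0,1,\dots,r_{0}\}$; standard drift/regeneration theory for geometrically ergodic chains then yields an exponential tail, uniform over starting states with bounded $V$, for the hitting time $\sigma_{C}$ of $C$. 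Finally, from any $r\in C$ the chain reaches $0$ within $r\le r_{0}$ steps with probability at least $p_{1}^{\,r}\ge p_{1}^{\,r_{0}}>0$, where $p_{1}:=\P\{\xi^{(1)}=1\}$: take $\xi_{k+1}^{(1)}=\dots=\xi_{k+r}^{(1)}=1$, so that $\rho$ descends by one at each step down to $0$ (this is where the hypothesis $\P\{\xi^{(1)}=1\}>0$ is used). Interleaving the exponentially short excursions away from $C$ with these ``ladder'' attempts, each of which, given the past, succeeds with probability at least $p_{1}^{\,r_{0}}$, the usual regeneration argument gives $\P\{\kappa\ge m\}\le\bar C\exp\{-\bar c\,m\}$.

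For \textbf{(B)}, write $\upsilon^{R}=S_{\kappa}^{(1)}=\sum_{k=1}^{\kappa}\xi_{k}^{(1)}$ and, with $\phi(\gamma):=\E[\,e^{\gamma\xi^{(1)}}\,]$ (finite for $\gamma<C_{6,1}$), use the telescoping identity
\[
e^{\gamma S_{\kappa}^{(1)}}\ =\ 1+\sum_{\ell\ge 1}\mathbf{1}\{\kappa\ge\ell\}\,e^{\gamma S_{\ell-1}^{(1)}}\big(e^{\gamma\xi_{\ell}^{(1)}}-1\big).
\]
Since $\{\kappa\ge\ell\}\in\mathcal{G}_{\ell-1}$ and $e^{\gamma S_{\ell-1}^{(1)}}$ is $\mathcal{G}_{\ell-1}$-measurable while $\xi_{\ell}^{(1)}$ is independent of $\mathcal{G}_{\ell-1}$, taking expectations and applying Cauchy--Schwarz gives
\[
\E[\,e^{\gamma\upsilon^{R}}\,]\ \le\ 1+(\phi(\gamma)-1)\sum_{\ell\ge 1}\P\{\kappa\ge\ell\}^{1/2}\,\phi(2\gamma)^{(\ell-1)/2},
\]
and the series converges as soon as $\phi(2\gamma)\,e^{-\bar c}<1$, which holds for $\gamma>0$ small by continuity of $\phi$ at $0$; Markov's inequality then yields $\P\{\upsilon^{R}\ge m\}\le C_{5,2}\exp\{-C_{6,2}m\}$, and the argument is symmetric in the two processes. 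The main difficulty I anticipate is step \textbf{(A)}: establishing geometric ergodicity of $\{\rho_{k}\}$ while the state $0$ that one actually wants to hit sits \emph{inside} the small set $C$, and controlling the overlap in the randomness of the $\xi^{(1)}$'s shared by successive ladder attempts. Both the reduction to the one-dimensional chain $\{\rho_{k}\}$ and the stopping-time decomposition in \textbf{(B)} (rather than any spurious independence of $\kappa$ from the summands) are what make the estimate go through.
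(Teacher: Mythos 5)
The paper does not prove this lemma: it is imported verbatim as Lemma 3.2 of \cite{Roy2016}, so there is no internal proof to compare you against, and I assess your argument on its own terms; it is correct. The reduction $\upsilon^{R}=S^{(1)}_{\kappa}$ via the overshoot chain $\rho_{k}=R^{(2)}_{S^{(1)}_{k}}$ is exactly right, and $\{\rho_{k}\}$ is indeed Markov for your filtration (the straddling increment of the second family determines $\rho_{k}$, and everything beyond the corresponding renewal, as well as $\xi^{(1)}_{k+1}$, is fresh). The uniform exponential tail of the overshoot, the geometric drift for $V(r)=e^{\beta r}$ with $0<\beta<C_{6,1}$, and the descent from $\{0,\dots ,r_{0}\}$ by a run of increments $\xi^{(1)}=1$ are all sound; note that you only use $\P\{\xi^{(1)}_{1}=1\}>0$, so you in fact prove the statement under a weaker hypothesis than the one given, which is fine. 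The difficulty you flag about ``overlap of randomness'' between successive ladder attempts is not real: each attempt examines previously unexplored increments $\xi^{(1)}_{k+1},\dots ,\xi^{(1)}_{k+r_{0}}$, so conditionally on $\mathcal{G}_{k}$ it succeeds with probability at least $p_{1}^{r_{0}}$, and the interleaving is the usual geometric-trials argument. That step, and the implication ``geometric drift implies an exponential tail for the hitting time of the small set'' (a supermartingale bound for $\lambda^{-(k\wedge\sigma_{C})}V(\rho_{k\wedge\sigma_{C}})$, plus a uniform bound on $\E[V]$ after a failed attempt), are the only places where you appeal to standard theory rather than writing the estimate out, and both are genuinely standard. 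For your step (B) you could have shortened matters by observing $\upsilon^{R}\le\sum_{k=1}^{\kappa}\xi^{(1)}_{k}$ with $\kappa$ having an exponential moment and quoting Lemma \ref{Lemma 2.7 of RSS}; your telescoping/Cauchy--Schwarz computation is in effect a proof of that lemma in this special case (and needs only the harmless extra restriction $2\gamma<C_{6,1}$).
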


From the lemma, for $ m\geq 1 $,
\begin{align}
\P\big\{\h_{N_{l+1}(\bu)}(\bu)(d)-\h_{N_{l}(\bu)}(\bu)(d)\geq m\big\}&=\P\big\{\h_{N_{l+1}(\bv)}(\bv)(d)-\h_{N_{l}(\bv)}(\bv)(d)\geq m\big\}\nonumber\\
&=\P\big\{\T_{l+1}(\bu ,\bv)-\T_{l}(\bu ,\bv)\geq m\big\}\nonumber\\
&\leq C_{5}\exp\{-C_{6}m\},\label{c5c6}
\end{align}
where $ C_{5} $ and $ C_{6} $ are positive constants depending only on the distribution of $ (\h_{n}(\bu)(d)-\h_{n-1}(\bu)(d)) $'s.

Now we study the displacement in the first $d-1$ coordinates. For $ \bw=(\bw(1),\ldots ,\bw(d)) $, we denote $ \bar{\bw}=(\bw(1),\ldots ,\bw(d-1)) $. For $ l\geq 1 $, let 
 \begin{align*}
&\psi_{l}^{\bu}=\psi_{l}^{\bu}(\bu ,\bv):=\hh_{N_{l}(\bu)}(\bu)-\hh_{N_{l-1}(\bu)}(\bu)=\sum_{t=N_{l-1}(\bu)+1}^{N_{l}(\bu)}\big [ \hh_{t}(\bu)-\hh_{t-1}(\bu)  \big],\\
& \psi_{l}^{\bv}=\psi_{l}^{\bv}(\bu ,\bv):=\hh_{N_{l}(\bv)}(\bv)-\hh_{N_{l-1}(\bv)}(\bv)=\sum_{t=N_{l-1}(\bv)+1}^{N_{l}(\bv)}\big [ \hh_{t}(\bv)-\hh_{t-1}(\bv)  \big].
\end{align*}

For $ d=3 $, as in the Appendix of \cite{Roy2016} (page 1141) we have that if $ \bar{\bu}-\bar{\bv}=\bx\in\Zz $, then
\begin{align}
&\E\big[\Vert(\bar{\bu}+\psi_{1}^{\bu})-(\bar{\bv}+\psi_{1}^{\bv}) \Vert_{2}^{2}-\Vert \bx\Vert_{2}^{2} \big]=\alpha ,\label{4-2}\\
&\E\big[\big(\Vert(\bar{\bu}+\psi_{1}^{\bu})-(\bar{\bv}+\psi_{1}^{\bv}) \Vert_{2}^{2}-\Vert \bx\Vert_{2}^{2} \big)^{2}\big]\geq 2\alpha \Vert \bx\Vert_{2}^{2},\label{4-3}\\
&\E\big[\big(\Vert(\bar{\bu}+\psi_{1}^{\bu})-(\bar{\bv}+\psi_{1}^{\bv}) \Vert_{2}^{2}-\Vert \bx\Vert_{2}^{2} \big)^{3}\big]=O(\Vert \bx\Vert_{2}^{2}) ~~~\text{as~}\Vert \bx\Vert_{2}\rightarrow\infty , \label{4-4}
\end{align}
where $ \alpha $ is some non-negative constant.

Now consider two distinct open vertices $ \bu,\bv\in\Z^{3} $ and  assume that $ \bu(3)=\bv(3) $. We will apply the Foster-Lyapunov criterion (see Proposition 5.3 in Chapter I of \cite{Asmussen})  on the process $ \{Z_{l}(\bu,\bv):l \geq 0\} $ where $Z_{l}(\bu,\bv)$ is as given in (\ref{rZ}).
\begin{proposition}\textup{\textbf{(Foster-Lyapunov Criterion) }}
An irreducible Markov chain with state space $ E \subseteq \Z^{d} $ and stationary transition probability matrix $ (p_{ij})_{i,j\in E} $ is recurrent if there exists a function $ f : E \rightarrow \R $ such that $ f(x)\rightarrow \infty $ as $\Vert x\Vert_{d}\rightarrow\infty$ and
\[ \sum_{k\in E}p_{jk}f(k)\leq f(j) ~~~~~~~~~  \text{for} ~ j\in E_{0},\]
 where $ E_{0} $ is a subset of  $  E $ so that $ E\setminus E_{0} $ is finite.
\end{proposition}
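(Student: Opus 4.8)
This is the classical Foster--Lyapunov (drift) criterion for recurrence, and the plan is to give the standard stopped-supermartingale proof, arguing by contradiction. If $E$ is finite the result is trivial, so assume $E$ is infinite and write $F := E \setminus E_{0}$, which is finite by hypothesis, so that $E_{0} \neq \emptyset$. Since $f(x) \to \infty$ as $\Vert x \Vert_{d} \to \infty$ and $E \subseteq \Z^{d}$, every sublevel set $\{x \in E : f(x) \leq c\}$ is finite; in particular $\inf_{E} f > -\infty$, and after replacing $f$ by $f - \inf_{E} f$ we may assume $f \geq 0$. Suppose, towards a contradiction, that the irreducible chain $\{X_{n}\}$ with transition matrix $(p_{ij})$ is transient.

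The first observation is that transience forces $f(X_{n}) \to \infty$: an irreducible transient chain on a countable state space visits every fixed state, hence every finite set, only finitely often almost surely, from any initial state, and applying this to the finite sets $\{x : f(x) \leq c\}$, $c \in \N$, gives $f(X_{n}) \to \infty$ almost surely. Next comes the stopped supermartingale. Fix $x_{0} \in E_{0}$, let $\tau := \inf\{n \geq 1 : X_{n} \in F\}$, and put $M_{n} := f(X_{n \wedge \tau})$. On $\{\tau > n\}$ we have $X_{n} \in E_{0}$, so the drift inequality gives $\E[f(X_{n+1}) \mid \mathcal{F}_{n}] = \sum_{k \in E} p_{X_{n}k}\, f(k) \leq f(X_{n})$, where $(\mathcal{F}_{n})$ is the natural filtration, while on $\{\tau \leq n\}$ the process is frozen; since $\tau$ is a stopping time, $(M_{n})_{n \geq 0}$ is a non-negative supermartingale and converges almost surely to a finite limit. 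On the event $\{\tau = \infty\}$ one has $M_{n} = f(X_{n})$ for every $n$, so $f(X_{n})$ converges to a finite limit there; combined with the first observation this forces $\P_{x_{0}}(\tau = \infty) = 0$. Thus from every $x_{0} \in E_{0}$ the chain reaches $F$ in finite time almost surely.

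To finish, I would upgrade this to infinitely many visits. From any $y \in F$ the chain also returns to $F$ almost surely: if $X_{1} \in F$ this is immediate, and if $X_{1} \in E_{0}$ then the strong Markov property at time $1$ together with the previous step gives that $F$ is hit almost surely afterwards. Hence $\P_{x}\big(\inf\{n \geq 1 : X_{n} \in F\} < \infty\big) = 1$ for every $x \in E$, and iterating with the strong Markov property shows that $F$ is visited infinitely often almost surely, from every starting state. Since $F$ is finite, some state of $F$ is then visited infinitely often with probability one, which contradicts transience; therefore the chain is recurrent.

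I do not expect a real obstacle here, as the argument is routine once the objects are set up. The only points that need a little care are the reduction to $f \geq 0$ (legitimate precisely because the sublevel sets of $f$ are finite, so $\inf_{E} f$ is attained and finite), the use of the standard fact that an irreducible transient chain leaves every finite set forever, and the bookkeeping in passing from ``hits $F$ once almost surely'' to ``hits $F$ infinitely often almost surely'' via the strong Markov property. The conceptual heart of the proof is simply the observation that transience would force $f(X_{n}) \to \infty$, in direct conflict with the almost-sure convergence of the stopped supermartingale $M_{n}$ on the event $\{\tau = \infty\}$.
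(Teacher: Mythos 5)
Your proof is correct. Note that the paper itself gives no proof of this proposition: it is quoted as a known result (Proposition 5.3 in Chapter I of Asmussen's \emph{Applied probability and queues}), so there is no internal argument to compare against; your blind write-up is essentially the standard proof that such a reference would supply. The structure is sound: the reduction to $f\geq 0$ is legitimate because the sublevel sets $\{x\in E: f(x)\leq c\}$ are finite (so $\inf_E f$ is finite and subtracting it preserves the drift inequality, the rows of the transition matrix summing to $1$); the stopped process $M_n=f(X_{n\wedge\tau})$ with $\tau=\inf\{n\geq 1: X_n\in F\}$ is indeed a non-negative supermartingale because $X_n\in E_0$ on $\{\tau>n\}$; and the contradiction between almost sure convergence of $M_n$ on $\{\tau=\infty\}$ and the fact that transience forces $f(X_n)\to\infty$ (every sublevel set being finite and visited only finitely often) correctly yields $\P_{x_0}\{\tau=\infty\}=0$, after which the strong Markov upgrade to infinitely many visits of the finite set $F$ finishes the argument. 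The only point worth a sentence in a polished version is the degenerate case $F=E\setminus E_0=\emptyset$: there $\tau\equiv\infty$, so the contradiction already appears at the supermartingale step and the final ``visits $F$ infinitely often'' paragraph should be phrased so as not to assume $F\neq\emptyset$. This is a cosmetic matter, not a gap.
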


The Markov chain $ \{Z_{l}(\bu,\bv):l \geq 0\} $ is not irreducible;   the  state $ (0,0)$ being absorbing. Because of this, we modify the Markov process so that it has the same transition probabilities as $ \{Z_{l}(\bu,\bv):l \geq 0\} $ except that instead of $ (0,0) $ being an absorbing state, it goes to $ (1,0) $ with probability 1. With a slight abuse of notation, we denote the modified Markov chain by $ \{Z_{l}(\bu,\bv):l \geq 0\} $ again. Using the Foster-Lyapunov criterion, we show that the Markov chain $ \{Z_{l}(\bu,\bv):l \geq 0\} $ is recurrent. This will prove that the graph $ \mathscr{G} $ is connected. 

To apply the Foster-Lyapunov criterion, consider $ f:\Zz\rightarrow [0,\infty) $ by 
\[ f(\bx)=\sqrt{\ln(1+\Vert \bx\Vert_{2}^{2})} .\]
Also, define a function $ g:[0,\infty) \rightarrow[0,\infty)  $ by $ g(t)=\sqrt{\ln(1+t)} $. In this case, $ f(\bx)=g(\Vert \bx\Vert_{2}^{2}) $. The fourth derivative of $ g $ is  non-positive. Therefore, using Taylor's expansion, we conclude
\begin{align}
&\E\big[f(Z_{1}(\bu ,\bv)) ~\big |~ Z_{0}(\bu ,\bv)=\bx\big]-f(\bx)\nonumber\\
&~=\E\big[g(\Vert Z_{1}(\bu ,\bv)\Vert_{2}^{2})-g(\Vert \bx\Vert_{2}^{2}) ~\big |~ Z_{0}(\bu ,\bv)=\bx\big]\nonumber\\
&~\leq \sum_{k=1}^{3}\dfrac{g^{(k)}(\Vert \bx\Vert_{2}^{2})}{k!}\E\big[\big(\Vert Z_{1}(\bu ,\bv)\Vert_{2}^{2}-\Vert \bx\Vert_{2}^{2}\big)^{k} ~\big |~ Z_{0}(\bu ,\bv)=\bx\big].\label{usetayorbound}
\end{align}
If the paths starting from $ \bu $ and $ \bv $ are independent until their first simultaneous regeneration time, then  
$Z_{1}(\bu ,\bv)$ has the same distribution as $ (\bar{\bu}+\psi_{1}^{\bu})-(\bar{\bv}+\psi_{1}^{\bv}) $. Although we do not have such independence we  couple the joint process and the independent processes to obtain a relation between the moments of $ \Vert Z_{1}(\bu ,\bv)\Vert_{2}^{2}-\Vert \bx\Vert_{2}^{2} $ and $ \Vert(\bar{\bu}+\psi_{1}^{\bu})-(\bar{\bv}+\psi_{1}^{\bv}) \Vert_{2}^{2}-\Vert \bx\Vert_{2}^{2} $ as follows:
\begin{proposition}
For any $ \bx\in\Zz\setminus\{(0,0)\} $ and $ k \geq 1 $, we have 
\begin{align*}
&\E \big[\big(\Vert Z_{1}(\bu ,\bv)\Vert_{2}^{2}-\Vert \bx\Vert_{2}^{2}\big)^{k} ~\big |~ Z_{0}(\bu ,\bv)=\bx\big]\\
&~\leq\E\big[\big(\Vert(\bar{\bu}+\psi_{1}^{\bu})-(\bar{\bv}+\psi_{1}^{\bv}) \Vert_{2}^{2}-\Vert \bx\Vert_{2}^{2} \big)^{k}\big] + C_{7}^{(k)} \Vert \bx\Vert_{2}^{2k}\exp\{-C_{8} \Vert \bx\Vert_{2}\},
\end{align*}
where $ C_{7}^{(k)} $ and $ C_{8} $ are some positive constants depending on the distribution of $ (\h_{n}(\bu)(3)-\h_{n-1}(\bu)(3)) $'s, and $ C_{7}^{(k)} $ depends on  $ k $ too.
\end{proposition}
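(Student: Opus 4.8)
Fix $\bu,\bv\in\Z^{3}$ with $\bu(3)=\bv(3)$ and $\bar{\bu}-\bar{\bv}=\bx\neq(0,0)$, and write $W:=(\bar{\bu}+\psi_{1}^{\bu})-(\bar{\bv}+\psi_{1}^{\bv})=\hh_{N_{1}(\bu)}(\bu)-\hh_{N_{1}(\bv)}(\bv)$ for the displacement of the two \emph{independent} paths at their first simultaneous renewal. The plan is to realise, on one probability space, the joint process $\mathcal Z$ started from $(\bu,\bv)$ together with the independent pair $\{\h_{n}(\bu)\}$, $\{\h_{n}(\bv)\}$, coupled so that $Z_{1}(\bu,\bv)=W$ on a ``good'' event $G$ whose complement $I$ has $\P(I)\le C\exp\{-C_{8}\Vert\bx\Vert_{2}\}$. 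Granting such a coupling, I would split
\[
\E\big[(\Vert Z_{1}(\bu,\bv)\Vert_{2}^{2}-\Vert\bx\Vert_{2}^{2})^{k}\mid Z_{0}=\bx\big]
=\E\big[(\Vert W\Vert_{2}^{2}-\Vert\bx\Vert_{2}^{2})^{k}\big]
-\E\big[(\Vert W\Vert_{2}^{2}-\Vert\bx\Vert_{2}^{2})^{k}\mathbf{1}_{I}\big]
+\E\big[(\Vert Z_{1}(\bu,\bv)\Vert_{2}^{2}-\Vert\bx\Vert_{2}^{2})^{k}\mathbf{1}_{I}\big],
\]
and bound the absolute values of the last two terms by $C_{7}^{(k)}\Vert\bx\Vert_{2}^{2k}\exp\{-C_{8}\Vert\bx\Vert_{2}\}$.

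\noindent\textbf{The coupling and the good event.} Let $\mathcal E^{\bu}$ (resp.\ $\mathcal E^{\bv}$) be the set of vertices whose uniform labels are read while constructing $\{\h_{n}(\bu)\}$ up to $\h_{N_{1}(\bu)}(\bu)$ (resp.\ $\{\h_{n}(\bv)\}$ up to $\h_{N_{1}(\bv)}(\bv)$); writing $\T_{i}=\T_{i}(\bu,\bv)$, both sets lie in levels $\le\bu(3)+(\T_{1}-\T_{0})$. Since a path moves at most one unit per level in each of the first two coordinates and, when exploring from a vertex $j$ levels below the stopping level, consults only labels within $\ell_{1}$-distance $j$ of that vertex, every vertex of $\mathcal E^{\bu}$ lies within $\ell_{1}$-distance $\T_{1}-\T_{0}$ of $\bar{\bu}$, and similarly for $\mathcal E^{\bv}$ and $\bar{\bv}$. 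Hence on $G:=\{\T_{1}-\T_{0}<\Vert\bx\Vert_{2}/2\}$ one has $\mathcal E^{\bu}\cap\mathcal E^{\bv}=\emptyset$, so I can feed the joint construction the labels $U^{\bu}_{\bw}$ on $\mathcal E^{\bu}$, the labels $U^{\bv}_{\bw}$ on $\mathcal E^{\bv}$, and fresh independent uniforms elsewhere (using, as in the proof of Theorem~\ref{beingMP}, that the law of a single-path exploration run to a stopping level depends only on the labels it reads). With this choice the joint process reproduces $\h(\bu)$ and $\h(\bv)$ up to level $\bu(3)+(\T_{1}-\T_{0})$, its first renewal $\tau_{1}$ occurs exactly there (the first level visited by both paths), and therefore $Z_{1}(\bu,\bv)=W$ on $G$. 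Finally, \eqref{c5c6} with $l=0$ (so $\T_{0}$ is the starting level) gives $\P(I)=\P\{\T_{1}-\T_{0}\ge\Vert\bx\Vert_{2}/2\}\le C_{5}\exp\{-C_{6}\Vert\bx\Vert_{2}/2\}$.

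\noindent\textbf{Bounding the error terms.} Because both paths advance at most one unit per level in each of the two coordinates, $\Vert Z_{1}(\bu,\bv)-\bx\Vert_{2}\le2T_{1}(\bu,\bv)$ and $\Vert W-\bx\Vert_{2}\le2(\T_{1}-\T_{0})$, whence $\big|\Vert Z_{1}(\bu,\bv)\Vert_{2}^{2}-\Vert\bx\Vert_{2}^{2}\big|\le4T_{1}(\bu,\bv)\Vert\bx\Vert_{2}+4T_{1}(\bu,\bv)^{2}$ and likewise for $W$. By Proposition~\ref{ppppp} (for $T_{1}(\bu,\bv)$) and \eqref{c5c6} (for $\T_{1}-\T_{0}$) these renewal times have exponentially decaying tails, hence finite moments of every order, so $\E[(\Vert Z_{1}(\bu,\bv)\Vert_{2}^{2}-\Vert\bx\Vert_{2}^{2})^{2k}]$ and $\E[(\Vert W\Vert_{2}^{2}-\Vert\bx\Vert_{2}^{2})^{2k}]$ are both $O(\Vert\bx\Vert_{2}^{2k})$ for $\Vert\bx\Vert_{2}\ge1$; a Cauchy--Schwarz estimate against $\mathbf{1}_{I}$ then bounds each error term by $C\Vert\bx\Vert_{2}^{k}\,\P(I)^{1/2}\le C_{7}^{(k)}\Vert\bx\Vert_{2}^{2k}\exp\{-C_{8}\Vert\bx\Vert_{2}\}$ with $C_{8}:=C_{6}/4$, which completes the argument.

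\noindent\textbf{Main obstacle.} The delicate point is the coupling: one must make precise what ``the two explorations do not interact'' means, verify that before the first simultaneous renewal the joint process genuinely reads two disjoint (hence, under the coupling, independent) families of labels, and check that the first renewal $\tau_{1}$ of the joint process coincides with the first level visited by both independent paths rather than occurring earlier. Once the coupling is in place the moment estimates and the Cauchy--Schwarz step are routine, given Proposition~\ref{ppppp} and \eqref{c5c6}.
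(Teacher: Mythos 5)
Your overall strategy is the same as the paper's: realise the joint process and the independent pair on one probability space, note that $\big(\Vert Z_{1}\Vert_{2}^{2}-\Vert \bx\Vert_{2}^{2}\big)^{k}$ and $\big(\Vert W\Vert_{2}^{2}-\Vert \bx\Vert_{2}^{2}\big)^{k}$ coincide off a ``bad'' event whose probability decays like $\exp\{-c\Vert\bx\Vert_{2}\}$ (via \eqref{c5c6} applied to $\T_{1}-\T_{0}$), and absorb the discrepancy by uniform moment bounds on the renewal heights plus Cauchy--Schwarz. Your error-term estimates ($\Vert Z_{1}-\bx\Vert_{2}\leq 2T_{1}$, $\Vert W-\bx\Vert_{2}\leq 2(\T_{1}-\T_{0})$, finite moments uniform in $\bx$) are exactly the kind of bounds the paper uses, and that part of your argument is sound.

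The genuine gap is the coupling itself, which you correctly flag as the delicate point but do not resolve, and as written it would fail. You propose to build the hybrid label family by using $U^{\bu}_{\bw}$ on the explored set $\mathcal{E}^{\bu}$, $U^{\bv}_{\bw}$ on $\mathcal{E}^{\bv}$, and fresh uniforms elsewhere. But $\mathcal{E}^{\bu}$ and $\mathcal{E}^{\bv}$ are random and determined by the very labels being copied: conditional on the exploration, the labels on $\mathcal{E}^{\bu}$ are heavily biased (all examined levels closed except the selected open vertex), so the pieced-together family is \emph{not} an i.i.d.\ uniform field, and the ``joint process'' you run on it does not obviously have the law of $\mathcal{Z}$; worse, off your good event $G$ the hybrid family is not even well defined (the explored sets may overlap), so conditioning on $G$ cannot be waved away by the remark about Theorem \ref{beingMP}, which concerns a single path. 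The paper's fix is precisely the missing idea: match the two label families on the \emph{deterministic} disjoint cones $V(\bu,\lfloor r\rfloor)$ and $V(\bv,\lfloor r\rfloor)$ with $r=\Vert\bar{\bu}-\bar{\bv}\Vert_{1}/3$, and use an independent field elsewhere. Because the regions are deterministic and disjoint, the resulting family $\{\tilde{U}_{\bw}\}$ is manifestly i.i.d.\ uniform, so the joint process built from it has the correct law; and on the event $\{\h_{N_{1}(\bu)}(\bu)(3)\leq r\}$ (your $G$, with threshold $r$ instead of $\Vert\bx\Vert_{2}/2$) both independent explorations stay inside their cones, so the joint process replicates them and $Z_{1}(\bu,\bv)=W$ there. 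With that substitution the rest of your argument goes through and is essentially the paper's proof.
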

\begin{proof}
By the translation invariance of our model, it suffices to prove the result for $ \bu=(\bx(1),\bx(2),0) $ and $ \bv=(0,0,0) $. 
Let $ r=\Vert \bar{\bu}-\bar{\bv}\Vert_{1}/3=(\vert \bx(1)\vert+\vert \bx(2)\vert)/3 $. Recall that to construct the independent paths, we use the collections  $ \{U_{\bw}^{\bu}:\bw\in\Z^{3}\} $ and $ \{U_{\bw}^{\bv}:\bw\in\Z^{3}\} $. We now consider another  collection of i.i.d. uniform $ (0,1) $ random variables $ \{U^{\prime}_{\bw}:\bw\in\Z^{3}\} $,  independent of all other random variables, and define a new collection of uniform random variables $ \{\tilde{U}_{\bw}:\bw\in\Z^{3}\} $ by
\begin{align*}
\tilde{U}_{\bw}:= \left\{
\begin{array}{rl}
U_{\bw}^{\bu}, & \text{if } \bw\in V(\bu ,\lfloor r\rfloor),\\
U_{\bw}^{\bv}, & \text{if } \bw\in V(\bv ,\lfloor r\rfloor),\\
U^{\prime}_{\bw}, & \text{otherwise}. 
\end{array} \right.
\end{align*}
Using this collection, we construct the joint path $ \{(h_{n}(\bu),h_{n}(\bv)):n \geq 0\} $ starting from  $ (\bu,\bv) $ until their first simultaneous regeneration time $ T_{1} $ at step $ \tau_{1} $. We observe that 
\[\Vert Z_{1}(\bu,\bv)\Vert_{2}\leq\Vert Z_{1}(\bu,\bv)-\bar{\bu}\Vert_{1}+\Vert\bar{\bu}\Vert_{2}\leq 4h_{\tau_{1}}(\bu)(3)+\Vert\bx\Vert_{2},\]
and
\[ \Vert \psi_{1}^{\bu}-\psi_{1}^{\bv}\Vert_{2}\leq \Vert \psi_{1}^{\bu}\Vert_{1}+\Vert\psi_{1}^{\bv}\Vert_{1}\leq 4\h_{N_{1}(\bu)}(\bu)(3) .\]
Now, define the event 
\[A(r):=\{\h_{N_{1}(\bu)}(\bu)(3)>r\}.\]
 An argument as in Proposition \ref{ppppp} yields that  $ \h_{N_{1}(\bu)}(\bu)(3) $, like $ h_{\tau_{1}}(\bu)(3) $, has exponentially decaying tail probabilities with a tail bound which does not depend on $ \bu $ or $ \bv $. Hence, we have
\begin{align*}
&\E \Big[\big(\Vert Z_{1}(\bu ,\bv)\Vert_{2}^{2}-\Vert \bx\Vert_{2}^{2}\big)^{k}-\big(\Vert(\bar{\bu}+\psi_{1}^{\bu})-(\bar{\bv}+\psi_{1}^{\bv}) \Vert_{2}^{2}-\Vert \bx\Vert_{2}^{2} \big)^{k}\Big] \\
&~=\E \Big[\Big(\big(\Vert Z_{1}(\bu ,\bv)\Vert_{2}^{2}-\Vert \bx\Vert_{2}^{2}\big)^{k}-\big(\Vert(\bar{\bu}+\psi_{1}^{\bu})-(\bar{\bv}+\psi_{1}^{\bv}) \Vert_{2}^{2}-\Vert \bx\Vert_{2}^{2} \big)^{k}\Big)\mathbf{1}(A(r))\Big] \\
&~\leq2^{k}\E \Big[\Big(\Vert Z_{1}(\bu ,\bv)\Vert_{2}^{2k}+(2^{2k}+2)\Vert \bx\Vert_{2}^{2k}+2^{2k}\Vert \psi_{1}^{\bu}-\psi_{1}^{\bv} \Vert_{2}^{2k}\Big)\mathbf{1}(A(r))\Big] \\
&~\leq2^{7k}\Big(\E\Big[\big(h_{\tau_{1}}^{2k}(\bu)(3)+\Vert \bx\Vert_{2}^{2k}+(\h_{N_{1}(\bu)}(\bu)(3))^{2k}\big)^{2}\Big]\P\big\{\h_{N_{1}(\bu)}(\bu)(3)>r\big\}\Big)^{\frac{1}{2}} \\
&~\leq2^{7k}\times 3\Big(\E [h_{\tau_{1}}^{4k}(\bu)(3)]+\E[(\h_{N_{1}(\bu)}(\bu)(3))^{4k}]\Big)^{\frac{1}{2}}
\Vert \bx\Vert_{2}^{2k}C_{5}^{\frac{1}{2}}\exp\{-C_{6}\Vert \bx\Vert_{2}/6\},
\end{align*}
where we have used Cauchy-Schwarz inequality in the penultimate line and inequality \eqref{c5c6} in the last line above. This establishes the proposition.
\end{proof}

We now return to the relation \eqref{usetayorbound}. The above proposition implies that
\begin{align*}
&\E\big[f(Z_{1}(\bu ,\bv)) ~\big |~ Z_{0}(\bu ,\bv)=\bx\big]-f(\bx)\\
&~\leq \sum_{k=1}^{3}\dfrac{g^{(k)}(\Vert \bx\Vert_{2}^{2})}{k!}\E\big[\big(\Vert(\bar{\bu}+\psi_{1}^{\bu})-(\bar{\bv}+\psi_{1}^{\bv}) \Vert_{2}^{2}-\Vert \bx\Vert_{2}^{2} \big)^{k}\big]\\
&~+ \sum_{k=1}^{3}\dfrac{g^{(k)}(\Vert \bx\Vert_{2}^{2})}{k!} C_{7}^{(k)} \Vert \bx\Vert_{2}^{2k}\exp\{-C_{8} \Vert \bx\Vert_{2}\}.
\end{align*}
For $ \Vert \bx\Vert_{2} $ large enough, from \eqref{4-2}, \eqref{4-3} and \eqref{4-4} we  have
 \begin{align*}
&\sum_{k=1}^{3}\dfrac{g^{(k)}(\Vert \bx\Vert_{2}^{2})}{k!}\E\big[\big(\Vert(\bar{\bu}+\psi_{1}^{\bu})-(\bar{\bv}+\psi_{1}^{\bv}) \Vert_{2}^{2}-\Vert \bx\Vert_{2}^{2} \big)^{k}\big]\\&~
\leq \big[-\alpha\Vert \bx\Vert_{2}^{2}\big(\ln(1+\Vert \bx\Vert_{2}^{2})\big)^{-3/2}\big]/\big(8(1+\Vert \bx\Vert_{2}^{2})^{2}\big).
\end{align*}
Also,
 \begin{align*}
 &\sum_{k=1}^{3}\dfrac{g^{(k)}(\Vert \bx\Vert_{2}^{2})}{k!} C_{7}^{(k)} \Vert \bx\Vert_{2}^{2k}\exp\{-C_{8} \Vert \bx\Vert_{2}\}\\&~
\leq 2\max\{C_{7}^{(1)},C_{7}^{(3)}\}\exp\{-C_{8} \Vert \bx\Vert_{2}\}.
\end{align*}
Therefore, 
\begin{align*}
\E\big[f(Z_{1}(\bu ,\bv)) ~\big |~ Z_{0}(\bu ,\bv)=\bx\big]-f(\bx)\leq 0,
\end{align*}
for $ \Vert \bx\Vert_{2} $ large. This completes the proof of Theorem \ref{tree}(i) for dimension 3.

\subsection{Proof of Theorem \ref{tree}(ii)}
Suppose $ d\geq 4 $. By the ergodicity inherent in our model to establish Theorem \ref{tree}(ii) it is suffices to show
\begin{align}\label{a-dim4}
 \P\{\mathscr{G}\text{ has at least }m\text{ distinct trees}\}>0 \text{ for all } m\geq 2.
\end{align} 
 The proof is adapted from  \cite{Gangopadhyay} although for our model, we have to use the regeneration process. In this it is also different from that of \cite{Athreya}. The idea of the proof is to study the paths from two vertices $\bu$ and $\bv$ between two successive regeneration steps. Suppose at a stage $k\geq 1$, having obtained $h_k(\bu)$ and $h_k(\bv)$ with $h_k(\bu)$ below $h_k(\bv)$ (say), the region explored by $h_k(\bu)$  to obtain $h_{k+1}(\bu)$ has empty intersection with the history set $\Delta_k$ present at that stage. We show that this happens with a positive probability for every stage $k$. Hence for all practical purposes, the path obtained from $\bu$ is independent of the path obtained from $\bv$. Thinking of 
$\{h_{\tau_l}(\bu) : l \geq 1\}$ and $\{h_{\tau_l}(\bv) : l \geq 1\}$ as two random walks in the first $d-1$ coordinates, we see that with a positive probability, the incremental displacements $h_{\tau_{l+1}}(\bu) - h_{\tau_{l}}(\bu)$ and $h_{\tau_{l+1}}(\bv) - h_{\tau_{l}}(\bv)$ in the first $d-1$ coordinates are independent and identically distributed for $l \geq 1$. Hence, invoking the transience of random walks for 3 or higher dimensions, we have that, with a positive probability, the paths from $\bu$ and $\bv$ do not meet.

The first step towards this is the following proposition which uses the independence of the process between different joint renewal times of the $m$ paths.
\begin{proposition}\label{prop4.3}
For $ 0<\varepsilon<1/3 $, there exist positive constants $ C_{9} $, $ \beta=\beta(\varepsilon) $  and $ n_{0}\geq 1  $ such that
\begin{align*}\label{prop4.3formula}
\inf_{(\bu,\bv)\in A_{n,\varepsilon }}\P \big\{Z_{n^{4}}(\bu ,\bv)\in D_{n^{2(1+\varepsilon)}}\setminus D_{n^{2(1-\varepsilon)}}~\big |~ \bu,\bv\in\mathscr{V}\big\}\geq 1-C_{9}n^{-\beta}
\end{align*}
for all $ n\geq n_{0} $, where $ D_{r}:=\{\bw\in\Z^{d-1}:\Vert \bw\Vert_{1}\leq r\} $ and $ A_{n,\varepsilon }:=\{(\bu,\bv)\in\Z^{d}\times\Z^{d}:\bu(d)=\bv(d),\bar{\bu}-\bar{\bv}\in D_{n^{1+\varepsilon}}\setminus D_{n^{1-\varepsilon}}\} $.
\end{proposition}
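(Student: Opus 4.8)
The plan is to exploit the regeneration structure established in Section~\ref{s2} together with the independence that holds between the two paths, with high probability, during any single regeneration block (the mechanism described in the paragraph preceding Proposition~\ref{prop4.3}). The key random variables are the renewal times $\tau_l$ of the joint process started from $(\bu,\bv)$, and the displacements in the first $d-1$ coordinates $Z_l(\bu,\bv)=\bar h_{\tau_l}(\bu)-\bar h_{\tau_l}(\bv)$, which form a Markov chain on $\Z^{d-1}$. By translation invariance I may assume $\bv=\mathbf 0$ and $\bar\bu=\bx$ with $\|\bx\|_1\in(n^{1-\varepsilon},n^{1+\varepsilon}]$. First I would show that on a good event of probability at least $1-C n^{-\beta}$, the number of renewals $L_n$ occurring before the $(d)$-th coordinate has advanced by a suitable deterministic height $h_n$ satisfies $L_n \asymp h_n$ with $h_n$ chosen so that $Z_{n^4}(\bu,\bv)$ records roughly $n^4$ i.i.d.-like increments; more precisely I want to arrange things so that the $n^4$ renewal steps of the $Z$-chain behave, up to a small-probability error, like $n^4$ steps of a mean-zero, finite-variance random walk on $\Z^{d-1}$ whose increments are the $\psi_1^{\bu}-\psi_1^{\bv}$ of the independent construction.

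The heart of the argument is a coupling: at each renewal step, with probability bounded below by some $q=q(\varepsilon)$ (uniformly, as long as the two current base points are horizontally separated by at least, say, $n^{1-\varepsilon}/2$), the region explored to produce the next increment is disjoint from the current history region $\Delta$, so that step's increment is an independent copy of the ``free'' increment. Using Proposition~\ref{ppppp} (exponential tails of $\sigma_l$ and $T_{l+1}-T_l$) I would first control $T_{n^4}$ and reduce everything to a statement about the $Z$-chain after $n^4$ renewals. Then, via a Doob-type maximal inequality for the martingale part (recall $\{Z_l\}$ is a martingale in dimension~$2$; for general $d$ one uses that the coordinatewise increments are mean zero by symmetry, exactly as in the proof of Theorem~\ref{rmart}) together with an $L^2$ bound $\E\|Z_{n^4}-\bx\|_2^2 = O(n^4)$, I get that with probability $\ge 1-Cn^{-\beta}$ one has $\|Z_{n^4}-\bx\|_2 \le n^{2}\log n$, say; combined with $\|\bx\|_1 \le n^{1+\varepsilon}$ this already forces $Z_{n^4}(\bu,\bv)\in D_{n^{2(1+\varepsilon)}}$ for $n$ large. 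This handles the easy inclusion.

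For the harder lower bound $Z_{n^4}(\bu,\bv)\notin D_{n^{2(1-\varepsilon)}}$, i.e.\ the two paths are \emph{not} too close at time $n^4$, I would argue that the displacement accumulates genuine diffusive spread of order $n^2$. The point is that, conditionally on never having come within horizontal distance $n^{1-\varepsilon}/2$, the increments of the $Z$-chain stochastically dominate (in a suitable bilinear/variance sense) those of the free random walk, which has nondegenerate covariance; so $\|Z_{n^4}\|_2^2$ has mean of order $\|\bx\|_2^2 + n^4 \gtrsim n^4$ and, by an Ito--Tanaka / second-moment argument as in \eqref{4-2}--\eqref{4-4} applied iteratively over the $n^4$ renewal blocks, $\|Z_{n^4}\|_2$ concentrates around $n^2$ up to lower-order fluctuations. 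A Paley--Zygmund (small-ball) estimate then gives $\P\{\|Z_{n^4}\|_2 \le n^{2(1-\varepsilon)}\} \le C n^{-\beta}$. On the complementary event that the paths \emph{do} come within distance $n^{1-\varepsilon}/2$ at some renewal, I either absorb this into the error term (if its probability is already $O(n^{-\beta})$, which a first-passage estimate for the martingale started from $\|\bx\|\ge n^{1-\varepsilon}$ should yield since reaching $0$-order distance from order-$n^{1-\varepsilon}$ in $n^4$ steps is itself a rare event for a diffusive walk in dimension $\ge 3$) or restart the coupling from that point; I expect the former to suffice with $\beta$ small.

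The main obstacle is making the coupling-plus-domination rigorous uniformly over the (large) set $A_{n,\varepsilon}$: one must show both (a) the per-step probability $q$ that the explored region misses the history region is bounded below independently of the configuration, which needs a geometric estimate on $\Delta_k$ (its ``width'' is controlled by $s_k-r_k \le M_k$, which has exponential tails by \eqref{p2}), and (b) that the rare event where this independence fails, or where the paths get anomalously close, contributes only $O(n^{-\beta})$ — this is where one pays by choosing $\varepsilon<1/3$, since the various polynomial error exponents (coming from Chebyshev/Paley--Zygmund against a diffusive scale, and from exponential-tail sums of order $n^4$ terms via Lemma~\ref{Lemma 2.7 of RSS}) all involve powers of $n^{1-\varepsilon}$ versus $n^2$, and $\varepsilon<1/3$ keeps them on the right side. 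Getting a single clean $\beta=\beta(\varepsilon)>0$ out of these competing bounds, rather than just ``$o(1)$'', will require some care with the bookkeeping.
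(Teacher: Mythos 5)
There are genuine gaps, and they sit exactly where the paper's proof does its real work. First, your decoupling mechanism is set at the wrong scale and with the wrong quantifier. You ask for independence of an increment only ``with probability bounded below by some $q$'' whenever the two current points are separated by at least $n^{1-\varepsilon}/2$; but to conclude anything about $Z_{n^4}$ you need the decoupling to hold simultaneously over all $n^{4}$ renewal blocks with probability $1-O(n^{-\beta})$, and a per-block probability $q<1$ gives only $q^{n^4}$. Worse, the event you propose to ``absorb into the error term'' --- the two walks coming within horizontal distance $n^{1-\varepsilon}/2$ at some renewal before step $n^4$ --- is \emph{not} rare: a diffusive walk in $\Z^{d-1}$ run for $n^{4}$ steps from distance of order $n^{1\pm\varepsilon}$ hits a ball whose radius is comparable to its starting distance with probability bounded away from zero (the transience estimate is of order $(\text{radius}/\text{distance})^{d-3}$, which is $\Theta(1)$ here), so neither absorbing it nor the vague ``restart the coupling'' rescues the bound. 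The paper avoids this by running two genuinely independent copies of the process from $\bu$ and $\bv$ and requiring only that, at renewal times, their difference avoids the much smaller ball $D_{K\ln n}$ (Lemma \ref{GRS}(c), which is $O(n^{-\alpha})$ precisely because the avoided radius is logarithmic), and that each inter-renewal height increment is below $K\ln n/(2d)$ (a union bound over $n^{4}$ blocks using the exponential tail \eqref{c5c6}, with $K$ chosen large); on that event the explored regions of the two independent paths are disjoint, so pasting the two independent environments together yields a realization of the \emph{joint} process whose $Z_{n^4}$ equals the difference of the independent walks.

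Second, your treatment of the hard inclusion $Z_{n^4}\notin D_{n^{2(1-\varepsilon)}}$ does not deliver the required rate. Paley--Zygmund gives only a constant lower bound on $\P\{\Vert Z_{n^4}\Vert \ge c\,\E\Vert Z_{n^4}\Vert\}$, and $\Vert Z_{n^4}\Vert_2$ does not concentrate around $n^{2}$ --- it is genuinely diffuse at that scale --- so neither a second-moment argument nor \eqref{4-2}--\eqref{4-4} yields $\P\{Z_{n^4}\in D_{n^{2(1-\varepsilon)}}\}\le Cn^{-\beta}$. What is needed is a quantitative small-ball (and large-deviation) estimate for the $(d-1)$-dimensional walk of renewal increments, uniform over starting points in $D_{n^{1+\varepsilon}}\setminus D_{n^{1-\varepsilon}}$; in the paper this is exactly the content of Lemma \ref{GRS}(a),(b),(c) (Lemma 3.3 of \cite{Gangopadhyay}), imported for the \emph{independent} walks and then transferred to the dependent joint process by the coupling above. (A smaller point: your claim that a Doob/$L^{2}$ bound gives $\Vert Z_{n^4}-\bx\Vert_2\le n^{2}\log n$ with probability $1-Cn^{-\beta}$ is an overreach --- Chebyshev at that threshold gives only $1-C/\log^{2}n$ --- though this is repairable since the target radius $n^{2(1+\varepsilon)}$ leaves a polynomial margin; and the martingale property you invoke is proved in the paper only for $d=2$, so the mean-zero structure in general $d$ would need its own justification.)
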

\begin{proof}
Fix $ 0<\varepsilon<1/3 $ and  $ n \geq 1$. Consider the independent paths starting from  $ \bu ,\bv\in\Z^{d} $ with $ \bu(d)=\bv(d) $, constructed using the collections $ \{U_{\bw}^{\bu}:\bw\in\Z^{d}\} $ and $ \{U_{\bw}^{\bv}:\bw\in\Z^{d}\} $ respectively, and define
\begin{align*}
W_{n,\varepsilon}(\bu ,\bv):=&\Big\{\hh_{N_{n^{4}}(\bu)}(\bu)-\hh_{N_{n^{4}}(\bv)}(\bv)\in D_{n^{2(1+\varepsilon)}}\setminus D_{n^{2(1-\varepsilon)}},\\
&~~\hh_{N_{j}(\bu)}(\bu)-\hh_{N_{j}(\bv)}(\bv)\notin D_{K\ln n}\text{ for all } j=0,1,\ldots ,n^{4}
\Big\},
\end{align*}
where $ K $ is a  positive constant to be  specified later. 
 Now, we use the following lemma:
\begin{lemma}\label{GRS} \textup{\textbf{(Lemma 3.3 of \cite{Gangopadhyay})}} 
For $ 0<\varepsilon<1/3 $, there exist $ \alpha=\alpha(\varepsilon)>0 $ and positive constants $ C_{10,1},C_{10,2},C_{10,3} $ such that for all $ n $ sufficiently large, we have
\begin{description}
\item[(a)] $ \sup_{(\bu,\bv)\in A_{n,\varepsilon }}\P\big\{\hh_{N_{n^{4}}(\bu)}(\bu)-\hh_{N_{n^{4}}(\bv)}(\bv)\notin D_{n^{2(1+\varepsilon)}}\big\}\leq C_{10,1}n^{-\alpha} $,
\item[(b)] $ \sup_{(\bu,\bv)\in A_{n,\varepsilon }}\P\big\{\hh_{N_{n^{4}}(\bu)}(\bu)-\hh_{N_{n^{4}}(\bv)}(\bv)\in D_{n^{2(1-\varepsilon)}}\big\}\leq C_{10,2}n^{-\alpha} $,
\item[(c)] $ \sup_{(\bu,\bv)\in A_{n,\varepsilon }}\P\big\{\hh_{N_{j}(\bu)}(\bu)-\hh_{N_{j}(\bv)}(\bv)\in D_{K\ln n}\text{ for some }j=0,1,\ldots ,n^{4}\big\} $

$\qquad \qquad \qquad\qquad \qquad \qquad\qquad \qquad \qquad\qquad \qquad \qquad\leq C_{10,3}n^{-\alpha}$.
\end{description}
\end{lemma}

From the lemma, there exists $ n_{0} $ such that for all $ n\geq n_{0} $,
\begin{equation}
\label{prop4.4}
\inf_{(\bu,\bv)\in A_{n,\varepsilon }}\P \big\{W_{n,\varepsilon}(\bu ,\bv) ~\big |~ \bu,\bv\in\mathscr{V}\big\}\geq 1-C_{10}n^{-\alpha},
\end{equation}
where $ C_{10} $ and $ \alpha=\alpha(\varepsilon) $ are some positive constants.

For $ l\geq 1 $, let $ r_{l}(n):=\min\{\lfloor \frac{K\ln n}{2d}\rfloor, \h_{N_{l}(\bu)}(\bu)(d)-\h_{N_{l-1}(\bu)}(\bu)(d)\} $. On the event $ W_{n,\varepsilon}(\bu ,\bv) $, we have
\[ \Big[\bigcup_{l=1}^{n^{4}}V\big(\h_{N_{l-1}(\bu)}(\bu),r_{l}(n)\big)\Big]\cap \Big[\bigcup_{l=1}^{n^{4}}V\big(\h_{N_{l-1}(\bv)}(\bv),r_{l}(n)\big)\Big]=\emptyset , \]
so we  consider another independent collection of i.i.d. uniform $ (0,1) $ random variables $ \{U^{\prime\prime}_{\bw}:\bw\in\Z^{d}\} $ and define a new family  $ \{\check{U}_{\bw}:\bw\in\Z^{d}\} $ as
\begin{align*}
\check{U}_{\bw}:=\left\{
\begin{array}{rl}
U_{\bw}^{\bu} & \text{if }\bw\in \bigcup_{l=1}^{n^{4}}V\big(\h_{N_{l-1}(\bu)}(\bu),r_{l}(n)\big),\\
U_{\bw}^{\bv} & \text{if }\bw\in \bigcup_{l=1}^{n^{4}}V\big(\h_{N_{l-1}(\bv)}(\bv),r_{l}(n)\big),\\
U_{\bw}^{\prime\prime} & \text{otherwise}. 
\end{array} \right.
\end{align*}
Taking
\[B_{l}(n):=\big\{\h_{N_{l}(\bu)}(\bu)(d)-\h_{N_{l-1}(\bu)}(\bu)(d)<  \tfrac{K\ln n}{2d}\big\}.\]
we see that, if $ B_{l}(n) $ occurs for each $ l=1,\ldots ,n^{4} $ then, for $Z_n(\bu,\bv)$ obtained from the joint paths $ \{(h_{n}(\bu),h_{n}(\bv)):n \geq 0\} $ constructed using $ \{\check{U}_{\bw}:\bw\in\Z^{d}\} $,
we have 
\[Z_{n^{4}}(\bu ,\bv)=\hh_{N_{n^{4}}(\bu)}(\bu)-\hh_{N_{n^{4}}(\bv)}(\bv).\]
Therefore, from \eqref{c5c6} and \eqref{prop4.4}  we get, for all $ n\geq n_{0} $ and $ (\bu,\bv)\in A_{n,\varepsilon } $,
\begin{align*}
&\P \big\{Z_{n^{4}}(\bu ,\bv)\in D_{n^{2(1+\varepsilon)}}\setminus D_{n^{2(1-\varepsilon)}} ~\big |~ \bu,\bv\in\mathscr{V}\big\}\\&~
\geq \P \big\{\big(\cap_{l=1}^{n^{4}}B_{l}(n)\big)\cap W_{n,\varepsilon}(\bu ,\bv)  ~\big |~ \bu,\bv\in\mathscr{V}\big\}\\&~
\geq  1-n^{4}C_{5}\exp\{-\tfrac{C_{6}K\ln n}{2d}\}-C_{10}n^{-\alpha}\\&~
\geq  1-C_{9}n^{-\beta},
\end{align*}
for $ K>\frac{8d}{C_{6}} $ and suitable choices of $ \beta ,C_{9} >0 $, which proves the proposition. 
\end{proof}

The rest of the proof of Theorem \ref{tree}(ii) follows along the lines as in Section 3.3 of \cite{Gangopadhyay}, and so we only provide a sketch of the proof. Using the notation of Proposition \ref{prop4.3}, taking $ \bu=(0,0,\ldots ,0)\in\Z^{d}  $, $ \bv=(n_{0},0,\ldots ,0)\in\Z^{d} $ (so $ (\bu ,\bv)\in A_{n_{0},\varepsilon} $) and $ r_{k}=\sum_{i=1}^{k}(n_{0}^{2^{i-1}})^{4} $, by Proposition \ref{prop4.3} it can be shown that 
\begin{align*}
&\P\{Z_{r_{k}}(\bu ,\bv)\in D_{n_{0}^{2^{k}(1+\varepsilon)}} \setminus D_{n_{0}^{2^{k}(1-\varepsilon)}}\text{ for all } k=1,\ldots ,j ~\big |~ \bu ,\bv\in\mathscr{V}\}\\&\geq \prod _{k=1}^{j}\big [1-C_{9}(n_{0}^{2^{k-1}})^{-\beta}\big].
\end{align*} 
Hence, since $ (0,\ldots ,0)\in\Z^{d-1} $ is the absorbing state of process $ \{Z_{l}(\bu ,\bv):l\geq 0\} $,
\begin{align*}
&\P\big\{\mathscr{G}\text{ has at least two distinct trees}\big\}\\
&~\geq\P\big\{Z_{l}(\bu ,\bv)\neq (0,\ldots ,0)\text{ for all } l\geq 0 ~\big |~ \bu ,\bv\in\mathscr{V}\big\}\times\P\{\bu ,\bv\in\mathscr{V}\}\\&
~\geq p^{2}\prod _{k=1}^{\infty}\big [1-C_{9}(n_{0}^{2^{k-1}})^{-\beta}\big]>0,
\end{align*}
and \eqref{a-dim4} is established for $ m=2 $. To establish \eqref{a-dim4} for $ m\geq 3 $, it is enough to take 
$\bu^{i}=((i-1)n_{1},0,\ldots ,0)\in\Z^{d}$ for each $i=1,\ldots ,m $,
where $ n_{1}\geq 1 $ is some constant satisfying $ n_{1}\geq\max\{n_{0},m^{1/\varepsilon}\} $ and
$\prod _{k=1}^{\infty}\big [1-C_{9}(n_{1}^{2^{k-1}})^{-\beta}\big]>1-\delta,$
for some $ \delta>0 $ that $ m(m-1)\delta/2 <1 $. Because then $ \P\{Z_{l}(\bu^{i} ,\bu^{j})\neq(0,\ldots ,0)\text{ for all } l\geq 0 ~\big |~ \bu^{i},\bu^{j}\in\mathscr{V}\}>1-\delta $ for each $ i,j\in\{1,\ldots ,m\} $ with $ i>j $, and hence $ \P\{\mathscr{G}\text{ has at least m distinct trees}\}> p^{m}\big(1-m(m-1)\delta /2 \big)>0 $. 
This completes the proof.

 Finally, the proof of Theorem \ref{tree}(iii) follows from a similar Burton-Keane type argument \cite{Burton} as was used in Theorem 2 of \cite{Athreya}.

\section{Convergence to the Brownian Web}\label{s4}
In this section we prove Theorem \ref{BW}. For a subset of paths $ \Gamma $ in $ \Pi $, and $ t_{0} ,t, a,b\in\R $ with $ t > 0 $ and $ a < b $, consider the following counting random variables:
\begin{align*}
&\eta_{\Gamma}(t_{0},t;a,b):=\#\big\{\pi(t_{0}+t):\pi\in\Gamma , \varsigma_{\pi}\leq t_{0},\pi (t_{0})\in [a,b]\big\},
\\&\hat{\eta}_{\Gamma}(t_{0},t;a,b):=\#\big\{\pi(t_{0}+t):\pi\in\Gamma , \varsigma_{\pi}\leq t_{0},\pi (t_{0}+t)\in [a,b]\big\}.
\end{align*}
From now on we denote the Brownian motion with unit diffusion constant starting from $ \bx $ by $ B_{\bx} $ and coalescing Brownian motions with unit diffusion constants starting from $ \bx_{1},\ldots ,\bx_{k} $ by $ (W_{\bx_{1}},\ldots ,W_{\bx_{k}}) $. For the proof of Theorem \ref{BW}, we use the following theorem which is Theorem 2.2 of \cite{Fontes2}:

\begin{theorem}\label{barayehamg}
Suppose $ \Theta_{1},\Theta_{2},\ldots $ are  $({\cal H},{{\cal B}_{\cal H}}) $ valued random variables with non-crossing paths. Assume that the following conditions hold:
\begin{itemize}
\item[{\rm {(I$_1$)}}] For all $ \by\in\R^{2} $, there exist $ \zeta_{n}^{\by}\in\Theta_{n} $ such that, for any finite set of points  $ \by_{1},\ldots ,\by_{k} $ from a deterministic countable dense set $ \mathcal{D} $ of $ \mathbb{R}^{2} $,  $ (\zeta_{n}^{\by_{1}},\ldots ,\zeta_{n}^{\by_{k}})$ converges in distribution to $(W_{\by_{1}},\ldots ,W_{\by_{k}}) $ as $ n\rightarrow \infty $;
\item[{\rm {(B$_1$)}}] For all $ t > 0 $, $ \limsup_{n\rightarrow\infty}\sup_{(a,t_{0})\in\mathbb{R}^{2}}\mathbb{P}\{\eta_{\Theta_{n}}(t_{0},t;a,a+\varepsilon)\geq 2\}\rightarrow 0 $ as $ \varepsilon\downarrow 0$;
 \item[{\rm {(B$_2$)}}] For all $ t > 0 $, $ \varepsilon^{-1}\limsup_{n\rightarrow\infty}\sup_{(a,t_{0})\in\mathbb{R}^{2}}\mathbb{P}\{\eta_{\Theta_{n}}(t_{0},t;a,a+\varepsilon)\geq 3\}\rightarrow 0 $ as $ \varepsilon\downarrow 0 $.
\end{itemize}
Then, $ \Theta_{n}$ converges in distribution to $\mathcal{W} $ as $n\to \infty$.
\end{theorem}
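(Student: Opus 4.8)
The plan is to follow the by-now-standard route of Fontes, Isopi, Newman and Ravishankar: first establish that the sequence $\{\Theta_n\}$ is tight as a family of $(\mathcal H,d_{\mathcal H})$-valued random variables, and then show that \emph{every} subsequential limit $\bar\Theta$ has the law of $\mathcal W$, by checking that such a $\bar\Theta$ simultaneously (i) contains a version of the Brownian web and (ii) contains nothing else. By Theorem \ref{theorem:Bwebcharacterisation} the law of the Brownian web is the unique one for which the closure in $(\Pi,d_\Pi)$ of the coalescing Brownian motions indexed by any countable dense deterministic set is the whole collection; so once (i) and (ii) hold we get $\bar\Theta\overset{d}{=}\mathcal W$, and tightness together with uniqueness of the subsequential limit yields $\Theta_n\Rightarrow\mathcal W$.

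\emph{Tightness.} Using the compactness characterization of subsets of $(\Pi,d_\Pi)$ --- a closed set is compact precisely when its paths have a uniformly controlled modulus of continuity in the $\rho$-geometry of $\R^{2}_c$ --- it suffices to bound, for each $T,L,\beta>0$,
\[
\limsup_{n\to\infty}\P\Bigl\{\exists\,\pi\in\Theta_n:\ \varsigma_\pi\le T,\ \pi\text{ meets }[-L,L]\times\{T\},\ \sup_{\substack{s,t\in[\varsigma_\pi\vee(-T),\,T]\\ |t-s|\le\delta}}|\pi(s)-\pi(t)|>\beta\Bigr\}
\]
and show it tends to $0$ as $\delta\downarrow0$. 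This is where the non-crossing hypothesis is essential: a path that oscillates by more than $\beta$ over a time window of length $\delta$ drags, through the ordering that non-crossing imposes on the paths lying spatially on one side of it, a whole bundle of paths with it, and reading off their positions at a fixed earlier time forces $\eta_{\Theta_n}(t_0,t;a,a+\varepsilon)\ge2$ for suitable $t_0$ on a mesh-$\delta$ time grid, $t$ of order $\delta$, $a$ on a spatial grid and $\varepsilon$ of order $\beta$. Summing over the $O(T/\delta)$ grid points and applying $\mathrm{(B_1)}$ (letting first $n\to\infty$ and then $\delta\downarrow0$) gives tightness; so subsequential limits $\bar\Theta$ exist, and we fix one, say $\Theta_{n_k}\Rightarrow\bar\Theta$.

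\emph{The limit contains a Brownian web.} Invoke Skorokhod's representation to realize $\Theta_{n_k}\to\bar\Theta$ almost surely in $d_{\mathcal H}$; enlarging the space and diagonalizing over the countable set $\mathcal D$, arrange that for every $\by\in\mathcal D$ the path $\zeta_{n_k}^{\by}\in\Theta_{n_k}$ of $\mathrm{(I_1)}$ converges $d_\Pi$-almost surely to a limit $\hat W_{\by}$, with every finite subfamily $(\hat W_{\by_1},\dots,\hat W_{\by_m})$ distributed as coalescing Brownian motions (being the distributional limit of $(\zeta_{n_k}^{\by_1},\dots,\zeta_{n_k}^{\by_m})$). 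Since $\hat W_{\by}$ is a $d_\Pi$-limit of points of the $d_{\mathcal H}$-convergent compact sets $\Theta_{n_k}$, it lies in $\bar\Theta$; as $\bar\Theta$ is closed, $\bar{\mathcal W}:=\overline{\{\hat W_{\by}:\by\in\mathcal D\}}\subseteq\bar\Theta$ almost surely, and by Theorem \ref{theorem:Bwebcharacterisation} the set $\bar{\mathcal W}$ is a version of the standard Brownian web.

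\emph{The limit contains nothing more --- the hard step.} It remains to show $\bar\Theta=\bar{\mathcal W}$ a.s. Since $\bar{\mathcal W}\subseteq\bar\Theta$ already contains a path out of every deterministic space-time point, an extra path in $\bar\Theta$ would have to be squeezed between Brownian-web paths and is detected by the counting variables at a later time. There are three technical ingredients. First, $\mathrm{(B_1)}$ and $\mathrm{(B_2)}$ are stated for $\eta$ (paths pinned at the earlier time $t_0$) and must be converted into the analogous estimates for $\hat\eta$ (the path density at the later time $t_0+t$); this again uses non-crossing --- the positions in $[a,a+\varepsilon]$ at time $t_0+t$ are realized by an ordered, non-crossing family of paths whose positions at time $t_0$ are monotone, so a grid argument (using $\mathrm{(B_1)}$ to control the event that these positions are spread out and $\mathrm{(B_2)}$ to control the event that three of them cluster in an $\varepsilon$-interval) gives $\limsup_{\varepsilon\downarrow0}\varepsilon^{-1}\limsup_k\E[\hat\eta_{\Theta_{n_k}}(t_0,t;a,a+\varepsilon)]\le\theta(t)$, where $\theta(t)=(\pi t)^{-1/2}$ is the explicit Brownian-web density. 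Second, one passes this bound through the weak limit: the variables $\hat\eta$ are lower semicontinuous enough on $\mathcal H$ at a co-countable set of levels $(t_0,t_0+t)$, so $\E[\hat\eta_{\bar\Theta}(t_0,t;a,a+\varepsilon)]\le\liminf_k\E[\hat\eta_{\Theta_{n_k}}(t_0,t;a,a+\varepsilon)]$ and hence $\limsup_{\varepsilon\downarrow0}\varepsilon^{-1}\E[\hat\eta_{\bar\Theta}(t_0,t;a,a+\varepsilon)]\le\theta(t)$. Third, since $\bar{\mathcal W}\subseteq\bar\Theta$ forces $\hat\eta_{\bar{\mathcal W}}\le\hat\eta_{\bar\Theta}$ pointwise and $\E[\hat\eta_{\bar{\mathcal W}}(t_0,t;a,a+\varepsilon)]=\varepsilon\,\theta(t)$ exactly, the two first moments agree in the limit $\varepsilon\downarrow0$, which forces $\hat\eta_{\bar\Theta}=\hat\eta_{\bar{\mathcal W}}$ a.s.\ at a.e.\ level; running this over a countable dense family of $(t_0,t,a,b)$, together with closedness and the continuity of paths (non-crossing paths sharing a point with a unique continuation must coalesce), shows $\bar\Theta$ and $\bar{\mathcal W}$ contain the same paths almost surely, i.e.\ $\bar\Theta=\bar{\mathcal W}$. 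The bulk of the work --- and the main obstacle --- is precisely this last step: the non-crossing-based conversion of the $\eta$-hypotheses into a sharp $\hat\eta$ first-moment bound, the semicontinuity bookkeeping needed to carry it through the weak limit, and the exact matching with the Brownian-web density; everything preceding it is comparatively soft.
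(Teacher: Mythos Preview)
The paper does not prove this theorem at all: it is quoted as Theorem~2.2 of Fontes \textit{et al.}~\cite{Fontes2} and used as a black box to reduce Theorem~\ref{BW} to the verification of conditions $(\mathrm{I}_1)$, $(\mathrm{B}_1)$, $(\mathrm{B}_2)$ for the particular family $\bar{\mathcal X}_n(\sigma,\gamma)$. So there is no ``paper's own proof'' to compare against.

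That said, your sketch is a faithful outline of the original Fontes--Isopi--Newman--Ravishankar argument: tightness in $(\mathcal H,d_{\mathcal H})$ from the non-crossing assumption together with~$(\mathrm{B}_1)$; the inclusion $\bar{\mathcal W}\subseteq\bar\Theta$ for any subsequential limit via~$(\mathrm{I}_1)$ and Skorokhod representation; and the reverse inclusion by matching the $\hat\eta$-density of $\bar\Theta$ with the exact Brownian-web density $(b-a)/\sqrt{\pi t}$, using $(\mathrm{B}_1)$--$(\mathrm{B}_2)$ and lower semicontinuity to pass the bound through the weak limit. The high-level structure is correct; the genuinely delicate points you flag (the non-crossing conversion from $\eta$ to $\hat\eta$, and the semicontinuity bookkeeping) are indeed where the work lies in \cite{Fontes2}, but none of this is reproduced or needed in the present paper.
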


To prove Theorem \ref{BW}, we first show that $ \bar{\mathcal{X}}_{n}(\sigma ,\gamma) $ is compact for each $ n\geq 1 $ and then we show that the sequence $\{ \bar{\mathcal{X}}_{n}(\sigma ,\gamma) : n \geq 1\}$ satisfies the conditions of the above theorem.

\subsection{$\bar{\mathcal{X}}_{n}(\sigma ,\gamma)$ is an  $({\cal H},{{\cal B}_{\cal H}})$ valued random variable}

It suffices to show that  $ \mathcal{X} $ has compact closure in $ (\Pi ,d_{\Pi}) $.
For any path $ \pi\in\Pi $, define the extended path $ \hat{\pi} $ as follows:
\begin{align*}
\hat{\pi}(t):= \left\{
\begin{array}{rl}
\pi( \varsigma_{\pi}) & \text{for }t\leq  \varsigma_{\pi} ,\\
\pi(t)~ &  \text{for }t> \varsigma_{\pi} .
\end{array} \right.
\end{align*}
 For $ {\pi _1},{\pi _2}\in\Pi $, let
\begin{align*}
\hat{d}_{\Pi}({\hat{\pi }_1},{\hat{\pi} _2}):= |\tanh ({\varsigma _{{\pi _1}}}) - \tanh ({\varsigma _{{\pi _2}}})| \vee \mathop {\sup }\limits_{t \ge {\varsigma _{{\pi _1}}} \wedge {\varsigma _{{\pi _2}}} } \left| {\frac{{\tanh (\hat{\pi} _{1}(t ))}}{{1 + |t|}} - \frac{{\tanh (\hat{\pi} _{2}(t ))}}{{1 + |t|}}} \right|,
\end{align*}
so that $ \hat{d}_{\Pi}({\hat{\pi }_1},{\hat{\pi} _2})=d_{\Pi}({\pi _1},{\pi _2})  $. Thus we need to show that $ \hat{\mathcal{X}}:=\{\hat{\pi}:\pi\in\mathcal{X}\} $ has compact closure in $ (\hat{\Pi} ,\hat{d}_{\Pi}) $, where $ \hat{\Pi}:=\{\hat{\pi}:\pi\in\Pi\} $. For this purpose, we prove that the closure of $ f(\hat{\mathcal{X}}) $
is compact for some homeomorphism $ f $.

Note that each path $ \hat{\pi}\in \hat{\Pi} $ can be seen as the graph $ \{(\hat{\pi}(t),t):t\in\R \}\subseteq\R^{2} $. Taking a map $ (\varphi ,\psi ):\R^{2} \rightarrow (-1,1)\times (-1,1) $ as
 \begin{align*}
\big(\varphi ,\psi \big)(x,t)=\big(\varphi(x,t) ,\psi(t) \big):=\big(\dfrac{\tanh(x)}{1+\vert t\vert},\tanh(t)\big ),
 \end{align*}
 we define the  homeomorphism  $ f:\hat{\Pi}\rightarrow f(\hat{\Pi}) $ so that for $ \hat{\pi}\in\hat{\Pi} $, $ f(\hat{\pi}) $ is the following graph:
 \[\Big\{\big(\varphi(\hat{\pi}(t),t),\psi(t)\big):t\in\R \Big\}\subseteq(-1,1)\times(-1,1).\]
Now using Arzel\`{a}-Ascoli theorem and the Lipschitz continuity of the
paths, we prove that $ f(\hat{\mathcal{X}})\subseteq\big((-1,1)\times(-1,1)\big)^{\R } $ has  compact closure.
To show the equicontinuity of $ f(\hat{\mathcal{X}}) $ on $ \R $ we  note that every path $ f(\hat{\pi})\in f(\hat{\mathcal{X}}) $ is given by $ f(\hat{\pi})(t) = \big(\varphi(\hat{\pi}(t),t),\psi(t)\big) \in (-1,1)\times (-1,1)$.
We equip $ (-1,1)\times (-1,1) $  with the $ \mathcal{L}_{\infty} $-metric $ \hat{\rho} $  for which 
\begin{align*}
&\hat{\rho}\Big(\big(\varphi(\hat{\pi}_{1}(t_{1}),t_{1}),\psi(t_{1})\big),\big(\varphi(\hat{\pi}_{2}(t_{2}),t_{2}),\psi(t_{2})\big)\Big)\\&~=\big |\varphi(\hat{\pi}_{1}(t_{1}),t_{1})-\varphi(\hat{\pi}_{2}(t_{2}),t_{2})\big | \vee | \psi(t_{1})-\psi(t_{2}) |\\&~=\rho\big((\hat{\pi}_{1}(t_{1}),t_{1}),(\hat{\pi}_{2}(t_{2}),t_{2}) \big),
\end{align*}
for every $ \hat{\pi}_{1},\hat{\pi}_{2}\in\hat{\Pi} $, $ t_{1},t_{2}\in\R $ and where $\rho$ is as defined in \eqref{rhodef}. 
We  now show that,  for any $ \varepsilon>0 $, there exists $ \delta>0 $ such that if $ t_{1},t_{2}\in\R $ with $ |t_{1}-t_{2}|<\delta $, then 
\[\sup\Big\{\rho\big((\hat{\pi}(t_{1}),t_{1}),(\hat{\pi}(t_{2}),t_{2}) \big) : \hat{\pi}\in\hat{\mathcal{X}}\Big\}<\varepsilon ,\]
 which establishes the uniform equicontinuity  of $ f(\hat{\mathcal{X}}) $.  Indeed, if $ f(\hat{\mathcal{X}}) $ is not uniformly equicontinuous on  $ \R $, then there must exist $ \varepsilon>0 $ such that for all $ n\in\N $, we can find $ t^{n}_{1},t^{n}_{2}\in  \R $ with $ |t^{n}_{1}-t^{n}_{2}|<1/n $ and
\begin{align*}
\rho\big((\hat{\pi}_{n}(t^{n}_{1}),t^{n}_{1}),(\hat{\pi}_{n}(t^{n}_{2}),t^{n}_{2}) \big)>\varepsilon~~\text{for some}~\hat{\pi}_{n}\in\hat{\mathcal{X}}.
\end{align*}
However this is not possible because
$$
|\psi(t) -  \psi(s)| = |\tanh(t) -  \tanh(s)|  \to 0 \text{ as } |t-s| \to 0,
$$
and, noting that for our paths  $|\hat{\pi}(t) - \hat{\pi}(s)|\leq |t-s|$, we have
$$
\big|\varphi(\hat{\pi}(t),t) - \varphi(\hat{\pi}(s),s)\big|
= \left|\frac{\tanh(\hat{\pi}(t))}{1+|t|} -  \frac{\tanh(\hat{\pi}(s))}{1+|s|}\right| \to 0 \text{ as } |t-s| \to 0.
$$

In addition, for any $ t\in \R $, if $ f_{t}(\hat{\mathcal{X}}) $ is the set of postions of the paths in  $ f(\hat{\mathcal{X}}) $ at time $ t $ then
\begin{align*}
f_{t}(\hat{\mathcal{X}})=\big\{\big(\varphi(\hat{\pi}(t),t),\psi(t)\big):\hat{\pi}\in \hat{\mathcal{X}}\big\}=\big\{\varphi(\hat{\pi}(t),t):\hat{\pi}\in \hat{\mathcal{X}}\big\}\times \big\{\psi(t)\big\}.
\end{align*}
Since  $ \big\{\varphi(\hat{\pi}(t),t):\hat{\pi}\in \hat{\mathcal{X}}\big\} $ is bounded, the closure of $ \big\{\varphi(\hat{\pi}(t),t):\hat{\pi}\in \hat{\mathcal{X}}\big\} $ is compact. Thus, $ f_{t}(\hat{\mathcal{X}}) $ has compact closure.

\subsection{Verification of condition ($ \text{I}_{1} $)}
For $ k\geq 0 $ and $ \bu\in\Zz $, let $ h_k(\bu) =(x_k,t_k)$ (say) and
$X_{k+1}^{\bu} = x_{k+1}- x_k $, $ Y_{k+1}^{\bu} = t_{k+1} -t_k$  be the marginal increments  of $ \pi_{\bu}$ along each direction. We observe that $\{X_{k}^{\bu}: k \geq 1\} $ and $\{Y_{k}^{\bu}: k \geq 1\} $ are two collections of  i.i.d.  random variables, with $|X_{k}^{\bu}| \leq Y_{k}^{\bu}$ . Also $X_k^{\bu}$ has a symmetric distribution, with both $|X_{k}^{\bu}|$ and $Y_{k}^{\bu}$ having exponentially decaying tail probabilities; in particular, $ \P\{Y_{k}^{\bu}>m\}  \leq (1-p)^{(m+1)^{2}-1} $. 
Writing  $X_{ k}$ for $X_{ k}^{\mathbf{0}}$ and $Y_{ k}$ for $Y_{ k}^{\mathbf{0}}$, we have
\begin{align*}
h_k(\bu) = \big(\bu(1)+\sum_{i=1}^{k} X_{i}^{\bu}, \; \bu(2)+\sum_{i=1}^{k} Y_{i}^{\bu}\big),~~~~~~~ \\
 S_k:= \sum_{i=1}^{k} X_{i} \stackrel{d}{=} \sum_{i=1}^{k} X_{i}^{\bu}
\text{ and }  R_k := \sum_{i=1}^{k} Y_{i} \stackrel{d}{=} \sum_{i=1}^{k} Y_{i}^{\bu}. 
\end{align*}
 where we take $ \sum_{i=1}^{0}=0 $.

\begin{proposition}\label{I11}
There exist $ \sigma $ and $ \gamma $ such that $ \pi_{\mathbf{0}}^{(n)}(\sigma ,\gamma )  \Rightarrow B_{\mathbf{0}} $ in $ (\Pi ,d_{\Pi}) $.
\end{proposition}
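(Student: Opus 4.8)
The plan is to realize $\pi_{\mathbf 0}$ as a random walk path run in continuous time and then invoke Donsker's invariance principle together with a time-change argument. Concretely, the path $\pi_{\mathbf 0}$ visits the lattice points $h_k(\mathbf 0) = (S_k, R_k)$, where $S_k = \sum_{i=1}^k X_i$ and $R_k = \sum_{i=1}^k Y_i$, and interpolates linearly between consecutive points. The increments $(X_i, Y_i)$ are i.i.d., with $X_i$ symmetric and $|X_i| \le Y_i$, and both $|X_i|$ and $Y_i$ have exponentially decaying tails (as recorded just before the proposition, using $\P\{Y_k > m\} \le (1-p)^{(m+1)^2-1}$); in particular all moments of $X_i$ and $Y_i$ are finite. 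Set $\sigma^2 := \E[X_1^2]$ (one must check $\sigma^2 > 0$, which follows since $X_1$ is not a.s.\ zero) and $\gamma := \E[Y_1]$.

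The key steps, in order, are as follows. First I would record the basic distributional facts: finiteness of $\E[X_1^2]$, $\E[Y_1]$, and of exponential moments of both, and $\sigma^2>0$, $\gamma>0$. Second, apply the strong law of large numbers to get $R_k/k \to \gamma$ a.s., so that the ``vertical clock'' of the walk is, to leading order, linear in $k$; quantitatively, for a time horizon scaled by $n^2$ the number of steps used is $\approx n^2\gamma^{-1}t$ with fluctuations $o(n^2)$ (here the exponential tails give more than enough control, e.g.\ via a maximal inequality, to bound $\sup_{t \le T}|R_{\lfloor n^2\gamma^{-1}t\rfloor} - n^2 t|$ by $o(n^2)$ in probability). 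Third, apply Donsker's theorem to the partial sums $S_k$: the rescaled process $t \mapsto S_{\lfloor n^2 t\rfloor}/(n\sigma)$ converges in distribution in $C[0,\infty)$ (locally uniform topology) to a standard Brownian motion $B_{\mathbf 0}$. Fourth, compose the two: the scaled path $\pi^{(n)}_{\mathbf 0}(\sigma,\gamma)(t) = \pi_{\mathbf 0}(n^2\gamma t)/(n\sigma)$ reads off the horizontal coordinate of the walk at vertical height $n^2\gamma t$, which by Step 2 corresponds to $\approx n^2 t$ steps; feeding the LLN time-change into the Donsker limit via the continuous-mapping theorem (composition of a convergent sequence of continuous paths with a sequence of time-changes converging uniformly to the identity is continuous) yields $\pi^{(n)}_{\mathbf 0}(\sigma,\gamma) \Rightarrow B_{\mathbf 0}$ in $C[0,\infty)$ with the locally uniform topology. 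Fifth, upgrade from locally uniform convergence on $C[0,\infty)$ to convergence in $(\Pi, d_\Pi)$: since the starting times all converge ($\varsigma_{\pi^{(n)}_{\mathbf 0}} = 0 \to 0$) and $(\Pi,d_\Pi)$-convergence is exactly locally uniform convergence of paths plus convergence of starting points (as noted in the excerpt), this is immediate once one also checks the linear-interpolation error $\sup_{0\le s\le t\le T,\,|t-s|\le 1/k}|\pi_{\mathbf 0}(\cdot)\text{ vs.\ its nodes}|$ is negligible after scaling --- which again follows from $|X_i|\le Y_i$ and the tail bound on $\max_{i\le n^2 T} Y_i$.

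The main obstacle I expect is Step 4, the rigorous handling of the random time-change: one must show that the horizontal-displacement process indexed by the \emph{vertical} coordinate (rather than by step number) still converges to Brownian motion, which requires combining Donsker for $S_k$ with the fact that the inverse clock $R_k^{-1}$ is asymptotically deterministic and linear. The cleanest route is to prove joint convergence of the pair $\big(S_{\lfloor n^2 t\rfloor}/(n\sigma),\, R_{\lfloor n^2 t\rfloor}/n^2\big)$ to $(B_{\mathbf 0}(t), \gamma t)$ in $C([0,\infty),\R^2)$ --- the second coordinate converging to a deterministic limit, so joint convergence follows from marginal convergence plus Slutsky --- and then apply the continuous mapping theorem to the map $(w,a)\mapsto w\circ a^{-1}$, which is continuous at points $(w,a)$ with $a$ strictly increasing and continuous. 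A secondary technical point is controlling the discrepancy between $\pi_{\mathbf 0}$ evaluated at a general real height $t$ and the piecewise-linear interpolation through its nodes $(R_k, S_k)$; since between nodes the path moves horizontally by at most the vertical gap $Y_{k+1}$, and $\max_{k \le n^2 T} Y_{k+1} = o_{\P}(n)$ by the exponential tail (a union bound gives $\P\{\max_{k\le n^2 T} Y_k \ge \varepsilon n\} \le n^2 T (1-p)^{(\varepsilon n+1)^2 - 1}\to 0$), this error is uniformly negligible after dividing by $n\sigma$, and the argument closes.
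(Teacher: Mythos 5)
Your proposal is correct and follows essentially the same route as the paper: the same choices $\sigma=\sqrt{\V(X_1)}$ and $\gamma=\E[Y_1]$, Donsker's theorem for the partial sums $S_k$, a law-of-large-numbers/renewal control of the vertical clock, and negligibility of the interpolation term via the exponential tail bounds. The only cosmetic difference is that you formalize the random time change through joint convergence and the composition-with-inverse map, whereas the paper applies the renewal theorem to get $N(n^{2}\gamma t)/n^{2}\rightarrow t$ almost surely and plugs this into the Donsker limit -- an equivalent device.
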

\begin{proof}
 Let $ t\geq 0 $. Taking $ N(t):=\max\{n \geq 0:R_{n}\leq t\} $,  we have
 \[\pi_{\mathbf{0}}(t)=S_{N(t)}+\dfrac{t-R_{N(t)}}{R_{N(t)+1}-R_{N(t)}}X_{N(t)+1},\]
and its diffusively scaled version is
\begin{align*}
\pi_{\mathbf{0}}^{(n)}(\sigma ,\gamma )(t)=\dfrac{1}{n\sigma}\Big [\sum_{i=1}^{N(n^{2}\gamma t)}X_{i}+\dfrac{n^{2}\gamma t-R_{N(n^{2}\gamma t)}}{R_{N(n^{2}\gamma t)+1}-R_{N(n^{2}\gamma t)}}X_{N(n^{2}\gamma t)+1}\Big ].
\end{align*}
Taking $ \sigma =\sqrt{\V (X_{1})} $ and $ Z_{i}:=X_{i}/\sigma $,  we have
\[ \pi_{\mathbf{0}}^{(n)}(\sigma ,\gamma )(t)=\frac{1}{n}\Big [\sum_{i=1}^{N(n^{2}\gamma t)}Z_{i}+\dfrac{n^{2}\gamma t-R_{N(n^{2}\gamma t)}}{R_{N(n^{2}\gamma t)+1}-R_{N(n^{2}\gamma t)}}Z_{N(n^{2}\gamma t)+1}\Big ],\]
where $ Z_{i} $'s are  i.i.d. with mean 0 and variance 1. 
Next, we define another stochastic process $ \{\hat{\pi}_{n}(t):t\geq 0\} $ by
\begin{align*}
\hat{\pi}_{n}(t):=\dfrac{1}{n}\Big [\sum_{i=1}^{\lfloor n^{2}t\rfloor}Z_{i}+(n^{2}t-\lfloor n^{2}t\rfloor)Z_{\lfloor n^{2}t\rfloor+1}\Big ],
\end{align*}
and we see that
\begin{equation} \label{r2ndterm}
\pi_{\mathbf{0}}^{(n)}(\sigma ,\gamma )(t)\stackrel{d}{=}\hat{\pi}_{n}(N(n^{2}\gamma t)/n^{2})+\dfrac{1}{n}\Big [\dfrac{n^{2}\gamma t-R_{N(n^{2}\gamma t)}}{R_{N(n^{2}\gamma t)+1}-R_{N(n^{2}\gamma t)}}Z_{N(n^{2}\gamma t)+1}\Big ] .
\end{equation}
From Donsker's invariance principle, the process $ \hat{\pi}_{n} $ converges in distribution to the standard Brownian motion.  Also taking $ \gamma =\E[Y_{1}] $,  by the renewal theorem (see Theorem 4.4.1 of \cite{Durrett}), we have 
\[\dfrac{N(n^{2}\gamma t)}{n^{2}}\stackrel{a.s.}{\longrightarrow}\dfrac{\gamma t}{\E[Y_{1}]} = t \text{ as } n \to \infty. \]
Thus to complete the proof of the proposition, it suffices to show that the second term  in \eqref{r2ndterm} converges in probability to $0$ for this choice of $\gamma$.

Noting that
$0\leq\dfrac{n^{2}\gamma t-R_{N(n^{2}\gamma t)}}{R_{N(n^{2}\gamma t)+1}-R_{N(n^{2}\gamma t)}}\leq 1$, 
we have, for any $ \varepsilon >0 $ and $s > 0$, 
\begin{align*}
&\P\Big\{\sup_{0\leq t\leq s}\Big\vert\dfrac{n^{2}\gamma t-R_{N(n^{2}\gamma t)}}{R_{N(n^{2}\gamma t)+1}-R_{N(n^{2}\gamma t)}}Z_{N(n^{2}\gamma t)+1}\Big\vert >n\varepsilon\Big\}\\
&~\leq \P\big\{\sup_{i=1,\ldots ,\lfloor n^{2}\gamma s\rfloor +1}\vert Z_{i}\vert >n\varepsilon\big\}\\
&~\leq (\lfloor n^{2}\gamma s\rfloor +1) \P\{\vert X_{1}\vert >n\sigma\varepsilon\}\rightarrow 0  \text{ as } n \to \infty, 
\end{align*}
because $ \vert X_{1}\vert $ has exponentially decaying tail probabilities.  
\end{proof}

If $ (\frac{\bu_{n}(1)}{n\sigma},\frac{\bu_{n}(2)}{n^{2}\gamma}) \xrightarrow{\P} \bu$ as $n\rightarrow\infty $, noting that
$ \{(\pi_{\bu_{n}}(t),t):t\geq \bu_{n}(2) \}\disteq \bu_{n}+ \{(\pi_{\mathbf{0}}(t),t):t\geq 0 \}$, 
we have 
$\pi_{\bu_{n}}^{(n)}(\sigma ,\gamma )$ converges in distribution to $B_{\bu}$  as $n \to \infty$.

For $ \bx\in\R^{2} $ and $ n \geq 1 $, let
\begin{equation*}
\label{rxn}
\bx_{n}=\big(\lfloor n\sigma \bx (1)\rfloor +\mathtt{i}_{n} 
,\lfloor n^{2}\gamma \bx (2)\rfloor\big), 
\end{equation*}
where 
$\mathtt{i}_{n}= \mathtt{i}_{n}( \bx)  :=\inf \{i \geq 1 : (\lfloor n\sigma \bx (1)\rfloor +i,\lfloor n^{2}\gamma \bx (2)\rfloor ) \in\mathscr{V}\}$.
For any $ \delta\in(0,1) $, we have $\P\{\mathtt{i}_{n}> n^{\delta}\} \leq (1-p)^{n^{\delta}-1}$, so
$(\frac{\bx_{n}(1)}{n\sigma},\frac{\bx_{n}(2)}{n^{2}\gamma})\xrightarrow{\P} \bx$ as $n \to \infty$. Thus, taking $\zeta_{n}^{\bx}:=\pi^{(n)}_{\bx_{n}}(\sigma ,\gamma )$, we have that  $\zeta_{n}^{\bx}\Rightarrow B_{\bx}  $  as $n \to \infty$, which verifies condition ($ \text{I}_{1} $)  for $  k = 1 $.

For $ k\geq 2 $ we proceed by induction. Suppose condition ($ \text{I}_{1} $) holds for $ k-1 $ points.
Fix $ \bx^{1},\ldots ,\bx^{k}\in\R^{2} $ and without loss of generality assume that $ \bx^{1}(2)\leq\cdots\leq\bx^{k-1}(2)\leq\bx^{k}(2)=0 $. For simplicity in notation we write 
$\pi^{(n)}$ for $\pi^{(n)}(\sigma ,\gamma )$, where $\sigma$ and $\gamma$ are as in Proposition \ref{I11}.
Writing $\zeta_{n}^{\bx}:=\pi^{(n)}_{\bx_{n}}$ we now show that
$(\zeta_{n}^{\bx^{1}},\ldots ,\zeta_{n}^{\bx^{k}})\Rightarrow (W_{\bx^{1}},\ldots ,W_{\bx^{k}})$ as $n\to\infty$,
where the convergence occurs in the product space $ \Pi^{k} $ equipped with the metric
$d_{\Pi}^{(k)}((\pi_{1},\ldots ,\pi_{k}),(\theta_{1},\ldots ,\theta_{k}))=\sum_{i=1}^{k}d_{\Pi}(\pi_{i},\theta_{i})$.

The proof we present here is an adaptation of the argument
used by Roy \textit{et al.} \cite{Roy2016}.  The basic idea of the proof is noting that as long as the $ k $-th path is far from the other $ k - 1 $ paths, it can be approximated by an independent path, but when it comes close enough to one of the other $ k-1 $ paths, it quickly coalesces with the nearest path. As in \cite{Roy2016}, we first introduce a \textit{coalescence map}. Fix $ \alpha\in (0,\frac{1}{2}) $ and, for $ n\geq 1 $, let
$\mathcal{A}_{n,\alpha}:=\{(\pi_{1},\ldots ,\pi_{k})\in\Pi^{k}:\;t_{n,k} < \infty\}$  where 
\[t_{n,k}:=\inf \big\{t:t\geq\max \{ \varsigma_{\pi_{i}},\varsigma_{\pi_{k}}\},\vert\pi_{i}(t)-\pi_{k}(t)\vert\leq n^{-\alpha}
\text{~for~some~}1\leq i\leq k-1\big\}.\]
 Here, $ t_{n,k} $ is the time when the $k$-th path comes close to one of the other $ k-1 $ paths. In the case that $ t_{n,k}<\infty $, let $ s_{n,k}:=\frac{\lfloor n^{2}\gamma t_{n,k}\rfloor +1}{n^{2}\gamma} $ and
\begin{equation}
\label{ri0}
 i_{0}:=\min\big\{i\in\{1,\ldots , k-1\}:\vert\pi_{i}(t_{n,k})-\pi_{k}(t_{n,k})\vert\leq n^{-\alpha}\big\},
 \end{equation}
where $ i_{0} $ is the smallest index between indices of those paths which are close to $ \pi_{k} $ at time $ t_{n,k} $, and then define 
\begin{align*}
\bar{\pi}_{k}(t):= \left\{
\begin{array}{ll}
\pi_{k}(t) & \text{ for } \varsigma_{\pi_{k}}\leq t\leq t_{n,k},\\
\pi_{k}(t_{n,k})+\dfrac{t-t_{n,k}}{s_{n,k}-t_{n,k}}\big[\pi_{i_{0}}(s_{n,k})-\pi_{k}(t_{n,k})\big] & \text{ for } t_{n,k}< t< s_{n,k},\\
\pi_{i_{0}}(t) & \text{ for } t\geq s_{n,k}. 
\end{array} \right.
\end{align*}

\begin{definition}
For $ n\geq 1 $, the map $ f_{n,\alpha}:\Pi^{k}\rightarrow\Pi^{k} $ given by \begin{align*}
f_{n,\alpha}(\pi_{1},\ldots ,\pi_{k}):= \left\{
\begin{array}{rl}
(\pi_{1},\ldots  ,\pi_{k-1},\bar{\pi}_{k}) ~~~& \text{for~} (\pi_{1},\ldots ,\pi_{k})\in \mathcal{A}_{n,\alpha},\\
(\pi_{1},\ldots ,\pi_{k-1},\pi_{k}) ~~~& \text{otherwise} ,
\end{array} \right.
\end{align*}
 is the \textit{$ \alpha $-coalescence map}.
\end{definition}

 We complete the verification of condition ($ \text{I}_{1} $) in the following 3 steps:
\paragraph{Step 1}
We construct paths $ \pi_{1},\ldots ,\pi_{k-1} $ starting from $ \bx_{n}^{1},\ldots ,\bx_{n}^{k-1} $ as before. However, from $ \bx_{n}^{k} $ we construct a path $ \tilde{\pi}_{k} $ using an independent collection  
$ \{U^{'''}_{\bw} :\bw\in\Z^{2}\} $  of i.i.d.  uniform $ (0,1) $ random variables. 
Clearly $\tilde{\pi}_{k} \disteq \pi_{\bx_{n}^{k}}$ and $ \tilde{\pi}_{k} $ is independent of the paths $ \pi_{1},\ldots ,\pi_{k-1} $. Using the same argument as in \cite{Roy2016}, we have 
\begin{equation}\label{step1}
f_{n,\alpha}(\pn_{1},\ldots ,\pn_{k-1},\tilde{\pi}^{(n)}_{k})\Rightarrow (W_{\bx^{1}},\ldots ,W_{\bx^{k}}) \text{ as }n\rightarrow\infty .
\end{equation}
 Indeed, the induction hypothesis and the independence between $ (\pn_{1},\ldots ,\pn_{k-1}) $ and $ \tilde{\pi}^{(n)}_{k} $ imply that $ (\pn_{1},\ldots ,\pn_{k-1},\tilde{\pi}^{(n)}_{k})\Rightarrow (W_{\bx^{1}},\ldots ,W_{\bx^{k-1}},B_{\bx_{k}}) $ as $ n\rightarrow\infty $. Then, using Lemma 5.5 of \cite{Roy2016} and continuous mapping theorem, we obtain \eqref{step1}.

\paragraph{Step 2}
In this step, we construct another path starting from $ \bx_{n}^{k} $ which is not necessarily independent of $ \pi_{1},\ldots ,\pi_{k-1} $. Let $ \{U_{\bw}^{\ast}:\bw\in\Z^{2}\} $ be such that
\begin{equation}
\label{rregion}
U_{\bw}^{\ast}:= \begin{cases}U_{\bw} & \text{ for } \bw\in \bigcup_{i=1}^{k-1}\bigcup_{m=0}^{\infty}V(h_{m}(\bx_{n}^{i}),Y_{m+1}^{\bx_{n}^{i}}),\\
U^{'''}_{\bw} & \text{ otherwise}. 
\end{cases}
\end{equation}
Here
$ \bigcup_{i=1}^{k-1}\bigcup_{m=0}^{\infty}V(h_{m}(\bx_{n}^{i}),Y_{m+1}^{\bx_{n}^{i}}) $
 is the set of all vertices explored to construct $ \pi_{1},\ldots ,\pi_{k-1} $.

Let $\pi_k$ be the path starting from $ \bx_{n}^{k} $ constructed using $ \{U_{\bw}^{\ast}:\bw\in\Z^{2}\} $.  Hence, $ \pi_ {k} $  is independent of $ \pi_1, \ldots, \pi_ {k-1} $ as long as it stays away from the explored region $ \bigcup_{i=1}^{k-1}\bigcup_{m=0}^{\infty}V(h_{m}(\bx_{n}^{i}),Y_{m+1}^{\bx_{n}^{i}}) $  of $ \pi_1, \ldots, \pi_ {k-1} $.
In this step, we want to prove that
\begin{equation}\label{step2}
f_{n,\alpha}(\pn_{1},\ldots ,\pn_{k})\Rightarrow(W_{\bx^{1}},\ldots ,W_{\bx^{k}}) \text{ as }n\rightarrow\infty .
\end{equation}
Let $\pn_{1,k}$ and $\pn_{2,k}$ be such that
$f_{n,\alpha}(\pn_{1},\ldots ,\pn_{k-1},\tilde{\pi}^{(n)}_{k}):=(\pn_{1},\ldots ,\pn_{k-1},\pn_{1,k})$ and 
$f_{n,\alpha}(\pn_{1},\ldots ,\pn_{k-1},\pn_{k}):=(\pn_{1},\ldots ,\pn_{k-1},\pn_{2,k})$.
From \eqref{step1}, we need to show 
\[d_{\Pi}^{(k)}\big((\pn_{1},\ldots ,\pn_{k-1},\pn_{1,k}),(\pn_{1},\ldots ,\pn_{k-1},\pn_{2,k})\big)=d_{\Pi}(\pn_{1,k},\pn_{2,k})\prob 0 \text{ as }n\rightarrow \infty ,\]
i.e., to show \eqref{step2} we need to  prove that for any $ t>0 $,
\begin{equation}\label{step21}
\sup_{0\leq s\leq t}\vert \pn_{1,k}(s)-\pn_{2,k}(s)\vert \prob 0 \text{ as }n\rightarrow \infty .
\end{equation}

Fix $ t>0 $. For $ s>0 $, $1 \leq i \leq k$ and $ j\geq 1 $, let
$ N_{i}(s)=N_{i}(s,n):=\max\{m \geq 0:h_m({\bx_{n}^{i}})(2)\leq  n^{2}\gamma s\} $ and $t_j({\bx_{n}^{i}}):=  2(h_j({\bx_{n}^{i}})(2) - h_{j-1}({\bx_{n}^{i}})(2)) $.
Here we note that for the vertex ${\bx_{n}^{k}}$, we obtain the path $ \{h_j({\bx_{n}^{k}}):j\geq 0\} $ using the collection  $ \{U_{\bw}^{\ast}:\bw\in\Z^{2}\} $.
Taking 
\[W_{n,s}:=\bigcap_{i=1}^{k}\bigcap_{j=1}^{N_{i}(s) +1 }\big\{  t_j({\bx_{n}^{i}}) <\dfrac{\sigma n^{\beta}}{2}\big\},\]
we have
\begin{align*}
\P \{W_{n,s}^{c}\}&\leq\sum_{i=1}^{k}\sum_{j=1}^{-\bx_{n}^{i}(2)+\lfloor n^{2}\gamma s\rfloor +1}\P\big\{ t_j({\bx_{n}^{i}}) \geq \dfrac{\sigma n^{\beta}}{2}\big\}
\\ &\leq k(-\bx_{n}^{1}(2)+\lfloor n^{2}\gamma s\rfloor +1)\P\big\{t_1({\bx_{n}^{1}})\geq \dfrac{\sigma n^{\beta}}{2}\big\}
\rightarrow 0 \text{ as } n \to \infty \text{ for } \beta > 0.
\end{align*}
Note that the occurrence of the event $ W_{n,s} $ implies that, for each $i = 1, \ldots, k$,
 at each jump $h_m({\bx_{n}^{i}})$ comprising the path $\pi_i$ lying below the line $ \{y=n^{2}\gamma s\} $, $\pi_i$ explores an isosceles triangle above it of height at most 
$ \frac{\sigma n^{\beta}}{4} $.
By the definition of $ t_{n,k} $, for all $ 1\leq i\leq k-1 $ and  $ 0\leq s\leq t_{n,k} $, we have
\[\vert\pi^{(n)}_{k}(s)-\pn_{i}(s)\vert\geq n^{-\alpha}.\]
Hence, for $ 0\leq s\leq n^{2}\gamma  t_{n,k} $, we have
\[\min_{1\leq i\leq k-1}\vert\pi_{k}(s)-\pi_{i}(s)\vert\geq \sigma n^{1-\alpha}.\]
Fix $ 0 < \beta < 1-\alpha$. We observe that
\begin{itemize}
\item[(i)] if $ t\leq t_{n,k} $ then
$\min\{\vert\pi_{k}(s)-\pi_{i}(s)\vert : 1\leq i\leq k-1\}
\geq \sigma n^{1-\alpha}$ 
for $ s\leq n^{2}\gamma t $.
Also,  since $ \sigma n^{\beta}<\sigma n^{1-\alpha} $,  on the event $ W_{n,t} $, the path $ \pi_{k} $  stays away from the region $\bigcup_{i=1}^{k-1}\bigcup_{m=0}^{\infty}V(h_{m}(\bx_{n}^{i}),h_{m+1}(\bx_{n}^{i})(2)-h_{m}(\bx_{n}^{i})(2))$ (which follows the definition \eqref{rregion} of $U_{\bw}^{\ast}$) until the end of its first jump after line $ \{y=n^{2}\gamma t\} $
and therefore  $\pi_{k} $
agrees with $ \tilde{\pi}_{k} $  on $ [0,n^{2}\gamma t] $, i.e.  $ \pn_{1,k} $ and $ \pn_{2,k} $ agree on $ [0, t] $.
\item[(ii)] if $ t > t_{n,k} $ then the above argument gives that, on the event $ W_{n,t} $,
 $\pi_{k} $ agrees with $ \tilde{\pi}_{k} $  on $ [0,n^{2}\gamma   t_{n,k} ] $, i.e.  $ \pn_{1,k} $ and $ \pn_{2,k} $ agree on $ [0, t_{n,k}] $. Further, since $ \pi_{k}^{(n)}( t_{n,k})=\tilde{\pi}_{k}^{(n)}( t_{n,k}) $, by the definition of  $ \alpha $-coalescence map we have
$ \pn_{1,k} = \pn_{2,k} $ on $ [ t_{n,k},t] $. Therefore, $ \pn_{1,k} $ and $ \pn_{2,k} $ agree on $ [0, t] $ in this case too.
\end{itemize}
Finally $\P(W_{n,t}) \to 1 $ as $n \to \infty$, so
the statement \eqref{step21} is established.

\paragraph{Step 3}
To complete the verification of condition ($ \text{I}_{1} $), it suffices to show that 
\[d_{\Pi}^{(k)}\big((\pn_{1},\ldots ,\pn_{k-1},\pn_{2,k}),(\pn_{1},\ldots ,\pn_{k-1},\pn_{k})\big)=d_{\Pi}(\pn_{2,k},\pn_{k})\prob 0 \text{ as }n\rightarrow \infty ,\]
i.e., we need  to prove that for any $ t>0 $,
\begin{equation}\label{step31}
\sup_{0\leq s\leq t}\vert \pn_{2,k}(s)-\pn_{k}(s)\vert \prob 0~~~~~\text{as}~n\rightarrow \infty .
\end{equation}
Fix $ t>0 $.  When $ t\leq t_{n,k} $, the two paths $ \pn_{2,k} $ and $ \pn_{k} $ agree on $ [0,t] $. So let  $ t> t_{n,k} $.  Since $ \pn_{2,k} $ and $ \pn_{k} $ agree on $ [0,t_{n,k}] $, we have
$$
\sup_{0\leq s\leq t}\vert \pn_{2,k}(s)-\pn_{k}(s)\vert 
\leq\sup_{ t_{n,k}\leq s\leq t}\vert \pn_{i_{0}}(s)-\pn_{k}(s)\vert .
$$

 We restrict ourselves to the event $ W_{n,t_{n,k}} $. On this event, for the determination of the index  $i_0$ introduced in \eqref{ri0}, we only need to know the configuration in $\{\bw\in\Zz : \bw(2) \leq \lfloor n^{2}\gamma t_{n,k}\rfloor +\lfloor \sigma n^{\beta}/4\rfloor +1\} $. Moreover, our  graph is such that the displacement of the first coordinate during a time $s$ is at most $s$.  So, taking $n$ large  such that $ \lfloor\sigma n^{\beta}/4\rfloor\geq 1 $, we have
\begin{align}\label{a-star}
\big\vert \pi_{i_{0}}(s)-\pi_{k}(s)\big\vert \leq \sigma n^{1-\alpha}+\sigma n^{\beta} 
\text{ for }  n^{2}\gamma t_{n,k} \leq s\leq \lfloor n^{2}\gamma t_{n,k}\rfloor+\lfloor\sigma n^{\beta}/4\rfloor +1.
\end{align}
Taking $ a_{n,k}:=\big( \lfloor n^{2}\gamma t_{n,k}\rfloor+\lfloor\sigma n^{\beta}/4\rfloor +1 \big)/(n^{2}\gamma) $,  we have
\begin{align*}\label{step3(1)}
\sup_{ t_{n,k}\leq s\leq a_{n,k}}\vert \pn_{i_{0}}(s)-\pn_{k}(s)\vert \leq  n^{-\alpha}+ n^{\beta -1} \rightarrow 0~~~~~\text{as}~n\rightarrow \infty .
\end{align*}
Without loss of generality, assume that $ \pi_{i_{0}}(s) \leq  \pi_{k}(s) $ for $ s\geq 0 $. From \eqref{a-star}   we can find $  u_{n} ,v_{n} \in \Z  $   such that $ u_{n} <\pi_{i_{0}}(n^{2}\gamma a_{n,k}) $, $ v_{n}>\pi_{k}(n^{2}\gamma a_{n,k}) $ and $ (v_{n} -u_{n})/n\rightarrow 0 $. Let $ \bu_{n}:=(u_{n},n^{2}\gamma a_{n,k}) $ and $ \bv_{n}:=(v_{n},n^{2}\gamma a_{n,k}) $.
By the non-crossing property of the paths, $ \pn_{i_{0}} $ and $ \pn_{k} $ lie between the paths $ \pn_{\bu_{n}} $ and $ \pn_{\bv_{n}} $ from $ a_{n,k} $ onwards, hence when $ t>a_{n,k} $,
\begin{equation}\label{step39}
\sup_{ a_{n,k}\leq s\leq t}\vert \pn_{i_{0}}(s)-\pn_{k}(s)\vert \leq  \sup_{ a_{n,k}\leq s\leq t}\vert \pn_{\bu_{n}}(s)-\pn_{\bv_{n}}(s)\vert .
\end{equation} 
Now we apply the following proposition:
\begin{proposition}\textup{\textbf{(Proposition 5.3 of \cite{Roy2016})}} 
For $ \bx_n = (x_n , 0) $, $ \by_n = (y_n, 0)$ with $ x_n < 0 < y_n $ and $  (y_n - x_n )/n \to 0 $ as $ n\rightarrow\infty $, we have
\begin{equation*}
 ( \pi_n^{\bx_n}, \pi_n^{\by_n}) \Rightarrow ( B^{{\bf 0}}, B^{{\bf 0}}).
\end{equation*}
\end{proposition}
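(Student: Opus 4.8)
The plan is to obtain the joint convergence from the one-dimensional invariance principle already established (Proposition~\ref{I11}), the non-crossing property of the paths, and a short first-moment identification of the limit; no quantitative coalescence rate is needed. First I would record the marginal convergence, then tightness of the pair, and finally pin down every subsequential limit.

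Since $\bx_n=(x_n,0)$ and $\by_n=(y_n,0)$ have time-coordinate $0$, and since $x_n<0<y_n$ with $(y_n-x_n)/n\to0$ forces both $x_n/(n\sigma)\to0$ and $y_n/(n\sigma)\to0$, Proposition~\ref{I11} together with the remark following it gives $\pi_n^{\bx_n}\weak B_{\mathbf 0}$ and $\pi_n^{\by_n}\weak B_{\mathbf 0}$ in $(\Pi,d_\Pi)$; here I note that Proposition~\ref{I11} already absorbs the first increment issuing from the (possibly non-open) starting vertices, since that increment has exponentially decaying tails and is therefore $o_{\mathbb P}(n)$ after the diffusive scaling. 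In particular each of $\{\pi_n^{\bx_n}:n\ge1\}$ and $\{\pi_n^{\by_n}:n\ge1\}$ is tight in $(\Pi,d_\Pi)$, so the pair $(\pi_n^{\bx_n},\pi_n^{\by_n})$ is tight in $(\Pi^2,d_\Pi^{(2)})$.

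Next I would identify every subsequential limit. Let $(\pi,\pi')$ be a weak limit along a subsequence. Each of $\pi,\pi'$ is then a standard Brownian motion started from $(\mathbf 0,0)$, so $\varsigma_\pi=\varsigma_{\pi'}=0$ and $\E[\pi(t)]=\E[\pi'(t)]=0$ for every $t\ge0$. Because $x_n<y_n$, the argument establishing planarity in the Introduction (which uses only \eqref{new1} and hence applies verbatim to the edges $\langle\bu,\tilde\bu\rangle$ issuing from an arbitrary, not necessarily open, vertex $\bu$) shows the two discrete paths never cross, so $\pi_n^{\bx_n}(t)\le\pi_n^{\by_n}(t)$ for all $t\ge0$ and all $n$. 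For each fixed $t>0$ the set $\{(f,g)\in\Pi^2:f(t)\le g(t)\}$ is closed in $(\Pi^2,d_\Pi^{(2)})$ when restricted to pairs of paths with starting time $0$ (the evaluation $f\mapsto f(t)$ being $d_\Pi$-continuous there), so by the Portmanteau theorem $\P\{\pi(t)\le\pi'(t)\}=1$ for all $t\ge0$. Thus $\pi'(t)-\pi(t)\ge0$ a.s.\ while $\E[\pi'(t)-\pi(t)]=0$, which forces $\pi(t)=\pi'(t)$ a.s.\ for each $t\ge0$; intersecting over a countable dense set of $t$ and using path continuity, $\pi=\pi'$ a.s. Hence every subsequential limit of $(\pi_n^{\bx_n},\pi_n^{\by_n})$ has the law of $(B_{\mathbf 0},B_{\mathbf 0})$, and, together with tightness, this gives $(\pi_n^{\bx_n},\pi_n^{\by_n})\weak(B_{\mathbf 0},B_{\mathbf 0})$.

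The part needing the most care is bookkeeping rather than probability: checking that Proposition~\ref{I11} genuinely covers paths launched from vertices that may fail to be open, and that the non-crossing relation indeed survives the passage to the weak limit (continuity of the evaluation maps at times $t>0$ in the metric $d_\Pi$). A more computational alternative would replace the third paragraph by an explicit coalescence estimate: by Theorem~\ref{rmart}, $|Z_l(\bx_n,\by_n)|$ is a non-negative martingale absorbed at $0$ whose one-step conditional variances are bounded below, so an optional-stopping (Foster--Lyapunov) argument bounds the expected number $\nu_n$ of regeneration steps to coalescence by $O((y_n-x_n)^2)$; combining this with Proposition~\ref{ppppp} and Wald's identity yields $\E[T_{\nu_n}]=O((y_n-x_n)^2)=o(n^2)$ for the real coalescence time $T_{\nu_n}$, whence the two scaled paths coincide on $[\delta,\infty)$ with probability tending to $1$ for every $\delta>0$, which again identifies the limit. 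I would prefer the soft argument above, since it is shorter and draws only on facts already in hand.
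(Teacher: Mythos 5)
Your main argument is correct, but it takes a genuinely different route from the paper's treatment of this statement: the paper does not prove the proposition at all, it imports it as Proposition 5.3 of \cite{Roy2016}, and the proof there (like the machinery this paper itself builds in Section 4.3, Propositions \ref{bound on nu,v} and \ref{bound on Tnu,v} via Theorem \ref{rmart} and Theorem \ref{Anish}) is quantitative: one proves a coalescing-time tail bound $\P\{T_{\n}\geq t\}\leq C x/\sqrt{t}$, so that with initial separation $x=y_n-x_n=o(n)$ and $t=\delta n^{2}$ the two scaled paths have merged by any fixed time $\delta>0$ with probability tending to one, and the joint limit then follows from the marginal invariance principle. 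Your route is soft: marginal convergence from Proposition \ref{I11} and the remark following it (valid here since $|x_n|\vee y_n\leq y_n-x_n=o(n)$), tightness of the pair, the ordering $\pi_n^{\bx_n}(t)\leq\pi_n^{\by_n}(t)$ from the non-crossing property (and you are right that the planarity argument of the Introduction uses only \eqref{new1}, hence covers non-open starting vertices), closedness of $\{(f,g):\varsigma_f=\varsigma_g=0,\ f(t)\leq g(t)\}$ in the product metric so that the ordering passes to every subsequential limit, and finally the observation that a nonnegative difference of two limiting standard Brownian motions, each with mean zero at every fixed time, must vanish at each $t$ and hence identically by path continuity. Each step checks out, and this argument buys brevity and dispenses with any coalescence-rate estimate; what the quantitative route buys in exchange is an actual rate of coalescence, which is what the paper needs elsewhere (the verification of (B$_1$) uses exactly Proposition \ref{bound on Tnu,v}).

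One caution about the ``more computational alternative'' you sketch at the end: the claim that optional stopping bounds the expected number of regeneration steps to coalescence by $O((y_n-x_n)^{2})$ is false. For a non-negative martingale absorbed at $0$ whose one-step conditional variances are bounded below (simple random walk being the basic example), the absorption time has a tail of order $x/\sqrt{t}$ and therefore infinite expectation; the same holds here, so $\E[\nu_n]$ and $\E[T_{\nu_n}]$ cannot be bounded by $o(n^{2})$, and Wald's identity cannot be applied in the way you propose. The correct quantitative substitute is precisely the tail bound of Propositions \ref{bound on nu,v} and \ref{bound on Tnu,v}, applied with $x=y_n-x_n$ and $t=\delta n^{2}$. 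Since you rely on the soft argument and offer this only as an aside, your proof stands as written; just do not swap the aside in for the main argument without that correction.
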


From this proposition, we conclude that 
\begin{align*}\label{step3(2)}
\sup_{ a_{n,k}\leq s\leq t}\vert \pn_{\bu_{n}}(s)-\pn_{\bv_{n}}(s)\vert \prob 0 ~~~~~\text{as}~n\rightarrow \infty , 
\end{align*}
which along with \eqref{step39} establishes \eqref{step31}.

 \subsection{Verification of condition ($ \text{B} _{1} $)}
 We first estimate the coalescence time of two paths of our model.
\begin{proposition}\label{bound on nu,v}
For $ \bu=(x,0) $ with $ x\in\N $ and $ \bv=(0,0) $, let $ \n $ be defined as
\begin{align*}
\n &=\inf\{n \geq 1 :Z_{n}(\bu ,\bv)=0\},
\end{align*}
where $Z_{n}(\bu ,\bv)$ is as in \eqref{rZ}. Then, for some constant $ C_{11} $, we have
\begin{equation*}
\P \{\n> t\}\leq \dfrac{C_{11}x}{\sqrt{t}} ~~~\text{ for  $ t>0 $}.
\end{equation*}

\end{proposition}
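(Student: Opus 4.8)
The plan is to exploit the martingale structure of $Z_{n}(\bu ,\bv)$ from Theorem \ref{rmart} together with the exponential tail bounds of Proposition \ref{ppppp}. Write $Z_{n}:=Z_{n}(\bu ,\bv)$. By Theorem \ref{rmart} and the non-crossing of the paths from $\bu$ and $\bv$ (as in the discussion following that theorem), $\{Z_{n\wedge \n}:n\geq 0\}$ is a non-negative martingale with $Z_{0}=x$, absorbed at $0$, and, arguing exactly as in the proof of Theorem \ref{tree}(i) for $d=2$, $\n<\infty$ almost surely. Put $\xi_{n+1}:=2(T_{n+1}-T_{n})$; the estimate $|h_{\tau_{n+1}}(\bw)(1)-h_{\tau_{n}}(\bw)(1)|\leq T_{n+1}-T_{n}$ used in Theorem \ref{rmart} gives $|Z_{n+1}-Z_{n}|\leq \xi_{n+1}$, and by the renewal structure the $\xi_{n}$ are i.i.d., independent of $\mathscr{F}_{T_{n}}$, with exponentially decaying tails; hence $c_{0}:=\E[\xi_{1}^{2}]<\infty$ and $\E[\xi_{1}^{2}\mathbf{1}\{\xi_{1}>M\}]\to 0$ as $M\to\infty$. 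I will also use that there is a constant $c_{\ast}>0$ with $\E[(Z_{n+1}-Z_{n})^{2}\mid \mathscr{F}_{T_{n}}]\geq c_{\ast}$ on $\{Z_{n}\neq 0\}$: this follows from the construction of Section \ref{s2}, since conditionally on a simultaneous renewal the next joint increment of the horizontal separation of two walks at distance $z\geq 1$ is a non-degenerate (indeed symmetric) random variable whose variance does not degenerate as $z$ ranges over the positive integers.

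Given these ingredients, fix $\lambda>0$ and let $\rho_{\lambda}:=\inf\{n:Z_{n}\geq \lambda\}$. Applying optional stopping to the non-negative martingale $Z_{n\wedge \n}$ at the bounded time $n\wedge \rho_{\lambda}\wedge \n$ and letting $n\to\infty$ yields $\lambda\,\P\{\rho_{\lambda}<\n\}\leq \E[Z_{0}]=x$, i.e.\ $\P\{\rho_{\lambda}<\n\}\leq x/\lambda$. Next I bound $\E[\rho_{\lambda}\wedge \n]$: since $\E[(Z_{n+1}-Z_{n})^{2}\mid\mathscr{F}_{T_{n}}]\geq c_{\ast}$ on $\{n<\rho_{\lambda}\wedge\n\}$, the process $Z_{n\wedge\rho_{\lambda}\wedge\n}^{2}-c_{\ast}(n\wedge\rho_{\lambda}\wedge\n)$ is a submartingale, so after truncating at $n$ and passing to the limit one gets $c_{\ast}\,\E[\rho_{\lambda}\wedge\n]\leq \E[Z_{\rho_{\lambda}\wedge\n}^{2}]$. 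Because $Z$ stays below $\lambda$ strictly before $\rho_{\lambda}\wedge\n$ and equals $0$ at $\n$, one has $Z_{\rho_{\lambda}\wedge\n}^{2}\leq 2\lambda^{2}\mathbf{1}\{\rho_{\lambda}<\n\}+2\xi_{\rho_{\lambda}\wedge\n}^{2}$; taking expectations, using $\P\{\rho_{\lambda}<\n\}\leq x/\lambda$, and splitting the overshoot $\E[\xi_{\rho_{\lambda}\wedge\n}^{2}]$ at a level $M$ chosen (depending only on $c_{\ast}$ and the law of $\xi_{1}$) so that $\E[\xi_{1}^{2}\mathbf{1}\{\xi_{1}>M\}]\leq c_{\ast}/8$ — the large part being absorbed into the left-hand side via independence of $\xi_{k}$ from the event $\{\rho_{\lambda}\wedge\n\geq k\}\in\mathscr{F}_{T_{k-1}}$ — gives $\E[\rho_{\lambda}\wedge\n]\leq C\lambda x$ for all $\lambda$ large, with $C$ depending only on $c_{\ast}$ and $c_{0}$.

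Finally I combine the two estimates. For $t$ large take $\lambda=\lceil\sqrt{t}\,\rceil$. Since $\{\n>t\}\subseteq\{\rho_{\lambda}<\n\}\cup\{\rho_{\lambda}\wedge\n>t\}$, Markov's inequality gives
\[
\P\{\n>t\}\leq \P\{\rho_{\lambda}<\n\}+\frac{\E[\rho_{\lambda}\wedge\n]}{t}\leq \frac{x}{\lambda}+\frac{C\lambda x}{t}\leq \frac{C_{11}x}{\sqrt{t}},
\]
for a suitable constant $C_{11}$; the remaining range of small $t$, on which $x/\sqrt{t}$ is bounded below (recall $x\in\N$), is handled by enlarging $C_{11}$. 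The main obstacle is the uniform lower bound $c_{\ast}>0$ on the per-step conditional variance of $Z_{n}$: it is intuitively clear but must be read off from the explicit construction of Section \ref{s2}, using that two walks at horizontal distance $z$ do not evolve in perfect lockstep and that the resulting variance stays bounded away from $0$ over all $z\geq 1$. A secondary technical point is the treatment of the overshoot $\xi_{\rho_{\lambda}\wedge\n}$ at the (possibly unbounded) level-crossing time, which forces the truncation of $\xi_{1}$ at level $M$ rather than a cruder Wald-type bound.
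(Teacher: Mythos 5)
Your plan is a genuinely different route from the paper's. The paper does not re-derive the hitting-time estimate at all: it quotes the collision-time result of Coupier--Saha--Sarkar--Tran (stated as Theorem \ref{Anish}) and merely verifies its hypotheses for $Z_n(\bu,\bv)$, namely that it is a non-negative martingale with respect to $\{\mathscr{F}_{T_n}\}$ (Theorem \ref{rmart}), that $\E\big[(Z_{n+1}-Z_n)^2\mid\mathscr{F}_{T_n}\big]$ is bounded below on $\{Z_n>0\}$, and that $\E\big[|Z_{n+1}-Z_n|^3\mid\mathscr{F}_{T_n}\big]$ is bounded above (via the $J_i$, $\tau^M$ construction of Proposition \ref{ppppp} and Lemma \ref{Lemma 2.7 of RSS}). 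You instead re-prove the $x/\sqrt{t}$ bound from scratch by optional stopping at $\rho_\lambda\wedge\n$ together with the submartingale $Z^2_{n\wedge\rho_\lambda\wedge\n}-c_*(n\wedge\rho_\lambda\wedge\n)$; that scheme is sound, only needs second moments, and is in effect a proof of the quoted black-box theorem.

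There are, however, two places where your write-up assumes exactly what has to be checked. First, the uniform lower bound $c_*>0$ on $\E\big[(Z_{n+1}-Z_n)^2\mid\mathscr{F}_{T_n}\big]$ on $\{Z_n>0\}$, which you yourself flag as ``the main obstacle'', is left unproved; it is the one substantive verification in the paper's proof, obtained by exhibiting an explicit local configuration at a renewal (forcing one walker to step diagonally and the other vertically, so that the next step is again a renewal and $Z_{n+1}-Z_n=1$), which gives the bound $(1-p)^4p^2$ uniformly in the separation. Without this two-line computation your submartingale step has no content. Second, your assertion that $\xi_{n+1}=2(T_{n+1}-T_n)$ are i.i.d.\ and independent of $\mathscr{F}_{T_n}$ is not correct as stated: although the half-plane above a renewal level is fresh, the law of the next inter-renewal time depends on the current separation $Z_n$, which is $\mathscr{F}_{T_n}$-measurable (e.g.\ when $Z_n=0$ every step is a renewal); the paper accordingly claims only independence of the family $\{T_{l+1}-T_l\}$, not identical distribution. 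What is true, and what both your domination of $|Z_{n+1}-Z_n|$ and your overshoot/truncation step actually require, is a conditional tail bound $\P\{T_{n+1}-T_n\geq m\mid\mathscr{F}_{T_n}\}\leq C_3\mathrm{e}^{-C_4 m}$ uniform in the configuration at the renewal, which the proof of Proposition \ref{ppppp} does supply; with it, the identity $\E[\xi_k^2\mathbf{1}\{\xi_k>M\}\mathbf{1}\{\kappa\geq k\}]=\E[\xi_1^2\mathbf{1}\{\xi_1>M\}]\P\{\kappa\geq k\}$ must be replaced by the corresponding conditional inequality. With these two repairs (the first being the genuine gap), your argument closes and yields the proposition; citing the collision-time theorem, as the paper does, simply shortcuts your Steps 2 and 3.
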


\begin{proof}
For the proof, we use the following result from \cite{Anish2}.

\vspace{3mm}
\begin{theorem} \label{Anish}
Let $ \{V_{n}:n \geq 0\} $ be a discrete-time, positive real valued martingale with respect to a filtration $ \{\mathcal{G}_{n}:n \geq 0\} $. Suppose there exist constants $ C_{12},C_{13}>0 $ so that for any $ n \geq 0 $, on $ \{V_{n}>0\} $,  
\begin{align*}
\E\big[(V_{n+1}-V_{n})^{2}~\big |~ \mathcal{G}_{n}\big]\geq C_{12}~~~\text{and}~~~
\E\big[|V_{n+1}-V_{n}|^{3}~\big |~  \mathcal{G}_{n}\big]\leq C_{13},
\end{align*}
almost surely. If $ \tau^{V}:= \inf\{n \geq 1 : V_{n} = 0\} $, then there is a constant $ C_{14}>0 $ such that for any positive integer $ x $ and positive real valued number $ t $,
\[\P\big\{\tau^{V}> t~\big |~  V_{0}=x\big\}\leq C_{14}x/\sqrt{t}.\]
\end{theorem}
\vspace{3mm}

 Returning to the proof of the proposition, first note that from Theorem \ref{rmart}, $\{Z_{n}(\bu ,\bv):n\geq 0\}$ is a non-negative martingale with respect to the filtration $ \{\mathscr{F}_{T_{n}(\bu ,\bv)}:n\geq 0\} $.
With $\tau_n= \tau_n(\bu ,\bv)$ and $T_n = T_n(\bu ,\bv)$ as defined prior to Lemma \ref{Lemma 2.6 of RSS}, on the event $ \{Z_{n}(\bu,\bv)>0\} $, we have
\begin{align*}
&\E\big[\big(Z_{n+1}(\bu,\bv)-Z_{n}(\bu,\bv)\big)^{2}~\big |~  \mathscr{F}_{T_{n}}\big]\\&~
\geq \P\big\{Z_{n+1}(\bu,\bv)-Z_{n}(\bu,\bv)=1~\big |~  \mathscr{F}_{T_{n}}\big\}\\&~
\geq \P\big\{h_{\tau_{n+1}}(\bu)=h_{\tau_{n}}(\bu)+(1,1),h_{\tau_{n+1}}(\bv)=h_{\tau_{n}}(\bv)+(0,1)~\big |~  \mathscr{F}_{T_{n}}\big\}\\&~
\geq  (1-p)^{4}p^{2}
\end{align*}
almost surely and 
\begin{align}
\label{rsum3}
&\E\Big[\big|Z_{n+1}(\bu,\bv)-Z_{n}(\bu,\bv)\big|^{3}~\big |~  \mathscr{F}_{T_{n}}\big]\nonumber\\&~
=\E\Big[\Big|\sum_{i=\tau_{n}+1}^{\tau_{n+1}}\big (h_{i}(\bu)(1)-h_{i-1}(\bu)(1)\big )-\sum_{i=\tau_{n}+1}^{\tau_{n+1}}\big (h_{i}(\bv)(1)-h_{i-1}(\bv)(1)\big )\Big|^{3}~\big |~  \mathscr{F}_{T_{n}}\Big]\nonumber\\&~
\leq \E\Big[\Big(2\max\Big\{\sum_{i=\tau_{n}+1}^{\tau_{n+1}}\big |h_{i}(\bu)(1)-h_{i-1}(\bu)(1)\big |,\sum_{i=\tau_{n}+1}^{\tau_{n+1}}\big|h_{i}(\bv)(1)-h_{i-1}(\bv)(1)\big |\Big\}\Big)^{3}~\big |~  \mathscr{F}_{T_{n}}\Big]\nonumber\\&~
\leq 16\E\Big[\Big(\sum_{i=\tau_{n}+1}^{\tau_{n+1}}\big |h_{i}(\bu)(1)-h_{i-1}(\bu)(1)\big |\Big)^{3}~\big |~  \mathscr{F}_{T_{n}}\Big]\nonumber\\&~
\leq 16\E\Big[\Big(\sum_{i=1}^{\sigma_{n+1}}\big (h_{\tau_{n}+i}(\bu)(2)-h_{\tau_{n}+i-1}(\bu)(2)\big )\Big)^{3}~\big |~  \mathscr{F}_{T_{n}}\Big]\nonumber\\
& ~ \leq 16  \E\Big[\Big(\sum_{i=1}^{\tau^M} J_i\Big)^3\Big],
\end{align}
 almost surely, where $J_i $ and $ \tau^M $ are as defined in the proof of Proposition \ref{ppppp}, and in the fifth line we have used the fact that $ \vert h_{m}(\bw)(1)-h_{m-1}(\bw)(1)\vert\leq h_{m}(\bw)(2)-h_{m-1}(2) $ for every $\bw \in \Z^2$ and $ m\geq 1 $.
As in the last paragraph of the proof of Proposition \ref{ppppp}
we now obtain that the expectation in \eqref{rsum3} is finite. This completes the proof.
\end{proof}

\begin{proposition}\label{bound on Tnu,v}
Let $ \bu=(x,0) $ with $ x\in\N $ and $ \bv=(0,0) $. Then, for some positive constant $ C_{15} $, we have
\begin{equation*}
\P \big\{T_{\n}(\bu ,\bv)\geq t\big\}\leq \dfrac{C_{15}x}{\sqrt{t}} ~~~\text{ for $ t >0 $}.
\end{equation*}

\end{proposition}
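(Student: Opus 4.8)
The plan is to combine the bound on the number of regeneration steps given by Proposition~\ref{bound on nu,v} with the exponentially decaying tails of the inter‑regeneration times from Proposition~\ref{ppppp}, splitting the event $\{T_{\n}(\bu,\bv)\geq t\}$ according to whether the number of regenerations $\n$ is large or small. Recall that $T_{\n}(\bu,\bv)=\sum_{l=1}^{\n}\big(T_l(\bu,\bv)-T_{l-1}(\bu,\bv)\big)$ with $T_0(\bu,\bv)=0$, and that, since the construction starts with $\Delta_0=\emptyset$, the renewal structure makes $\{T_l(\bu,\bv)-T_{l-1}(\bu,\bv):l\geq 1\}$ a sequence of i.i.d.\ random variables; by Proposition~\ref{ppppp} each has an exponentially decaying tail, so $M(\gamma):=\E[\mathrm{e}^{\gamma(T_1(\bu,\bv)-T_0(\bu,\bv))}]<\infty$ for all $\gamma$ in some interval $(0,\gamma_0]$, and moreover $M(\gamma)>1$ because $T_1(\bu,\bv)-T_0(\bu,\bv)\geq 1$ almost surely. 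Also note $\n<\infty$ almost surely, e.g.\ by letting $t\to\infty$ in Proposition~\ref{bound on nu,v}.

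The key elementary observation is that $l\mapsto T_l(\bu,\bv)$ is non‑decreasing, so for every integer $s\geq 1$ we have $\{\n\leq s\}\subseteq\{T_{\n}(\bu,\bv)\leq T_s(\bu,\bv)\}$. This requires no independence between $\n$ and the inter‑regeneration times (which would in general fail, since $\n$ is measurable with respect to the first‑coordinate displacements $Z_l(\bu,\bv)$, which are correlated with the time increments within a block). Hence, using Proposition~\ref{bound on nu,v},
\[
\P\{T_{\n}(\bu,\bv)\geq t\}\leq \P\{\n>s\}+\P\{T_s(\bu,\bv)\geq t\}\leq \frac{C_{11}x}{\sqrt{s}}+\P\{T_s(\bu,\bv)\geq t\}.
\]
I would then bound the second term by a Chernoff estimate: fixing $\gamma\in(0,\gamma_0]$, for any integer $s\geq 1$ one has $\P\{T_s(\bu,\bv)\geq t\}\leq \mathrm{e}^{-\gamma t}M(\gamma)^s$. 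Choosing $s=\lceil \lambda t\rceil$ with $\lambda>0$ small enough that $\lambda\log M(\gamma)<\gamma/2$, we get $M(\gamma)^s\leq M(\gamma)\,\mathrm{e}^{\lambda t\log M(\gamma)}$, so $\P\{T_s(\bu,\bv)\geq t\}\leq M(\gamma)\,\mathrm{e}^{-\gamma t/2}$, while $C_{11}x/\sqrt{s}\leq C_{11}x/\sqrt{\lambda t}$. Therefore
\[
\P\{T_{\n}(\bu,\bv)\geq t\}\leq \frac{C_{11}\lambda^{-1/2}x}{\sqrt{t}}+M(\gamma)\,\mathrm{e}^{-\gamma t/2}\qquad\text{for all }t>0.
\]

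To finish, since $x\geq 1$ and $\sqrt{t}\,\mathrm{e}^{-\gamma t/2}\to 0$, there is $t_0>0$ with $M(\gamma)\,\mathrm{e}^{-\gamma t/2}\leq x/\sqrt{t}$ for all $t\geq t_0$; for $t<t_0$ the asserted inequality is trivial as soon as $C_{15}\geq\sqrt{t_0}$, because then $C_{15}x/\sqrt{t}\geq 1$. Taking $C_{15}:=\max\{\sqrt{t_0},\,C_{11}\lambda^{-1/2}+1\}$ gives the claim for all $t>0$. The only delicate point is the monotonicity observation that decouples $\n$ from the inter‑regeneration times; the rest is the routine estimate of a random sum of light‑tailed i.i.d.\ summands whose count is controlled in probability, and optimizing the split $s\asymp\lambda t$ so that the polynomial term $x/\sqrt{s}$ produces exactly the $x/\sqrt{t}$ rate while the remainder is exponentially small.
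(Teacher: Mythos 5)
Your proposal is correct and follows essentially the same route as the paper: split on whether $\n$ exceeds a threshold proportional to $t$, use the monotonicity of $l\mapsto T_l(\bu,\bv)$ together with Proposition~\ref{bound on nu,v} for the count, and a Chernoff bound based on the exponential moments of the inter-regeneration times (Proposition~\ref{ppppp}) for the time sum, absorbing the exponentially small remainder into the constant. The only differences are cosmetic: the paper ties the threshold to the Chernoff exponent (via $s=\lfloor\alpha t/2\rfloor$ and the condition $\ln\delta<2$) and tracks the first increment separately through $M(s)=\max\{\E[\mathrm{e}^{sT_1}],\E[\mathrm{e}^{s(T_2-T_1)}]\}$, while you decouple the two parameters and handle small $t$ explicitly.
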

\begin{proof}
 Take $ T_{i}=T_{i}(\bu ,\bv) $ for $ i\geq 0 $. We know that $ \{T_{i}-T_{i-1}:i \geq 1\} $ is a sequence  of independent  random variables and that $ T_{2}-T_{1},\; T_{3}-T_{2},\ldots $ are identically distributed. Since $ T_{1} $ and $ T_{2}-T_{1} $ have exponentially decaying tail probabilities, there is $ s_{0}>0 $ such that for all $ 0\leq s< s_{0} $,
 \[M(s):=\max\big\{\E[\mathrm{e}^{sT_{1}}],\E[\mathrm{e}^{s(T_{2}-T_{1})}]\big\}<\infty .\]
Since $ M $ is an increasing and continuous function on $  (0,s_0) $ with $ M(0)=1 $, we can find $ 0<\alpha<s_0 $ so that $ M(\alpha)\leq\delta $ for some $ \delta \geq 1 $ with  $ \ln \delta <2 $.
 So, for any $ t>0 $, using Proposition \ref{bound on nu,v},
 \begin{align*}
\P \big\{T_{\n}\geq t\big\}
&\leq\P \big\{T_{\n}\geq t,\n \leq \dfrac{\alpha t}{2} \big\}+\P\big \{\n > \dfrac{\alpha t}{2}\big\}\\&
\leq \P \big\{T_{\lfloor \frac{\alpha t}{2}\rfloor }\geq t\big\}+C_{11}x\big(\dfrac{\alpha t}{2}\big)^{-\frac{1}{2}}\\&
= \P \big\{\mathrm{e}^{\alpha T_{\lfloor \frac{\alpha t}{2}\rfloor }}\geq \mathrm{e}^{\alpha t}\big\}+C_{11}x\big(\dfrac{\alpha t}{2}\big)^{-\frac{1}{2}}\\&
\leq \mathrm{e}^{-\alpha t}\E\Big[\exp\Big\{\alpha\sum_{i=1}^{\lfloor \frac{\alpha t}{2}\rfloor} (T_{i}-T_{i-1} )\Big\}\Big]+C_{11}x\big(\dfrac{\alpha t}{2}\big)^{-\frac{1}{2}}\\&
=\mathrm{e}^{-\alpha t}\prod_{i=1}^{\lfloor \frac{\alpha t}{2}\rfloor}\E\big[\mathrm{e}^{\alpha (T_{i}-T_{i-1})}\big]+C_{11}x\big(\dfrac{\alpha t}{2}\big)^{-\frac{1}{2}}\\&
\leq \mathrm{e}^{-\alpha t}(M(\alpha))^{\frac{\alpha t}{2}}+C_{11}x\big(\dfrac{\alpha t}{2}\big)^{-\frac{1}{2}}\\&
\leq \mathrm{e}^{-\alpha t(1-\frac{\ln\delta}{2})}+C_{11}x\big(\dfrac{\alpha t}{2}\big)^{-\frac{1}{2}}\\&
\leq \dfrac{C_{15}x}{\sqrt{t}},
\end{align*}
for a suitable constant $C_{15}$,  where we also take $ \prod_{i=1}^{0}=0 $
\end{proof}

Finally, consider $ t,\varepsilon>0 $, and take $ \bu=(\lceil n\sigma\varepsilon\rceil +3,0) $ and $ \bv=(0,0) $.  Using translation invariance and the non-crossing property of our paths, we have
 \begin{align*}
&\limsup_{n\rightarrow\infty} \sup_{(a,t_{0})\in\mathbb{R}^{2}}\mathbb{P}\big\{\eta_{\bar{\mathcal{X}}_{n}(\sigma ,\gamma)}(t_{0},t;a,a+\varepsilon)\geq 2\big\}\\&
~\leq \limsup_{n\rightarrow\infty}\mathbb{P}\big\{\eta_{\mathcal{X}}(0 ,n^{2}\gamma t ;0,\lceil n\sigma\varepsilon\rceil +3)\geq 2\big\}\\&
~\leq \limsup_{n\rightarrow\infty}\P\big\{\pi_{\bu}(n^{2}\gamma t)\neq\pi_{\bv}(n^{2}\gamma t)\big\}\\
&~\leq  \limsup_{n\rightarrow\infty}\P\big\{T_{\n}(\bu ,\bv)\geq n^{2}\gamma t\big\}\\
&~\leq \dfrac{C_{15}\sigma\varepsilon}{\sqrt{\gamma t}}.
 \end{align*}
Letting $ \varepsilon\downarrow 0 $, condition ($ \text{B}_{1} $) is verified.

 \subsection{Verification of condition ($ \text{B} _{2} $)}
We finish the proof of Theorem \ref{BW} by verifying condition ($ \text{B} _{2} $). We first estimate the expected time of the first collision of two of the three paths starting from the same time level. Let $ x,y,z \in\Z$ with $ x<y<z $. For the processes starting from $ (x,0) $, $ (y,0) $ and $ (z,0) $, and $ l\geq 0 $, let 
\begin{align*}
&r_{l}^{x,y,z}:=\min\big\{h_{l}((x,0))(2),h_{l}((y,0))(2),h_{l}((z,0))(2)\big\},\\&
s_{l}^{x,y,z}:=\max\big\{h_{l}((x,0))(2),h_{l}((y,0))(2),h_{l}((z,0))(2)\big\}.
\end{align*}
Also, taking 
\[ \tau_{l}^{x,y,z}:=\begin{cases} 0 & \text{if } l = 0,\\
\inf\{n>\tau_{l-1}^{x,y,z} :r_{n}^{x,y,z}=s_{n}^{x,y,z}\} & \text{if } l\geq 1, \end{cases}\]
 let  $ T_{l}^{x,y,z}:=h_{\tau_{l}^{x,y,z}}((x,0))(2) $ and, as in Section \ref{s2}, let  $ \mathscr{F}_{t}=\sigma(\{U_{\bw}:\bw\in\Zz,\bw(2)\leq t\}) $ for $ t\geq 0 $.
\begin{proposition}\label{a-proposition1}
For all $ m\geq 1 $ and  positive constants $ C_{16} $ and $ C_{17} $, we have
\begin{align*}
\P\{T_{1}^{x,y,z}\geq m\}\leq C_{16}\exp\{-C_{17}m\}.
\end{align*}
\end{proposition}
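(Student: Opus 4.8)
The plan is to imitate the proof of Proposition~\ref{ppppp}, with the two paths there replaced by the three paths started from $(x,0)$, $(y,0)$, $(z,0)$. Write $\bx_1:=(x,0)$, $\bx_2:=(y,0)$, $\bx_3:=(z,0)$, and abbreviate $r_n:=r_n^{x,y,z}$, $s_n:=s_n^{x,y,z}$, $\tau_1:=\tau_1^{x,y,z}$ and $D_n:=s_n-r_n$. As in Proposition~\ref{ppppp}, at each step with $0\le n<\tau_1$ only the path(s) currently at the lowest level $r_n$ are updated, and each such update raises a path by an amount dominated by a geometric$(p)$ random variable, obtained by scanning a ``fresh'' half-cone above $h_n(\bx_i)$ that avoids the regions already explored by the other two paths (padding with auxiliary independent geometric$(p)$ variables when fewer than three paths move). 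Taking $J_{n+1}$ to be the maximum of these three geometric variables, one obtains a sequence $\{J_n:n\ge 1\}$ of i.i.d.\ positive integer valued random variables with $\P\{J_1=1\}=p^3>0$ and $\P\{J_1\ge m\}\le 3(1-p)^{m-1}$, such that
\[
\max_{1\le i\le 3}\bigl(h_{n+1}(\bx_i)(2)-h_n(\bx_i)(2)\bigr)\le J_{n+1}, \qquad 0\le n<\tau_1 .
\]
In particular $r_{n+1}-r_n\le J_{n+1}$ for $0\le n<\tau_1$, since any path realizing the minimum level $r_n$ is at the bottom and is therefore moved to a level at most $r_n+J_{n+1}$.

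Next, with $M_0:=0$, $M_{n+1}:=\max\{M_n,J_{n+1}\}-1$ and $\tau^M:=\inf\{n\ge 1:M_n=0\}$ as in Proposition~\ref{ppppp}, the plan is to prove by induction that $D_n\le M_n$ for all $0\le n\le\tau_1$. The cases $n=0,1$ are immediate. For the inductive step, if $r_{n+1}=s_{n+1}$ then $D_{n+1}=0\le M_{n+1}$; otherwise every path at level $r_n$ moves up by at least $1$ while every path strictly above it stays at a level $\ge r_n+1$, so $r_{n+1}\ge r_n+1$, and every moved path lands at a level $\le r_n+J_{n+1}$ while the others stay $\le s_n$, so $s_{n+1}\le\max\{s_n,\,r_n+J_{n+1}\}$; subtracting gives $D_{n+1}\le\max\{D_n-1,\,J_{n+1}-1\}\le\max\{M_n,J_{n+1}\}-1=M_{n+1}$, using $D_n\le M_n$. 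Since $D_n>0$ for $1\le n<\tau_1$, this forces $M_n>0$ there, whence $\tau_1\le\tau^M$, and Lemma~\ref{Lemma 2.6 of RSS} yields $\P\{\tau_1\ge m\}\le\P\{\tau^M\ge m\}\le C\exp\{-C'm\}$ for some positive constants $C,C'$.

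Finally, since at time $\tau_1$ all three paths share the level $r_{\tau_1}$, one has $T_1^{x,y,z}=r_{\tau_1}=\sum_{n=1}^{\tau_1}(r_n-r_{n-1})\le\sum_{n=1}^{\tau_1}J_n$. The exponential tail bound on $\tau_1$ gives $\E[\mathrm{e}^{\alpha\tau_1}]<\infty$ for some $\alpha>0$, so Lemma~\ref{Lemma 2.7 of RSS} yields $\E[\mathrm{e}^{\gamma T_1^{x,y,z}}]<\infty$ for some $\gamma>0$, and Markov's inequality then gives $\P\{T_1^{x,y,z}\ge m\}\le\E[\mathrm{e}^{\gamma T_1^{x,y,z}}]\,\mathrm{e}^{-\gamma m}$, which is the claimed estimate (with constants independent of $x,y,z$, by translation invariance and since $D_0=0$ regardless of the starting positions). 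The one genuinely delicate point is the first paragraph: verifying that the simultaneous per-step increments of three ordered, non-crossing paths really admit a common i.i.d.\ geometric-type dominating sequence $\{J_n\}$ with $\P\{J_1=1\}>0$ and exponential tails, i.e.\ that the ``fresh half-cone'' device of Proposition~\ref{ppppp} still goes through when two explored regions (rather than one) must be avoided; once that is in place, the remainder is a routine repetition of the two-path argument together with the quoted lemmas.
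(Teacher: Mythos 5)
There is a genuine gap, and it sits exactly at the point you flag as ``delicate'' and then defer: the claim that the three simultaneous increments admit a dominating sequence $\{J_n\}$ of i.i.d.\ geometric-type variables independent of the past. The fresh-ray device of Proposition~\ref{ppppp} works for two paths because the lower path's current position, being an open vertex, lies outside the single explored cone of the other path, so \emph{one} of its two diagonal rays avoids that cone entirely. With three non-crossing paths this fails when the path currently at the lowest level is the \emph{middle} one: its left ray can enter the left path's explored cone and its right ray can enter the right path's explored cone, and in fact the whole cross-section of its search cone at levels strictly below the outer paths' current levels can be covered by the two outer explored cones (e.g.\ if the outer apexes at level $r_n$ are at horizontal distance one on either side of the middle path's position, the union of the two outer cones contains the middle cone at every higher level). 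All such covered sites are necessarily \emph{closed}, so on an event of positive probability the middle path's increment is forced to be at least $s_n-r_n=D_n$, which is unbounded; no sequence $\{J_n\}$ that is i.i.d.\ (hence independent of $\mathscr{F}_{r_n}$) with exponential tails can dominate it, so Lemma~\ref{Lemma 2.6 of RSS} cannot be fed as you propose. What one can honestly extract from freshness above level $s_n$ is only $s_{n+1}\leq s_n+G_{n+1}$ with $G_{n+1}$ geometric, giving $D_{n+1}\leq D_n+G_{n+1}-1$, a recursion with positive drift that does not contract; so the step ``every moved path lands at a level $\leq r_n+J_{n+1}$'' is not a routine extension but the place where the two-path argument genuinely breaks.

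The paper avoids this entirely with a much simpler, direct argument in the time variable: for each $k\in\{x,y,z\}$ it tracks an auxiliary pair $(\bu_{k,n},\mathtt{t}_{k,n})$ (current search centre and search width at level $n$), notes that $T_1^{x,y,z}$ is the first level $n$ at which $\mathtt{t}_{x,n}=\mathtt{t}_{y,n}=\mathtt{t}_{z,n}=1$, and observes that if the three \emph{distinct, never-before-examined} sites $\bu_{x,n}+(-1,1)$, $\bu_{y,n}+(0,1)$, $\bu_{z,n}+(1,1)$ are all open then all three widths reset to $1$ at level $n+1$. Hence $T_1^{x,y,z}$ is dominated by a geometric$(p^3)$ random variable, which gives the exponential tail in one stroke, with no regeneration-step counting, no history regions, and no appeal to Lemmas~\ref{Lemma 2.6 of RSS} or~\ref{Lemma 2.7 of RSS}. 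If you want to salvage your route you would need a new argument controlling the joint increments when the middle path is lowest; the paper's level-by-level construction is the cleaner way out.
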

\begin{proof}
For $ k\in\{x,y,z\} $, taking $ \bu_{k ,0} :=(k,0) $ and $ \mathtt{t}_{k ,0}:=1 $, we define $ (\bu_{k,n},\mathtt{t}_{k ,n}) $ for $ n\geq 1 $ as follows:
\begin{itemize}
\item[(1)] if $ \mathscr{V} \cap \big([ \bu_{k,n-1}(1) - \mathtt{t}_{k ,n-1}, \bu_{k,n-1}(1) + \mathtt{t}_{k ,n-1}] \times \{n\}\big) = \emptyset $ then take  $ \bu_{k,n} = \bu_{k,n-1}+(0,1)  $ and $  \mathtt{t}_{k ,n} = \mathtt{t}_{k ,n-1} +1 $;
\item[(2)] if $ \mathscr{V} \cap\big( [ \bu_{k,n-1} (1)- \mathtt{t}_{k ,n-1}, \bu_{k,n-1}(1) + \mathtt{t}_{k ,n-1}] \times \{n\}\big) \neq \emptyset $ then take $ \bu_{k,n} = \bv $ and $ \mathtt{t}_{k ,n} = 1 $, where $\bv \in \mathscr{V} \cap \big([\bu_{k,n-1}(1) - \mathtt{t}_{k ,n-1}, \bu_{k,n-1}(1) + \mathtt{t}_{k ,n-1}] \times \{n\}\big) $ is such that $U_{\bv} \leq U_{ \bw}$ for all $\bw \in \mathscr{V} \cap \big([ \bu_{k,n-1}(1) - \mathtt{t}_{k ,n-1}, \bu_{k,n-1}(1) + \mathtt{t}_{k ,n-1}] \times \{n\}\big)$.
\end{itemize}
\vspace*{2mm}
We observe that $ T_{1}^{x,y,z}=\inf\{n\geq 1:\mathtt{t}_{x,n}=\mathtt{t}_{y,n}=\mathtt{t}_{z,n}=1\} $. For each $ n\geq 0 $, let $ \tilde{\mathtt{t}}_{x,n+1}:=\mathbf{1}(\{\bu_{x,n} +(-1,1)\in\mathscr{V}\}) $, $ \tilde{\mathtt{t}}_{y,n+1}:=\mathbf{1}(\{\bu_{y,n} +(0,1)\in\mathscr{V}\}) $ and $ \tilde{\mathtt{t}}_{z,n+1}:=\mathbf{1}(\{\bu_{z,n} +(1,1)\in\mathscr{V}\}) $.
Then $ \{\tilde{\mathtt{t}}_{k,n}:k\in\{x,y,z\},n\geq 1\} $ is a collection of i.i.d. Bernoulli $ (p) $ random variables, and so $ B:=\inf\{n\geq 1:\tilde{\mathtt{t}}_{x,n}\tilde{\mathtt{t}}_{y,n}\tilde{\mathtt{t}}_{z,n}=1\} $ has a geometric $ (p^{3}) $ distribution. Noting that $ T_{1}^{x,y,z}\leq B $, the proof is completed.
\end{proof}
 \begin{remark}\label{a-corollary1}
With  $ B $ as in the proof above we have for each $ n,m\geq 1 $,
\begin{align}\label{B-moment}
\E\big[\big(T_{n}^{x,y,z}-T_{n-1}^{x,y,z}\big)^{m} ~\big |~ \mathscr{F}_{T_{n-1}^{x,y,z}} \big]\leq \E[B^{m}]=C_{18}^{(m)},
\end{align}
where $ C_{18}^{(m)} $ is  some positive constant that only depends on $ m $.
 \end{remark}

For any $ n\geq 0 $, let $ Z_{n}^{x,y}:=h_{\tau_{n}^{x,y,z}}((y,0))(1)-h_{\tau_{n}^{x,y,z}}((x,0))(1) $ and $ Z_{n}^{y,z}:=h_{\tau_{n}^{x,y,z}}((z,0))(1)-h_{\tau_{n}^{x,y,z}}((y,0))(1) $, each of which is non-negative because of the non-crossing property of our paths. Hence, $ \{(Z_{n}^{x,y},Z_{n}^{y,z}):n\geq 0\} $ is a Markov chain on the state space $ \mathcal{S}:=\{(u,v)\in\Zz:u, v\geq 0\} $. Taking $ \mathcal{S}_{0}:=\{(u,v)\in\mathcal{S}:uv= 0\} $, we use the following theorem and estimate the expected value of the first regeneration step in which (at least) one pair of three paths $ \pi_{(x,0)} $, $ \pi_{(y,0)} $ and $ \pi_{(z,0)} $ collide, i.e., $ \mathtt{n}_{x,y,z}:=\inf\{n\geq 1:Z_{n}^{x,y}Z_{n}^{y,z}=0\} $.

\begin{theorem}\textup{\textbf{(Theorem 3.1 of \cite{Anish1})}} \label{CAthm}
Let $ \{W_{j}:j\geq 0\} $ be a Markov chain with countable state space $ \mathcal{M} $, and $ M_{0},M_{1} $ are two disjoint subsets of $ \mathcal{M} $. Suppose there exist a function $ V:\mathcal{M}\rightarrow[0,\infty) $ and two constants $ b\geq 0 $ and $ p_{0}>0 $ such that
\begin{itemize}
\item[(i)] $ \E\big[V(W_{1})-V(W_{0})~\big |~ W_{0}=x\big]\leq -1+b\mathbf{1}(x\in M_{1}) $ for all $ x\in M_{0}^{c} $;
\item[(ii)] $ \P\big\{W_{1}\in M_{0} ~\big |~ W_{0}=x\big\}\geq p_{0} $ for all $ x\in M_{1} $.
\end{itemize}
Then taking $ \varrho(M_0):=\inf\{j\geq 1:W_{j}\in M_0\} $, for $ x\in M_{0}^{c} $ we have 
\[\E\big[\varrho(M_{0}) ~\big |~ W_{0}=x\big]\leq V(x)+\dfrac{b}{p_{0}}.\]
\end{theorem}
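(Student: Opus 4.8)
The plan is to run a standard drift (Foster--Lyapunov) argument: I will set up an optional-stopping inequality for a compensated, stopped version of the Lyapunov process $V(W_n)$, and separately control the number of excursions through $M_1$ by a crude geometric bound coming from hypothesis $(ii)$.

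First I would bound the ``bad'' contributions caused by $M_1$. Work conditionally on $W_0 = x$ with $x \in M_0^c$, write $\varrho := \varrho(M_0)$, and let $N := \sum_{j=0}^{\varrho - 1}\mathbf{1}(W_j \in M_1)$ be the number of visits to $M_1$ strictly before $\varrho$, with $\sigma_k$ the time of the $k$-th such visit. On $\{\sigma_k < \infty\}$ we have $W_{\sigma_k} \in M_1$, so by $(ii)$ and the strong Markov property $\P(W_{\sigma_k+1} \in M_0 \mid \mathcal F_{\sigma_k}) \geq p_0$; and if $W_{\sigma_k+1} \in M_0$ then $\varrho = \sigma_k+1$, so no $(k+1)$-th visit to $M_1$ precedes $\varrho$. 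Hence $\{N \geq k+1\} \subseteq \{N \geq k\} \cap \{W_{\sigma_k+1} \notin M_0\}$, giving $\P(N \geq k+1) \leq (1-p_0)\,\P(N \geq k)$, so $\P(N \geq k) \leq (1-p_0)^{k-1}$ for $k \geq 1$ and $\E[N] \leq 1/p_0$.

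Next I would introduce the stopped, compensated process
\[
Z_n := V(W_{n \wedge \varrho}) + (n \wedge \varrho) - b\sum_{j=0}^{(n\wedge\varrho)-1}\mathbf{1}(W_j \in M_1), \qquad n \geq 0,
\]
(empty sum $=0$), and check that $\{Z_n\}_{n\geq0}$ is a supermartingale for the natural filtration. On $\{\varrho > n\}$ we have $W_n \in M_0^c$, so hypothesis $(i)$ gives $\E[V(W_{n+1}) - V(W_n)\mid\mathcal F_n] \leq -1 + b\mathbf{1}(W_n \in M_1)$; adding the increment $+1$ of $n\wedge\varrho$ and subtracting $b\mathbf{1}(W_n \in M_1)$ yields $\E[Z_{n+1}-Z_n\mid\mathcal F_n] \leq 0$, while on $\{\varrho \leq n\}$ the process is frozen. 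Integrability of each $Z_n$ follows by induction on $n$: on $\{\varrho > n\}$ hypothesis $(i)$ also gives $\E[V(W_{n+1})\mid\mathcal F_n]\leq V(W_n)+b$, whence $\E[V(W_{(n+1)\wedge\varrho})]\leq\E[V(W_{n\wedge\varrho})]+b<\infty$, and the other two terms of $Z_n$ lie between $-bn$ and $n$.

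Finally I would combine the two ingredients. The supermartingale property gives $\E[Z_n \mid W_0 = x] \leq Z_0 = V(x)$ for all $n$. Since $V \geq 0$ and $\sum_{j=0}^{(n\wedge\varrho)-1}\mathbf{1}(W_j\in M_1) \leq N$, this rearranges to $\E[n\wedge\varrho \mid W_0 = x] \leq V(x) + b\,\E[N \mid W_0 = x] \leq V(x) + b/p_0$; letting $n \to \infty$ and applying monotone convergence yields $\E[\varrho(M_0)\mid W_0 = x] \leq V(x) + b/p_0$ (in particular $\varrho(M_0) < \infty$ almost surely), which is the claim. I expect the only genuine subtlety to be the bookkeeping for visits to $M_1$ --- extracting the geometric tail of $N$ from the one-step lower bound $p_0$ via the strong Markov property --- together with the mild care needed to justify the supermartingale manipulations when $V$ and the jumps of $\{W_n\}$ are unbounded; the drift computation itself is routine.
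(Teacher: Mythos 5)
Your proof is correct. Note that the paper itself does not prove this statement at all: it is imported verbatim as Theorem 3.1 of \cite{Anish1}, so there is no in-paper argument to compare against. Your route --- the supermartingale $V(W_{n\wedge\varrho})+(n\wedge\varrho)-b\sum_{j<n\wedge\varrho}\mathbf{1}(W_j\in M_1)$ combined with the geometric bound $\P(N\geq k)\leq(1-p_0)^{k-1}$ on the number of visits to $M_1$ before $\varrho$, followed by optional stopping and monotone convergence --- is the standard Foster--Lyapunov argument for such bounds, and your handling of the two delicate points (integrability of the compensated process by induction, and the strong Markov step giving $\E[N]\leq 1/p_0$) is sound.
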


\begin{theorem}\label{a-B1-thm}
There exist some positive constants $ C_{19} $ and $ C_{20} $ such that
\[\E[\mathtt{n}_{x,y,z}]\leq C_{19}+C_{20}(y-x)(z-y).\]
\end{theorem}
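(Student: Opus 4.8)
The plan is to apply the Foster--Lyapunov-type criterion of Theorem \ref{CAthm} to the Markov chain $W_j := (Z_j^{x,y}, Z_j^{y,z})$ on $\mathcal{M} := \mathcal{S} = \{(u,v) \in \Zz : u,v \geq 0\}$, with $M_0 := \mathcal{S}_0 = \{(u,v) \in \mathcal{S} : uv = 0\}$ and $M_1$ a suitable ``near the boundary'' strip such as $M_1 := \{(u,v) \in \mathcal{S} : 0 < \min\{u,v\} \leq R\}$ for a constant $R$ to be chosen. Then $\mathtt{n}_{x,y,z} = \varrho(M_0)$ with $W_0 = (y-x, z-y)$, and the conclusion $\E[\mathtt{n}_{x,y,z}] \leq V(y-x,z-y) + b/p_0$ will follow once we exhibit a Lyapunov function $V$ with $V(u,v) = O(uv)$ near the diagonal. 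The natural candidate is $V(u,v) := c\, u v$ for a constant $c > 0$ to be fixed, possibly with a correction term near $M_1$; this immediately gives the desired bound $C_{19} + C_{20}(y-x)(z-y)$.

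First I would record the relevant one-step properties of the pair process between consecutive triple-regeneration times $\tau^{x,y,z}_n$. By the martingale property established in Theorem \ref{rmart} applied to each pair, both $\{Z_n^{x,y}\}$ and $\{Z_n^{y,z}\}$ are nonnegative martingales with respect to $\{\mathscr{F}_{T^{x,y,z}_n}\}$; moreover, by an argument as in Proposition \ref{bound on nu,v}, conditionally on $\mathscr{F}_{T^{x,y,z}_n}$ and on $\{Z_n^{x,y} > 0\}$ the increment $Z_{n+1}^{x,y} - Z_n^{x,y}$ has conditional second moment bounded below by a positive constant $c_1$ (there is a positive chance that the three paths move as $(-1,1)$, $(0,1)$, $(1,1)$ at the next step), and all conditional moments of $|Z_{n+1}^{x,y} - Z_n^{x,y}|$ up to order, say, $4$ are bounded above by constants, uniformly, using the domination $|h_{i}(\bw)(1) - h_{i-1}(\bw)(1)| \leq h_i(\bw)(2) - h_{i-1}(\bw)(2)$ together with Lemma \ref{Lemma 2.7 of RSS} and Remark \ref{a-corollary1}. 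The key computation is then the one-step drift of $V(u,v) = cuv$: expanding,
\begin{align*}
\E[V(W_{n+1}) - V(W_n) \mid W_n = (u,v)]
&= c\,\E[Z_{n+1}^{x,y} Z_{n+1}^{y,z} - uv \mid W_n = (u,v)]\\
&= c\,\E[(Z_{n+1}^{x,y} - u)(Z_{n+1}^{y,z} - v) \mid W_n = (u,v)],
\end{align*}
where the cross terms vanish because each coordinate is a conditional martingale increment. So the drift equals $c$ times the conditional covariance of the two pair-increments, and the whole question reduces to showing that this covariance is bounded above by a strictly negative constant whenever $\min\{u,v\}$ is large enough.

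The main obstacle is precisely establishing the negative covariance bound $\E[(Z_{n+1}^{x,y} - u)(Z_{n+1}^{y,z} - v) \mid W_n = (u,v)] \leq -\kappa < 0$ for $\min\{u,v\}$ large. Intuitively, when all three gaps are large the three exploration regions are disjoint with high probability, so the increments of the outer gaps $Z^{x,y}$ and $Z^{y,z}$ should be driven by nearly-independent randomness in the left pair versus the right pair --- except that both share the middle path $\pi_{(y,0)}$, whose horizontal displacement appears with a $+1$ in $Z^{x,y}$ and a $-1$ in $Z^{y,z}$, producing a negative contribution equal to minus the variance of the middle path's increment over one regeneration block, which is a fixed positive constant. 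I would make this precise by the same coupling technique used in the proof of Theorem \ref{tree}(i) for dimension 3 and in Proposition \ref{prop4.3}: replace the true triple-path configuration by one in which the three paths use independent uniform fields inside disjoint cones of radius $\sim \min\{u,v\}/3$, so that on a high-probability event (failure probability exponentially small in $\min\{u,v\}$, by the exponential tail bound \eqref{c5c6}) the increments decompose as (left randomness) $+$ (middle randomness) for $Z^{x,y}$ and (middle randomness) $-$ (right randomness) for $Z^{y,z}$ with the three pieces independent; the covariance is then exactly $-\V(\text{middle increment over one block}) < 0$ on that event, and the exponentially small error, multiplied by the polynomially bounded moments, is negligible. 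Choosing $R$ large enough that this bound holds for $\min\{u,v\} > R$ gives condition (i) of Theorem \ref{CAthm} with $M_0^c \setminus M_1 = \{\min\{u,v\} > R\}$ after rescaling $c$ so the drift is $\leq -1$ there, and it holds with the extra $+b$ on $M_1$ since the drift of $cuv$ on the bounded-in-one-coordinate strip is at most $c\cdot R\cdot(\text{bounded moment})=: b$; condition (ii), namely $\P\{W_{n+1} \in \mathcal{S}_0 \mid W_n = (u,v)\} \geq p_0$ for $(u,v) \in M_1$, follows because with $\min\{u,v\} \leq R$ bounded there is a uniformly positive chance the corresponding gap closes to $0$ at the next regeneration step. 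Applying Theorem \ref{CAthm} then yields $\E[\mathtt{n}_{x,y,z}] \leq c(y-x)(z-y) + b/p_0$, which is the claim with $C_{19} = b/p_0$ and $C_{20} = c$.
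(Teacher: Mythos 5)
Your proposal is correct and follows essentially the same route as the paper: apply Theorem \ref{CAthm} to the chain $(Z_n^{x,y},Z_n^{y,z})$ with Lyapunov function $V(u,v)\propto uv$, get a strictly negative drift when both gaps are large because the shared middle path makes the covariance of the two gap increments approximately $-\V(\text{middle increment})$ (the paper's lower bound $\E[I_y^{2}]\geq p^{3}(1-p)^{6}$), and handle the near-boundary strip via a uniform one-step absorption probability into $\mathcal{S}_0$. The only difference is technical bookkeeping: the paper decouples the three paths with a fixed threshold $m_0$ and Cauchy--Schwarz on the event $\{T_1^{x,y,z}\geq m_0/2\}$, bounding the cross terms $|\E[I_xI_y]|$, $|\E[I_yI_z]|$, $|\E[I_xI_z]|$ by $a/6$, instead of your coupling on cones of radius proportional to $\min\{u,v\}$ with an exponentially small error.
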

\begin{proof}
For $ k\in\{x,y,z\} $ and $ n\geq 1 $, take $ I_{k}:=h_{\tau_{1}^{x,y,z}}((k,0))(1)-k $ and $ X_{n}^{(k)}:=X_{n}^{(k,0)} $.  Since  $ X_{n}^{(k)} $'s are symmetric and $ I_{k}=\sum_{n=1}^{\tau_{1}^{x,y,z}}X_{n}^{(k)} $, we have $ \E[I_{k}]=0 $. Also by the structure of our graph $ |I_{k}|\leq T_{1}^{x,y,z} $, so from \eqref{B-moment} in Remark \ref{a-corollary1}, $ \E[|I_{k}|^{m}]\leq C_{18}^{(m)} $ for each $ m\geq 1 $. Using Cauchy-Schwarz inequality we have
\begin{align}\label{a-B1-1}
\E\big[Z_{1}^{x,y}Z_{1}^{y,z}-(y-x)(z-y) \big]&
=\E\big[(I_{y}-I_{x})(I_{z}-I_{y}) \big]\\&
\leq 4C_{18}^{(2)}.\nonumber
\end{align}
We also have $ \E[I_{y}^{2}]\geq \P\{I_{y}=1\}\geq p^{3}(1-p)^{6}=:a $. Now choose $ m_{0}\geq 2 $  large enough such that $ \P\{T_{1}^{x,y,z}\geq m_{0}/2\}\leq a^{2}/(36C_{18}^{(4)}) $. 

Let $ \mathcal{S}_{1}:=\{(u,v)\in\mathcal{S}:1\leq u\wedge v\leq m_{0}\} $ and observe that $ (y-x,z-y)\notin\mathcal{S}_0 $ because $ x<y<z $. Suppose $ (y-x,z-y)\notin\mathcal{S}_1 $, then $ \E\big[I_{x}I_{y}\mathbf{1}(\{T_{1}^{x,y,z}\leq m_{0}/2\})\big]=0 $.  In this case, using Cauchy-Schwarz inequality we have
\begin{align*}
|\E[I_{x}I_{y}]|&\leq \big| \E\big[I_{x}I_{y}\mathbf{1}(\{T_{1}^{x,y,z}\leq m_{0}/2\})\big]\big|+ \E\big[|I_{x}||I_{y}|\mathbf{1}(\{T_{1}^{x,y,z}\geq m_{0}/2\})\big]\leq \dfrac{a}{6}. 
\end{align*}
Similarly we have $ |\E[I_{y}I_{z}]|\leq a/6 $ and $ |\E[I_{x}I_{z}]|\leq a/6 $. This together with \eqref{a-B1-1} we have
\begin{align*}
\E\big[Z_{1}^{x,y}Z_{1}^{y,z}-(y-x)(z-y) \big]\mathbf{1}\big(\{(y-x,z-y)\notin\mathcal{S}_{1}\}\big)\leq -\dfrac{a}{2} .
\end{align*}
So, for all $ x,y,z\in\Z $ with $ x<y<z $,  we have
\begin{align*}
&\E\big[Z_{1}^{x,y}Z_{1}^{y,z}-(y-x)(z-y) \big]
\leq -\dfrac{a}{2}+(4C_{18}^{(2)}+\dfrac{a}{2})\mathbf{1}\big(\{(y-x,z-y)\in\mathcal{S}_{1}\}\big).
\end{align*}

On the other hand, if  $ (y-x,z-y)\in\mathcal{S}_{1} $,  assuming, without loss of generality,  $ y-x\leq z-y $, we have
\begin{align*}
&\P\big\{Z_{1}^{x,y}Z_{1}^{y,z}=0 \big\}\\&~
\geq \P\big\{h_{1}((x,0))=h_{1}((y,0))=(x+\lceil\tfrac{y-x}{2}\rceil,\lceil\tfrac{y-x}{2}\rceil),h_{1}((z,0))=(z,\lceil\tfrac{y-x}{2}\rceil) \big\}\\&~
\geq (1-p)^{3(\lceil\tfrac{y-x}{2}\rceil +1)^{2}-5}p^{2}\geq (1-p)^{3(m_{0}+1)^{2}-5}p^{2}.
\end{align*}
Our result now follows from  Theorem \ref{CAthm} by taking  $ \mathcal{M}=\mathcal{S} $, $ M_{0}=\mathcal{S}_{0} $, $ M_{1}=\mathcal{S}_{1} $ and $ V:\mathcal{S}\rightarrow [0,\infty) $ defined by $ V((u,v))=2uv/a $.
\end{proof}

Now we are ready to estimate the random time $ \nu_{x,y,z}:=\inf\{t\geq 1:\pi_{(x,0)}(t)=\pi_{(y,0)}(t) \text{ or }\pi_{(y,0)}(t)=\pi_{(z,0)}(t)\}  $, which is the first collision time of a pair of three paths $ \pi_{(x,0)} $, $ \pi_{(y,0)} $ and $ \pi_{(z,0)} $.

 \begin{theorem}
 There exist  positive constants $ C_{21} $ and $ C_{22} $ such that
\[\E[\nu_{x,y,z}]\leq C_{21}+C_{22}(y-x)(z-y).\]
 \end{theorem}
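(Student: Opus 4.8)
The plan is to bound the continuous first-collision time $\nu_{x,y,z}$ by the elapsed time at the regeneration step $\mathtt{n}_{x,y,z}$, and then to convert the bound on $\E[\mathtt{n}_{x,y,z}]$ obtained in Theorem~\ref{a-B1-thm} into a bound on the elapsed time by a Wald-type argument based on the moment estimate \eqref{B-moment}.

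First I would observe that $\nu_{x,y,z}\le T^{x,y,z}_{\mathtt{n}_{x,y,z}}$ almost surely. Indeed, at the joint regeneration step $\tau^{x,y,z}_{\mathtt{n}_{x,y,z}}$ all three paths sit at height $T^{x,y,z}_{\mathtt{n}_{x,y,z}}$, and since $Z^{x,y}_{\mathtt{n}_{x,y,z}}Z^{y,z}_{\mathtt{n}_{x,y,z}}=0$, at least one of the pairs $\{\pi_{(x,0)},\pi_{(y,0)}\}$, $\{\pi_{(y,0)},\pi_{(z,0)}\}$ occupies a common vertex at that height, so a collision has already occurred by time $T^{x,y,z}_{\mathtt{n}_{x,y,z}}$. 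Hence $\E[\nu_{x,y,z}]\le\E\big[T^{x,y,z}_{\mathtt{n}_{x,y,z}}\big]$.

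Next I would write $T^{x,y,z}_{\mathtt{n}_{x,y,z}}=\sum_{n\ge1}\mathbf{1}(n\le\mathtt{n}_{x,y,z})\,(T^{x,y,z}_n-T^{x,y,z}_{n-1})$ and note that $\mathtt{n}_{x,y,z}$ is a stopping time for the filtration $\{\mathscr{F}_{T^{x,y,z}_n}:n\ge0\}$: at a joint regeneration step the half-space above the current level is entirely unexplored, so $(Z^{x,y}_n,Z^{y,z}_n)$ is $\mathscr{F}_{T^{x,y,z}_n}$-measurable, whence $\{n\le\mathtt{n}_{x,y,z}\}=\{\mathtt{n}_{x,y,z}\le n-1\}^{c}\in\mathscr{F}_{T^{x,y,z}_{n-1}}$. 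Taking expectations, conditioning each summand on $\mathscr{F}_{T^{x,y,z}_{n-1}}$, applying \eqref{B-moment} with $m=1$, and interchanging sum and expectation by Tonelli (all terms being non-negative), one obtains
\[
\E\big[T^{x,y,z}_{\mathtt{n}_{x,y,z}}\big]\le C_{18}^{(1)}\sum_{n\ge1}\P\{n\le\mathtt{n}_{x,y,z}\}=C_{18}^{(1)}\,\E[\mathtt{n}_{x,y,z}].
\]
Combining this with Theorem~\ref{a-B1-thm} then yields the assertion with $C_{21}=C_{18}^{(1)}C_{19}$ and $C_{22}=C_{18}^{(1)}C_{20}$.

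The argument is essentially routine once the pieces are in place; the only point requiring care is the verification that $\mathtt{n}_{x,y,z}$ is a stopping time for $\{\mathscr{F}_{T^{x,y,z}_n}\}$, which relies on the renewal structure developed in Section~\ref{s2} — namely the emptiness of the history region at a regeneration step, so that $Z^{x,y}_n$ and $Z^{y,z}_n$ depend only on the uniform variables at or below height $T^{x,y,z}_n$.
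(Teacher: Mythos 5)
Your proposal is correct and follows essentially the same route as the paper: bound $\nu_{x,y,z}$ by $T^{x,y,z}_{\mathtt{n}_{x,y,z}}$, decompose into increments, and apply the conditional first-moment bound \eqref{B-moment} via a Wald-type argument before invoking Theorem \ref{a-B1-thm}. Your explicit verification that $\{n\le\mathtt{n}_{x,y,z}\}\in\mathscr{F}_{T^{x,y,z}_{n-1}}$ is a point the paper leaves implicit, but the argument is otherwise identical.
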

 \begin{proof}
Taking $ I_{n}^{x,y,z}:=T_{n}^{x,y,z}-T_{n-1}^{x,y,z} $, we have 
$ \nu_{x,y,z}\leq T_{\mathtt{n}_{x,y,z}}^{x,y,z}=\sum_{n=1}^{\mathtt{n}_{x,y,z}}I_{n}^{x,y,z} $.
From \eqref{B-moment} in Remark \ref{a-corollary1},
 \begin{align*}
 \E\Big[\sum_{n=1}^{\mathtt{n}_{x,y,z}}I_{n}^{x,y,z}\Big]&
 =\E\Big[\sum_{n=1}^{\infty}I_{n}^{x,y,z}\mathbf{1}(\{\mathtt{n}_{x,y,z}\geq n\})\Big]\\&
  =\sum_{n=1}^{\infty}\E\Big[\E\big[I_{n}^{x,y,z}\mathbf{1}(\{\mathtt{n}_{x,y,z}\geq n\}) ~\big |~ \mathscr{F}_{T_{n-1}^{x,y,z}}\big]\Big]\\&
    =\sum_{n=1}^{\infty}\E\Big[\mathbf{1}(\{\mathtt{n}_{x,y,z}\geq n\})\E\big[I_{n}^{x,y,z} ~\big |~ \mathscr{F}_{T_{n-1}^{x,y,z}}\big]\Big]
    \leq C_{18}^{(1)}\E[\mathtt{n}_{x,y,z}],
 \end{align*}
which together with Theorem \ref{a-B1-thm} completes the proof.
 \end{proof}
 
 Finally, let $ n\in\N  $ and  $ t,\varepsilon>0 $.  We have
  \begin{align*}
 \sup_{(a,t_{0})\in\mathbb{R}^{2}}\mathbb{P}\big\{\eta_{\bar{\mathcal{X}}_{n}(\sigma ,\gamma)}(t_{0},t;a,a+\varepsilon)\geq 3\big\}&
\leq \mathbb{P}\big\{\eta_{\mathcal{X}}(0 ,n^{2}\gamma t ;0,\lceil n\sigma\varepsilon\rceil +3)\geq 3\big\}\\&
\leq\P\Big\{\bigcup_{i=0}^{\lceil n\sigma\varepsilon\rceil +1}\big\{\nu_{i,i+1,\lceil n\sigma\varepsilon\rceil +3}\geq n^{2}\gamma t\big\}\Big\}\\&~
\leq\sum_{i=0}^{\lceil n\sigma\varepsilon\rceil +1}\dfrac{C_{21}+C_{22}(\lceil n\sigma\varepsilon\rceil +2-i)}{n^{2}\gamma t}\\&~
\leq( n\sigma\varepsilon +3)\dfrac{C_{21}+C_{22}( n\sigma\varepsilon +3)}{n^{2}\gamma t}\rightarrow \dfrac{C_{22}\sigma^{2}\varepsilon^{2}}{\gamma t}
 \end{align*}
as $ n\rightarrow\infty $, and condition $ (\text{B}_{2}) $ is verified.

\section*{Acknowledgements} We thank the referee for his/her suggestions which led to an improvement of the manuscript and filled a lacuna in Section 4.3.
Azadeh Parvaneh thanks University of Isfahan for supporting her as a PhD student. She also thanks the Indian Statistical Institute, Delhi Centre, for the financial support and the hospitality during her visits to the Institute, and Professor Anish Sarkar for many useful discussions.

\end{document}